\documentclass[12pt, a4paper]{amsart}

\usepackage[hmargin=30mm, vmargin=25mm, includefoot, twoside]{geometry}
\usepackage[bookmarksopen=true]{hyperref}

\usepackage{amsfonts,amssymb,verbatim}

\usepackage{latexsym}
\usepackage{mathrsfs}
\usepackage{stmaryrd}
\usepackage{xspace}
\usepackage{enumerate, paralist}
\usepackage{graphicx}
\usepackage[all]{xy}
\usepackage{extarrows}
\usepackage{tabu}
\usepackage{accents}
\usepackage{tensor}
\usepackage{enumitem}

\usepackage[usenames,dvipsnames]{color}

\usepackage{txfonts, pxfonts}

\usepackage{amsmath}

\usepackage[tbtags]{mathtools}

\usepackage{amsthm, thmtools}

\newtheorem{thm}{Theorem}[section]
 \newtheorem{cor}[thm]{Corollary}
 \newtheorem{lem}[thm]{Lemma}
 \newtheorem{prop}[thm]{Proposition}

 \newtheorem{conv}[thm]{Convention}
\newtheorem{introthm}{Theorem}

 \newtheorem{introdefn}[introthm]{Definition}

\numberwithin{equation}{section}

 \theoremstyle{definition}
  \newtheorem{defn}[thm]{Definition}

 \theoremstyle{remark}
 \newtheorem{rem}[thm]{Remark}

\newtheorem*{claim*}{Claim}

\def\NN{\mathbb{N}}
\def\CC{\mathbb{C}}
\def\RR{\mathbb{R}}

\def\B{\mathfrak{B}}

\def\KK{\mathfrak{K}}

\def\H{\mathcal{H}}
\def\K{\mathcal{K}}
\def\N{\mathcal{N}}

\def\Pz{\mathscr{P}_0}

\def\SOT{\mathrm{(SOT)}}
\def\Id{\mathrm{Id}}

\def\Tr{\mathrm{Tr}}

\def\ppg{\mathrm{prop}}
\def\ran{\mathrm{Ran}}

\def\Ad{\mathrm{Ad}}

\def\supp{\mathrm{supp}}
\def\diam{\mathrm{diam}}
\def\d{\mathrm{d}}

\def\RR{\mathrm{RR}}
\def\SP{\mathrm{(SP)}}

\def\gr{\mathrm{graph}}

\begin{document}

\title[Characterisations for coarse embeddability]{Characterisations for coarse embeddability via quasi-locality and Schur multipliers}
\author{Kang Li}
\author{Jiawen Zhang}
\author{Jingming Zhu}

\address[Kang Li]{Centre for Mathematical Sciences, Lund University, Box 118, SE-221 00 Lund, Sweden.}
\email{kang.li@math.lth.se}

\address[Jiawen Zhang]{School of Mathematical Sciences, Fudan University, 220 Handan Road, Shanghai, 200433, China.}
\email{jiawenzhang@fudan.edu.cn}

\address[Jingming Zhu]{College of Data Science, Jiaxing University, 899 Guangqiong Road, Jiaxing, 314000, China.}
\email{jingmingzhu@zjxu.edu.cn}

\thanks{Jiawen Zhang was partly supported by the National Key R{\&}D Program of China 2022YFA100700 and NSFC 12422107.}

\thanks{Keywords: coarse embeddability, generalised expanders, quasi-local operators, Schur multipliers, ghost operators.}

\thanks{2020 Mathematics Subject Classification: Primary 46L80; Secondary 46L05, 05C48, 51F30}

\begin{abstract}
We establish three new characterisations of coarse embeddability into Hilbert space for discrete metric spaces of bounded geometry in terms of escaping quasi-local couplings, Schur multipliers, and operator norm state localisation. As applications, we prove that if a sparse metric space of bounded geometry is coarsely embeddable into Hilbert space, then every ghost projection in the quasi-local algebra is compact, while we also provide a counterexample to the converse implication. Finally, we characterise coarse embeddability into Hilbert space via the relative Haagerup property for the quasi-local algebra, and investigate its relation to coarse embeddability into $\ell^p$ for $p>2$.
\end{abstract}

\date{\today}

\maketitle

\section{Introduction}\label{sec:intro}

The notion of coarse embedding was introduced by Gromov in \cite{Gro93}, providing a natural way to compare the coarse geometry of metric spaces. It can be viewed as a coarse analogue of isometric embedding, allowing controlled distortion of metrics. A map $f: (X,d_X) \to (Y,d_Y)$ between metric spaces is called a \emph{coarse embedding} if there exist non-decreasing functions $\rho_-, \rho_+: [0,+\infty) \to [0,+\infty)$ with $\lim\limits_{t\to +\infty} \rho_{\pm}(t) = +\infty$ such that
\[
\rho_-(d_X(x,y)) \leq d_Y(f(x), f(y)) \leq \rho_+(d_X(x,y)) \quad \text{for any} \quad x,y\in X.
\]
Among various choices of target spaces, Hilbert space is of particular interest. Indeed, Gromov suggested that a finitely generated group admitting a coarse embedding into Hilbert space should satisfy the Novikov conjecture \cite{Gro93}. This connection illustrates the significance of coarse embeddability beyond metric geometry, particularly in topology, geometry and operator algebras. A major breakthrough in this direction was subsequently achieved by Yu in \cite{Yu00}, where he showed that the coarse Baum-Connes conjecture holds for metric spaces of bounded geometry which are coarsely embeddable into Hilbert space.

The coarse Baum-Connes conjecture is a central problem in higher index theory \cite{HR95}, having significant applications in topology and geometry (see, \emph{e.g.}, \cite{Roe93, Roe96}). It asserts that the \(K\)-theory of Roe algebras can be computed in geometric terms. Roe algebras are \(C^*\)-algebras introduced by J. Roe in his study of elliptic differential operators on open manifolds \cite{Roe88}, where he showed that their \(K\)-theory serves as a receptacle for higher indices. The quasi-local algebra also plays an important role in higher index theory. Indeed, Engel showed in \cite{Eng14} (see also \cite{Eng18}) that the higher indices of elliptic pseudodifferential operators on open manifolds naturally take values in the \(K\)-theory of quasi-local algebras.
%However, it remains a long-standing open question whether the Roe and quasi-local algebras have the same \(K\)-theory.

%Nevertheless, the relationship between Roe algebras and quasi-local algebras, particularly at the level of \(K\)-theory, remains far from being fully understood.

However, the coarse Baum-Connes conjecture does not hold in general. The first counterexample was discovered in \cite{HLS02} using expander graphs, where non-compact ghost projections play a crucial role. Roughly speaking, an operator is called a ghost (due to Yu) if its matrix coefficients vanish at infinity. For suitable expander graphs, non-compact ghost  projections in the Roe algebra are the main obstruction to the coarse Baum-Connes conjecture.
%give rise to \(K\)-theory classes which do not lie in the image of the coarse assembly map, thereby violating the surjectivity.

%The preceding counterexamples suggest a close connection between coarse embeddability and ghost operators. Indeed, for sparse metric spaces admitting a coarse embedding into Hilbert space, it is known that every ghost projection in the Roe algebra is compact, as a consequence of \(K\)-theoretic results. It is natural to ask whether this conclusion extends to the larger quasi-local algebra. However, since the relationship between the \(K\)-theories of Roe and quasi-local algebras remains unclear in general, the existing \(K\)-theoretic arguments do not directly apply to this setting. This motivates us to seek a direct operator-theoretic approach to ghost compactness, and more generally, to investigate whether the behaviour of ghost operators can be used to characterise coarse embeddability itself.

This leads naturally to the relationship between coarse embeddability and ghost operators. For sparse\footnote{Recall that a metric space is \emph{sparse} if it can be written as a coarse disjoint union of a sequence of finite subspaces.} metric spaces which coarsely embed into Hilbert space, it is known that every ghost projection in the Roe algebra is compact, based on $K$-theoretic results \cite{Fin14,GLW26,WFZ25}. It is natural to ask whether this can be extended to ghost projections in the quasi-local algebra. Since the relationship between the \(K\)-theories of Roe and quasi-local algebras remains unclear in general (see \cite{LZZ26}), the existing \(K\)-theoretic arguments do not directly apply. This motivates us to seek an operator-theoretic approach and further investigate whether the behaviour of ghost operators can characterise coarse embeddability itself.

In this paper, we establish three new characterisations of coarse embeddability into Hilbert space and provide several  applications, especially to ghost operators. In the following, we will explain them in turn.

To fix the notation, let \(X\) be a discrete metric space of bounded geometry. We write \(C_u^*(X)\) and \(C_{uq}^*(X)\) for its uniform Roe algebra and uniform quasi-local algebra, respectively, and \(\KK(\ell^2(X))\) for the algebra of compact operators. The precise definitions are recalled in Section~\ref{sec:pre}.

\subsection{Escaping quasi-local couplings and ghost projections}

Our first characterisation is inspired by the notion of generalised expanders from \cite{Tes09} and the work on quasi-locality for ghost projections \cite{KLVZ21,LSZ23}. Replacing the Poincar\'{e} inequality therein by a spatial version of quasi-locality (see Definition~\ref{defn:ppg and quasi-local} for the classical operator version of quasi-locality), which is formally easier to verify, we introduce:

\begin{introdefn}\label{defn:escaping quasi-local coupling}
Let $(X,d)$ be a discrete metric space of bounded geometry. An \emph{escaping quasi-local coupling} on $X$ is a sequence of symmetric finitely supported probability measures $\{\nu_n\}_n$ on $X\times X$ satisfying:
\begin{description}[
  font=\normalfont\bfseries,
  leftmargin=3em,
  labelsep=0.5em,
  style=multiline
]
\item[\textbf{(E)}] For any $S>0$, $\nu_n\bigl(\{(x,y)\in X^2:d(x,y)\le S\}\bigr)\to 0$;
\item[\textbf{(Q)}] For any $\varepsilon>0$, there is $R>0$ such that $\nu_n(A\times B)<\varepsilon$ for any $n$ and $A,B \subseteq X$ with $d(A,B)>R$.
\end{description}
\end{introdefn}

Here \textbf{(E)} stands for ``escaping'', which means that the mass of the measures \(\{\nu_n\}_n\) on each entourage tends to zero. On the other hand, \textbf{(Q)} stands for ``quasi-locality'', which indicates a spatial version of quasi-locality.

We show that the existence of such couplings precisely characterises the failure of coarse embeddability into Hilbert space:

\begin{introthm}\label{thm:esc and ql coupling}
A discrete metric space $(X,d)$ of bounded geometry can be coarsely embedded into Hilbert space \emph{if and only if} $X$ admits no escaping quasi-local coupling.
\end{introthm}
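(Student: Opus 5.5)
We prove the two implications separately. The direction ``no escaping quasi-local coupling $\Rightarrow$ coarse embeddability'' is handled through Tessera's characterisation \cite{Tes09}. The converse is a direct argument with positive definite kernels, which amounts to showing that \textbf{(Q)} forces a uniform Poincaré-type inequality.

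\emph{Failure of coarse embeddability gives a coupling.} Suppose $X$ does not coarsely embed into Hilbert space. By \cite{Tes09}, $X$ then admits a generalised expander. This is a sequence of finitely supported probability measures $\mu_n$ on $X\times X$ with the following two properties. First, $\mu_n$ is supported on pairs $(x,y)$ with $d(x,y)\ge r_n\to\infty$. Second, there is a uniform constant $C$ with
\[
\int \|h(x)-h(y)\|^2\,d\mu_n(x,y)\le C
\]
for every $n$ and every $1$-Lipschitz map $h:X\to\H$. Set $\nu_n=\tfrac12(\mu_n+\mu_n^{T})$, where $\mu_n^T$ is the image of $\mu_n$ under the flip $(x,y)\mapsto(y,x)$. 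The measures $\nu_n$ are symmetric and inherit both properties. Property \textbf{(E)} holds because $\nu_n(\{d\le S\})=0$ once $r_n>S$.

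For \textbf{(Q)}, let $A,B\subseteq X$ with $d(A,B)>R$, and test the Poincaré inequality on the real-valued $1$-Lipschitz function $h(x)=\min\{d(x,A),R\}$. Then $|h(x)-h(y)|=R$ on $A\times B$, so $R^2\nu_n(A\times B)\le C$. Hence $R=\sqrt{C/\varepsilon}$ works in \textbf{(Q)}. If Tessera's statement only provides the inequality with a right-hand side involving local gradients, the same test function still works, because its gradient is bounded by $1$.

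\emph{A coarse embedding rules out couplings.} Let $f:X\to\H$ be a coarse embedding with control functions $\rho_\pm$, and suppose, for contradiction, that $\{\nu_n\}$ is an escaping quasi-local coupling. By Schoenberg's theorem, $k_t(x,y)=e^{-t\|f(x)-f(y)\|^2}$ is a positive definite kernel. So $k_t(x,y)=\langle\xi_x,\xi_y\rangle$ for unit vectors $\xi_x$, and
\[
\tfrac12\|\xi_x-\xi_y\|^2=1-k_t(x,y).
\]

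\emph{Upper bound on $\int k_t\,d\nu_n$.} For any $S$,
\[
\int k_t\,d\nu_n\le \nu_n(\{d\le S\})+e^{-t\rho_-(S)^2}.
\]
Choosing $S$ large and then $n$ large (using \textbf{(E)}) makes $\int k_t\,d\nu_n$ small. Equivalently, $\int\|\xi_x-\xi_y\|^2\,d\nu_n$ is close to $2$.

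\emph{Upper bound on $\int\|\xi_x-\xi_y\|^2\,d\nu_n$.} We show that \textbf{(Q)} forces this integral to be small, provided $\xi$ varies slowly at scale $R$.

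We reduce to real-valued functions by Gaussian projections. Let $\omega$ be a standard Gaussian vector in $\H$ (restricted to the finite-dimensional span of the finitely many $\xi_x$ in the support of $\nu_n$), and set $g_\omega(x)=\langle\xi_x,\omega\rangle$. Then
\[
\mathbb{E}_\omega|g_\omega(x)-g_\omega(y)|^2=\|\xi_x-\xi_y\|^2.
\]
Truncate $g_\omega$ to $[-M,M]$; since $\|\xi_x\|=1$ and the tail of a standard Gaussian is uniform in $x$, the error is small for $M$ large.

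For a real function $g:X\to[-M,M]$ with $|g(x)-g(y)|\le\delta$ whenever $d(x,y)\le R$, slice the range into $K\approx 2M/\delta$ intervals $I_k$. Every pair with $|g(x)-g(y)|>3\delta$ lies in some $A_k\times B_k$ (or its flip), where $A_k=g^{-1}(I_k)$, $B_k=\{g\ge \sup I_k+\delta\}$, and $d(A_k,B_k)>R$. Therefore \textbf{(Q)} gives
\[
\nu_n(\{|g(x)-g(y)|>3\delta\})\le 2K\varepsilon,
\]
and hence
\[
\int|g(x)-g(y)|^2\,d\nu_n\le 9\delta^2+8M^2K\varepsilon.
\]

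\emph{Order of choices.} This is the main obstacle, because the smoothness scale $R$ depends on $\varepsilon$, which in turn must beat the number of levels $K$. The circularity is broken as follows.
\begin{enumerate}
\item Fix a target $\eta>0$.
\item Choose $M$ so that the Gaussian truncation error is at most $\eta$.
\item Choose $\delta$ with $9\delta^2<\eta$, and set $K=2M/\delta$.
\item Choose $\varepsilon$ with $8M^2K\varepsilon<\eta$.
\item Obtain $R$ from \textbf{(Q)} for this $\varepsilon$.
\item Choose $t$ so small that $\|\xi_x-\xi_y\|^2\le 2t\rho_+(R)^2$ makes each $g_\omega$ $\delta$-slowly varying at scale $R$, outside an event of small Gaussian measure. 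This uses $\|\xi_x-\xi_y\|^2=2(1-k_t(x,y))\le 2t\|f(x)-f(y)\|^2$.
\item Choose $S$ and then $n$ as in the upper bound on $\int k_t\,d\nu_n$.
\end{enumerate}
With these choices, averaging over $\omega$ gives $\int\|\xi_x-\xi_y\|^2\,d\nu_n\lesssim\eta$, while the first bound gives a value near $2$. This is a contradiction for small $\eta$.

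The delicate point in step 6 is that a Gaussian projection of a uniformly small increment is only small with high probability, not surely. We will handle this with a Chebyshev bound on the pairs within distance $R$: by bounded geometry, each point has only boundedly many such partners. If that becomes messy, the fallback is to replace Gaussian projections with a discretisation of the unit sphere along finitely many coordinates of the finite-dimensional span.
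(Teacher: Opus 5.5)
Your first direction is the paper's argument: Proposition~\ref{prop:tessera-general} plus Lemma~\ref{lem:generalised expander implies coupling}, where the support condition gives \textbf{(E)} and the test function $\min\{d(z,A),R\}$ gives \textbf{(Q)}. Symmetrising is harmless. For the converse you take a genuinely different route. It works, but step 6 as stated is not available. Making $g_\omega$ $\delta$-slowly varying on \emph{all} $R$-close pairs of the support $Y_n$ of the marginal $m_n(x)=\sum_y\nu_n(x,y)$, outside a small event, would need a union bound over roughly $N_X(R)|Y_n|$ pairs. But $|Y_n|\to\infty$ (condition \textbf{(Q)} forces this), while $t$ is fixed before $n$.

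The Chebyshev idea you mention does repair this, provided you weight by $m_n$ and discard bad points from the slices. Let $\mathrm{Bad}_\omega$ be the set of $x\in Y_n$ having some $y\in Y_n$ with $d(x,y)\le R$ and $|\tilde g_\omega(x)-\tilde g_\omega(y)|>\delta$, where $\tilde g_\omega$ is the truncation. Chebyshev and bounded geometry give $\mathbb{E}_\omega\, m_n(\mathrm{Bad}_\omega)\le 2N_X(R)\,t\rho_+(R)^2/\delta^2$. Put $A_k:=\tilde g_\omega^{-1}(I_k)\setminus\mathrm{Bad}_\omega$ and $B_k:=\{\tilde g_\omega>\sup I_k+\delta\}\setminus\mathrm{Bad}_\omega$. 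The strict inequality matters: with your $\ge$, the conclusion $d(A_k,B_k)>R$ can fail. Now $d(A_k,B_k)>R$, and every pair with increment $>3\delta$ lies in some $A_k\times B_k$, in its flip, or touches a bad point. Hence
\[
\nu_n\bigl(\{|\tilde g_\omega(x)-\tilde g_\omega(y)|>3\delta\}\bigr)\le 2K\varepsilon+2m_n(\mathrm{Bad}_\omega).
\]
This adds $16M^2N_X(R)\,t\rho_+(R)^2/\delta^2$ to your bound. That term is killed by choosing $t$ after $M,\delta,R$, exactly as in your order of choices. Your fallback is not needed, and as sketched would not help. Projections onto unit vectors are surely slowly varying, but they recover $\|\xi_x-\xi_y\|^2$ only up to a factor equal to the dimension of the span, which is unbounded in $n$.

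With this repair, your argument is a randomised version of the paper's. Gaussian directions with a finite grid of thresholds produce random cuts, and $\mathrm{Bad}_\omega$ plays the role of the $R$-boundaries $E_j$. The paper instead writes $q_t(\|F(x)-F(y)\|)$ \emph{exactly} as a convex combination of crossing indicators $\chi_{D_{C_j}}$ (Lemma~\ref{lem:cuts}). It does this via the layer-cake formula over all directions and thresholds, followed by a Bernoulli product. So it needs no truncation, no slicing and no probabilistic bad set. Each cut invokes \textbf{(Q)} once, giving a loss of order $\alpha(R)$ independent of any number of slices, and the boundaries have averaged weight at most $p_t(R)$ deterministically.

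More importantly, the paper packages this as a matrix-coefficient estimate for arbitrary operators (Proposition~\ref{lem:ql-approx}). That estimate does not use positivity and is reused for Theorems~\ref{thm:char by multiplier} and~\ref{thm:CE and ONL}. Your slicing argument is tied to positive measures, since bad pairs are charged to the marginal $m_n$. In exchange, it is more elementary: it avoids the $\ell^1$-representation of finite subsets of Hilbert space.
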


As an application of Theorem~\ref{thm:esc and ql coupling}, we generalise the known compactness result for ghost projections in Roe algebras to the quasi-local setting (see Theorem~\ref{prop:ghost proj are compact}).

\begin{introthm}\label{introcor:ghost proj are compact}
Let $(X,d)$ be a sparse discrete metric space of bounded geometry which is coarsely embeddable into Hilbert space. Then for every ghost projection \(P\in\B(\ell^2(X))\),
\[
\operatorname{dist}\big(P,C_{uq}^*(X)) = \operatorname{dist}\big(P,C_u^*(X)\big)
=
\begin{cases}
0,& \text{if }P\text{ is compact};\\
\frac12,&\text{if }P\text{ is non-compact}.
\end{cases}
\]
In particular, any ghost projection in $C^*_{uq}(X)$ is compact.
\end{introthm}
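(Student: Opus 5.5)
Since $C_u^*(X)\subseteq C_{uq}^*(X)$, we always have $\operatorname{dist}(P,C_{uq}^*(X))\le \operatorname{dist}(P,C_u^*(X))$. If $P$ is compact, both distances vanish because $\KK(\ell^2(X))\subseteq C_u^*(X)$. For a non-compact ghost projection, the upper bound $\operatorname{dist}(P,C_u^*(X))\le \frac12$ comes from the scalar multiple $\frac12\Id\in C_u^*(X)$: since $P$ is a projection, $\|P-\frac12\Id\|=\frac12$. So the whole content is the lower bound $\operatorname{dist}(P,C_{uq}^*(X))\ge\frac12$ when $P$ is a non-compact ghost projection. The final sentence then follows immediately, since a ghost projection in $C^*_{uq}(X)$ has distance $0$.

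I would argue by contradiction using Theorem~\ref{thm:esc and ql coupling}. Suppose $T\in C_{uq}^*(X)$ satisfies $\|P-T\|<\frac12-\delta$. Write $X=\bigsqcup_n X_n$ as a coarse disjoint union of finite pieces. Because $P$ is a non-compact ghost, I expect to extract a sequence of unit vectors $\xi_n\in\ran(P)$ with the following properties: $\xi_n$ is (approximately) supported in a finite union of the $X_k$'s, it moves off to infinity, and its ``mass is spread out''. The last property means that $\|\chi_F\xi_n\|\to0$ for every set $F$ of uniformly bounded diameter, which is where the ghost property enters. A cleaner variant is to first cut $P$ down by the block-diagonal structure; quasi-locality of $T$ makes block-cutting cheap on the $T$ side.

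From $\xi_n$ I would define symmetric probability measures $\nu_n$ on $X\times X$, for instance
\[
\nu_n(x,y)=\tfrac12\bigl(|\xi_n(x)|^2\mu_n(y)+\mu_n(x)|\xi_n(y)|^2\bigr),
\]
or better a measure built from the matrix coefficients of $P$, such as $\nu_n(x,y)\propto |\langle P\delta_y,\delta_x\rangle|^2$ restricted to a finite block. Condition \textbf{(E)} should follow from the ghost property: near-diagonal mass is controlled by sums of small matrix coefficients, and bounded geometry bounds the number of terms. Condition \textbf{(Q)} is where $T$ is used. For sets $A,B$ with $d(A,B)>R$, quasi-locality gives $\|\chi_A T\chi_B\|$ small. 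The estimate $\|P-T\|<\frac12-\delta$ should then force $\|\chi_A P\chi_B\|$, suitably measured, to be less than $\frac12$ by a definite amount. The constant $\frac12$ is essential here. For a projection $P$ with $\|\chi_A P\chi_B\|$ close to $\frac12$ one can see obstructions, since $\frac12$ is the maximal off-diagonal compression of a projection between complementary pieces. So the gap $\delta$ should convert into smallness of $\nu_n(A\times B)$ uniformly in $n$. This would produce an escaping quasi-local coupling, contradicting coarse embeddability via Theorem~\ref{thm:esc and ql coupling}.

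The main obstacle is this last conversion: choosing $\nu_n$ so that a perturbation of size $<\frac12$ (not just small) in operator norm still yields the uniform estimate \textbf{(Q)}. What I would try first is to use the Hilbert–Schmidt normalised measures $\nu_n(x,y)=|P_n(x,y)|^2/\Tr(P_n)$, where $P_n$ is a finite-rank subprojection or block of $P$. I would then exploit that for a projection, $\chi_AP\chi_B$ large forces $\chi_A P\chi_A$ and $\chi_BP\chi_B$ to have spectrum near $\frac12$. At the same time, $T$ approximately commutes with $\chi_A$ by quasi-locality, which should pin $\chi_A$ away from $\frac12$-mixing. Sparseness is needed to make the $P_n$ finite-rank and to pass to blocks. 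The ghost property gives \textbf{(E)} because diagonal mass $\sum_x|P_n(x,x')|^2$ over bounded neighbourhoods is small relative to $\Tr(P_n)$ once the blocks are far out.
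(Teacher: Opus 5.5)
Your preliminary reductions are correct: the inequality $\operatorname{dist}(P,C^*_{uq}(X))\le\operatorname{dist}(P,C^*_u(X))$, the compact case, and the upper bound $\|P-\frac12\Id\|=\frac12$ with $\Id\in C^*_u(X)$ all hold. Your guessed measure $|(P_n)_{xy}|^2/\Tr(P_n)$ is also the right kind of object. But the step you call the main obstacle is a genuine gap, and the mechanisms you suggest for it do not work.

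Condition \textbf{(Q)} needs $\nu_n(A\times B)\to0$ uniformly as $d(A,B)\to\infty$. A measure built from the coefficients of $P$ would therefore need $\|\chi_AP\chi_B\|\to0$. But $P$ is an arbitrary ghost projection in $\B(\ell^2(X))$, and nothing makes it quasi-local. The only estimate you get from $\|P-T\|<\frac12-\delta$ is $\|\chi_AP\chi_B\|\le\|\chi_AT\chi_B\|+\frac12-\delta$. This is empty, since every projection already satisfies $\|\chi_AP\chi_B\|\le\frac12$ for disjoint $A,B$. Quasi-locality also does not make $T$ approximately commute with $\chi_A$: only far-apart corners $\chi_AT\chi_B$ are small, not the part straddling the boundary of $A$. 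Finally, block-cutting $P$ is not cheap, because the off-block part of $P$ need not be compact.

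The missing idea is to trade $P$ for a genuinely quasi-local projection before building any measure.

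\textbf{The projection $Q$.} Replace $T$ by $\frac12(T+T^*)$, which is no farther from $P$. Then $\|P-T\|=\delta<\frac12$ forces $\sigma(T)\subseteq[-\delta,\delta]\cup[1-\delta,1+\delta]$. So $Q:=\chi_{(1/2,\infty)}(T)$ is a projection in $C^*_{uq}(X)$ by continuous functional calculus, with $\|P-Q\|\le2\delta<1$.

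\textbf{$Q$ is a ghost.} Since $P$ is a ghost, $\|P\chi_{B(x,R)}\|\to0$ as $x\to\infty$. Quasi-locality of $Q$ then gives $\|PQ\delta_x\|\to0$. The inequality $\|Q\delta_x\|\le\|P-Q\|\,\|Q\delta_x\|+\|PQ\delta_x\|$ yields $\|Q\delta_x\|\to0$.

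\textbf{Reduction.} Since $\|P-Q\|<1$ forces $\operatorname{rank}P=\operatorname{rank}Q$, it suffices to show that a ghost projection $Q\in C^*_{uq}(X)$ has finite rank.

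\textbf{Where your measure belongs.} Apply it to $Q$, not $P$. On a sparse space, the off-block part of the quasi-local $Q$ is compact. The blocks $\chi_{X_n}Q\chi_{X_n}$ are therefore asymptotically idempotent. Their spectral projections $Q_n$ form a block-diagonal ghost projection $\bigoplus_nQ_n$ that differs from $Q$ by a compact operator, so it is still quasi-local. Set $\nu_n(x,y):=|(Q_n)_{xy}|^2/\Tr(Q_n)$. The inequalities $\|S\|_2^2\le\|S\|\,\|S\|_1$ and $\|\chi_AQ_n\chi_B\|_1\le\Tr(Q_n)$ give $\nu_n(A\times B)\le\|\chi_AQ_n\chi_B\|$. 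This is \textbf{(Q)}, uniformly in $n$. Condition \textbf{(E)} follows from $\nu_n(E_S)\le N_X(S)\sup_x(Q_n)_{xx}$ together with ghostness.

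Infinitely many non-zero $Q_n$ would thus give an escaping quasi-local coupling, contradicting Theorem~\ref{thm:esc and ql coupling}.
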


The same conclusion holds for quasi-local algebras with coefficients in any separable Hilbert space. Our proof does not rely on \(K\)-theoretic arguments, but instead shows that a non-compact ghost projection would give rise to an escaping quasi-local coupling, contradicting Theorem~\ref{thm:esc and ql coupling}.

We show that the converse of Theorem~\ref{introcor:ghost proj are compact} does not hold in general. More precisely, using a construction of Delabie and Khukhro from \cite{DK18}, we obtain a sparse metric space of bounded geometry which does not coarsely embed into Hilbert space, while every quasi-local ghost projection is compact (see Proposition~\ref{prop:ghost proj are compact converse}).

\subsection{Schur multipliers and ghost operators}

As seen above, for sparse metric spaces, the compactness of all ghost \emph{projections} in the quasi-local algebra is a necessary but not sufficient condition for coarse embeddability into Hilbert space. On the other hand, Roe and Willett showed in \cite{RW14} that every ghost \emph{operator} in the uniform Roe algebra is compact if and only if the underlying space has Property A from \cite{Yu00}. Property A is strictly stronger than coarse embeddability, as shown in \cite{AGS12}. These results motivate us to seek an appropriate class of ghost operators whose compactness characterises coarse embeddability into Hilbert space.

Our second characterisation provides such a condition using Schur multipliers. For a set $X$, recall that a Schur multiplier $M_\varphi$ acts on $\B(\ell^2(X))$ by multiplying the matrix coefficients of operators by a kernel $\varphi: X \times X \to \CC$, called a Schur multiplier kernel. For a metric space \((X,d)\), denote by $\Pz(X)$ the set of all Schur multiplier kernels which are normalised positive-definite and $\lim\limits_{R\to\infty}\sup_{d(x,y)>R}|\varphi(x,y)|=0$. An empty supremum of non-negative quantities is taken to be zero. We introduce:
\[
J_0(X):=\{T \in C^*_u(X):M_\varphi(T^*T)\in\KK(\ell^2(X)) \text{ for every } \varphi\in\Pz(X)\}.
\]
We show that \(J_0(X)\) is an ideal lying between the ideal of compact operators and the ideal of ghost operators. The following result allows us to detect the obstruction to coarse embeddability with the help of $J_0(X)$ rather than merely ghost projections.

%More importantly, its compactness characterises coarse embeddability into Hilbert space.

\begin{introthm}\label{thm:char by multiplier}
Let $(X,d)$ be a discrete metric space of bounded geometry. Then the following are equivalent:
\begin{enumerate}
\item The space $X$ can be coarsely embedded into some Hilbert space;
\item There exists a sequence $\{\varphi_n\}_{n\in\mathbb N}\subseteq \Pz(X)$ such that $\|M_{\varphi_n}(T)-T\| \to 0$ as $n\to \infty$ for any $T \in C^*_{uq}(X)$;
\item Any positive $T\in C^*_u(X)$ with $M_\varphi(T)\in \KK(\ell^2(X))$ for any $\varphi\in\Pz(X)$ is compact;
\item Any positive $T\in C^*_{uq}(X)$ with $M_\varphi(T)\in \KK(\ell^2(X))$ for any $\varphi\in\Pz(X)$ is compact;
\item $J_0(X)=\KK(\ell^2(X))$.
\end{enumerate}
\end{introthm}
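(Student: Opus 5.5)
I would prove the cycle (1)$\Rightarrow$(2)$\Rightarrow$(4)$\Rightarrow$(3)$\Rightarrow$(5)$\Rightarrow$(1), using Theorem~\ref{thm:esc and ql coupling} for the last implication.

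\textbf{(1)$\Rightarrow$(2).} Let $f\colon X\to\mathcal H$ be a coarse embedding with control functions $\rho_\pm$, and set $\varphi_n(x,y)=e^{-\|f(x)-f(y)\|^2/n}$. By Schoenberg's theorem these kernels are normalised positive definite. Since $|\varphi_n(x,y)|\le e^{-\rho_-(d(x,y))^2/n}$, each $\varphi_n$ vanishes at infinity, so $\varphi_n\in\Pz(X)$. Moreover $\sup_{d(x,y)\le R}|1-\varphi_n(x,y)|\le 1-e^{-\rho_+(R)^2/n}\to0$.

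Because $\varphi_n$ is positive definite with diagonal $1$, we have $\|M_{\varphi_n}\|\le 1$. For $T$ of propagation at most $R$, bounded geometry gives $\|M_{\varphi_n}(T)-T\|\le C_R\|T\|\sup_{d\le R}|1-\varphi_n|\to0$. For quasi-local $T$ I need an approximation argument. I would write $T=M_{\chi_R}(T)+M_{1-\chi_R}(T)$, where $\chi_R$ is the indicator of the $R$-entourage. The difficulty is that $M_{\chi_R}$ need not be bounded, so I would replace it by a Schur multiplier of norm at most $1$. Concretely, I would use the quasi-locality estimate $\|\chi_A T\chi_B\|<\varepsilon$ for $d(A,B)>R$ together with a block decomposition via a $3R$-separated partition, or the standard fact that quasi-local operators are approximable by positive-definite cut-offs. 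I would then combine this with the uniform bound $\|M_{1-\varphi_n}\|\le2$.

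I expect this step, \emph{uniform} convergence on all of $C^*_{uq}(X)$, to be the main obstacle. The cleanest route may be to show that $M_{\varphi_n}(T)-T=M_{\varphi_n}(T-T')-(T-T')+\dots$ for a controlled $T'$. If $C^*_{uq}\neq C^*_u$, that is unavailable, so I would instead estimate directly: group $X\times X$ into the entourage $d\le R$, where $1-\varphi_n$ is small, and the complement, where $T$ is small in the quasi-local sense. The second piece is handled by writing $1-\varphi_n$ as an average of products $u(x)\overline{u(y)}$ type terms via a GNS representation $\varphi_n(x,y)=\langle\xi_x,\xi_y\rangle$.

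\textbf{(2)$\Rightarrow$(4).} Let $T\ge0$ lie in $C^*_{uq}(X)$ with every $M_\varphi(T)$ compact. Then $M_{\varphi_n}(T)$ is compact for each $n$, and $M_{\varphi_n}(T)\to T$ in norm, so $T$ is compact.

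\textbf{(4)$\Rightarrow$(3).} This is trivial since $C^*_u\subseteq C^*_{uq}$.

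\textbf{(3)$\Rightarrow$(5).} For $T\in J_0(X)$, the operator $T^*T$ is positive, lies in $C^*_u(X)$, and satisfies the hypothesis of (3). Hence $T^*T$ is compact, and so $T$ is compact. Conversely, compact operators lie in $J_0$ because $M_\varphi$ preserves compacts: it is a bounded normal map that preserves finite-rank matrix units.

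\textbf{(5)$\Rightarrow$(1).} I argue by contrapositive. If $X$ does not embed, Theorem~\ref{thm:esc and ql coupling} yields an escaping quasi-local coupling $\{\nu_n\}$. I would pass to a subsequence whose supports go to infinity and are far apart. From the $\nu_n$ I would build finite-propagation positive operators: take a Markov-type operator $T_n$ with matrix entries $\nu_n(x,y)$ normalised by the marginals, then truncate to propagation $R_n$ using (Q). I would take $T=\bigoplus$ of the normalised $T_n$, which is non-compact. For $\varphi\in\Pz(X)$, property (E) together with the decay of $\varphi$ forces $\|M_\varphi(T_n)\|\to0$, so $M_\varphi(T)$ is compact. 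One needs $T=S^*S$ with $S\in C^*_u(X)$, and this is obtained by taking $S=T^{1/2}$.
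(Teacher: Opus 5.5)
Your easy implications are fine: (2)$\Rightarrow$(4)$\Rightarrow$(3) and (3)$\Rightarrow$(5) match the paper, which proves (3)$\Rightarrow$(1) directly and (3)$\Leftrightarrow$(5) via Lemma~\ref{lem:SC implies ghost}. Both substantive implications, however, have genuine gaps.

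\textbf{(1)$\Rightarrow$(2).} You locate the obstacle but do not overcome it. The issue is not boundedness of $M_{\chi_{E_R}}$; it is bounded by bounded geometry. The issue is that for quasi-local $T$ nothing makes $\|T-M_{\chi_{E_R}}(T)\|$ small, and this is exactly the open question whether $C^*_{uq}(X)=C^*_u(X)$. Quasi-locality controls only block compressions $\chi_AT\chi_B$ with $d(A,B)>R$. The ``standard fact'' that quasi-local operators are approximable by positive-definite cut-offs is essentially what has to be proved.

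The missing idea is to convert the Schur multiplier into such block compressions. The paper uses $\varphi_t=e^{-t\|F(x)-F(y)\|}$ rather than the Gaussian. On each finite $Y\subseteq X$ it writes $\varphi_t(x,y)=\sum_j\lambda_jc_j(x)c_j(y)$ with $c_j\colon Y\to\{\pm1\}$, using that a finite Hilbert metric is a non-negative combination of cut metrics (Lemma~\ref{lem:cuts}). For the Gaussian this fails in general, since $(1-\varphi)/2$ would then have to satisfy the triangle inequality.

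With this decomposition, $\chi_Y(T-M_{\varphi_t}(T))\chi_Y=\sum_j\lambda_j(T_Y-U_jT_YU_j)$, and $T_Y-U_jT_YU_j$ is twice the part of $T$ crossing the cut. Off the $R$-boundary $E_j$, this part equals $2(\chi_{C_j^+}T\chi_{C_j^-}+\chi_{C_j^-}T\chi_{C_j^+})$ with $d(C_j^+,C_j^-)>R$, which quasi-locality controls. Since $\sum_j\lambda_j\chi_{E_j}\le N_X(R)\,q_t(\rho_+(R))$, the boundary terms are small by Cauchy--Schwarz. This gives Corollary~\ref{cor:ql-approx}.

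\textbf{(5)$\Rightarrow$(1).} Your operator cannot meet both requirements at once. Conditions (E) and (Q) constrain only the measure $\nu_n$, that is, the single vector state at $v_n=\sqrt{m_n}$, since $\langle v_n,\chi_AK_n\chi_Bv_n\rangle=\nu_n(A\times B)$. They give no operator-norm control on the marginal-normalised $K_n$. What (E) and the decay of $\varphi$ actually yield is $\langle v_n,M_\varphi(K_n)v_n\rangle\to0$, not $\|M_\varphi(K_n)\|\to0$.

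Normalising by marginals inflates low-mass parts of $\nu_n$ into norm-one blocks. Add mass $\delta_n\to0$ on far-away pairs $\{a_n,b_n\}$ with $0<d(a_n,b_n)\le s$; such pairs exist whenever $X$ does not embed. This preserves (E) and (Q), yet $K_n$ gains a direct summand $\begin{pmatrix}0&1\\1&0\end{pmatrix}$. For the block kernel $\varphi=\sum_B\chi_{B\times B}\in\Pz(X)$ with blocks $\{a_n,b_n\}$ and singletons, $\|M_\varphi(K_n)\|=1$, and the same holds for $(K_n)_+$. If instead the pairs are placed at distance $n$, then $\bigoplus K_n$ is not even quasi-local.

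Conversely, suppose $\nu_n$ is supported in $\{d\ge r_n\}$, as for expander measures. Then $\|M_\varphi(K_n)\|\to0$ does hold, but truncation to any fixed propagation is $0$. Nothing then places $\bigoplus K_n$, or a substitute of comparable norm, in $C^*_u(X)$.

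The paper reconciles these opposite requirements with two different operators, via Proposition~\ref{prop:matrix-obstruction}. Starting from generalised expanders, the minimax comparison of Lemma~\ref{lem:comparison} gives a dominating Dirichlet form $\eta_n$ with $\sum d^2\eta_n\le C$. This yields $S_n\le M_n$, where:
\begin{itemize}
\item $\|S_n-S_n^{(R)}\|\le R^{-2}$ and $\lambda_{\max}(S_n)\ge C^{-1}$;
\item $M_n$ is entrywise non-negative, $\|M_n\|\le1$, and $M_n$ is supported in $\{d\ge r_n\}$.
\end{itemize}
Then $T=S_+\in C^*_u(X)$ is non-compact and $T^2\le T^{1/2}MT^{1/2}$. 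Entrywise positivity and the far support of $M$ give $\|M_\varphi((V_n^{(m)})^*M_nV_n^{(m)})\|\le\varepsilon_nN_X(R)^2\|V^{(m)}\|^2\to0$, so $M_\varphi(T)$ is compact.
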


%Theorem~\ref{thm:char by multiplier} explains how the compactness condition for ghost projections can be strengthened to obtain a characterisation of coarse embeddability. Although non-compact ghost projections need not exist when coarse embeddability fails, there is always a non-compact ghost operator \(T \in C^*_u(X)\) for which \(M_\varphi(T^*T)\) is compact for every \(\varphi\in\mathcal P_0(X)\). Thus, Schur multipliers allow us to detect the obstruction to coarse embeddability that may be invisible to ghost projections.

As an application, we obtain a characterisation of coarse embeddability in terms of the relative Haagerup property. More precisely, we show that \(X\) coarsely embeds into Hilbert space \emph{if and only if} \(C_{uq}^*(X)\) has the \(E\)-Haagerup property with respect to its diagonal subalgebra \(\ell^\infty(X)\), where \(E\) denotes the canonical conditional expectation. This extends the known result in \cite{KLS22} from uniform Roe algebras to uniform quasi-local algebras (see Proposition~\ref{thm:relative_Haagerup}).

As another application of Theorem~\ref{thm:char by multiplier}, we show that for a fixed $p\in (2,+\infty)$, a discrete metric space \(X\) of bounded geometry can be coarsely embedded into Hilbert space \emph{if and only if} it can be coarsely embedded into \(\ell^p\) together with either \(J_0(X)\) having real rank zero or \(J_0(X)/\KK(\ell^2(X))\) having property $\SP$ (see Proposition~\ref{thm:lp and l2}). This provides a partial answer to the open question whether coarse embeddability into \(\ell^p\) implies coarse embeddability into Hilbert space for a discrete metric space of bounded geometry (see \cite{JR06}).

%We show that, for a discrete metric space \(X\) of bounded geometry which coarsely embeds into \(\ell^p\), Hilbert-space coarse embeddability is equivalent to \(J_0(X)\) having real rank zero, or equivalently, to the quotient \(J_0(X)/\mathcal K(\ell^2(X))\) having property (SP). The proof combines our Schur-multiplier characterisation with the compactness of ghost projections on sparse spaces coarsely embeddable into \(\ell^p\) (see Theorem 6.7).

\subsection{Operator norm state localisation}

The distinction between Property A and coarse embeddability into Hilbert space discussed above can also be seen from the viewpoint of operator norm localisation. Recall that the operator norm localisation property (ONL) from \cite{CTWY08} asserts, roughly speaking, that the norm of a positive finite-propagation operator can be almost attained by a unit vector with support having controlled  diameter. Sako proved in \cite{Sako14} that ONL is equivalent to Property A, and hence it is natural to ask whether an appropriate weakening of ONL could characterise coarse embeddability into Hilbert space.

To this end, we replace localised vector states by general normal states. Given a positive trace-class operator $D$ on $\ell^2(X)$ with $\Tr(D)=1$, we have a normal state on $\B(\ell^2(X))$ given by $T \mapsto \Tr(TD)$. It is known that every normal state $\phi$ on $\B(\ell^2(X))$ has this form, and $D$ is called a \emph{density operator} of $\phi$.
Instead of requiring the supporting vectors to be localised in bounded regions in the case of ONL, we impose a decay condition on the matrix coefficients of the corresponding density operators. This leads us to introduce the following notion:

\begin{introdefn}\label{defn:ONSL}
A discrete metric space $(X,d)$ of bounded geometry is said to have the \emph{operator norm state localisation property (ONSL)} if for any $R>0$ and $\varepsilon \in (0,1)$, there exists a non-increasing function $\omega:[0,\infty)\to [0,1]$ with $\lim_{s\to\infty}\omega(s)=0$ such that for any positive operator $T\in \B(\ell^2(X))$ with $\ppg(T)\le R$, there exists a positive trace-class operator $D$ on $\ell^2(X)$ with $\Tr(D)=1$ satisfying $\Tr(TD)\ge(1-\varepsilon)\|T\|$ and $|D_{xy}|\le \omega(d(x,y))\sqrt{D_{xx}D_{yy}}$ for any $x,y\in X$.
\end{introdefn}

We show that ONSL provides another characterisation of coarse embeddability.

\begin{introthm}\label{thm:CE and ONL}
Let $(X,d)$ be a discrete metric space of bounded geometry. Then $X$ can be coarsely embedded into Hilbert space \emph{if and only if} $X$ satisfies ONSL.
\end{introthm}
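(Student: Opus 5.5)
We prove both directions, using Theorem~\ref{thm:char by multiplier} and Theorem~\ref{thm:esc and ql coupling} as the main tools.

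\textbf{Coarse embeddability implies ONSL.} Assume $X$ coarsely embeds into Hilbert space. By Theorem~\ref{thm:char by multiplier}(2), there are kernels $\varphi_n\in\Pz(X)$ with $\|M_{\varphi_n}(T)-T\|\to 0$ for every $T\in C^*_{uq}(X)$. The convergence must be uniform over all positive $T$ with $\ppg(T)\le R$ and $\|T\|\le 1$. I expect to obtain this from the standard construction: $\varphi_n(x,y)=e^{-t_n\|f(x)-f(y)\|^2}$ with $t_n\to0$. Then $|1-\varphi_n|\le t_n\rho_+(R)^2$ on the $R$-entourage, and bounded geometry gives $\|M_{\varphi_n}(T)-T\|\le N_R\,t_n\rho_+(R)^2\|T\|$. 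Fix $n$ with $\|M_{\varphi}(T)-T\|\le \tfrac{\varepsilon}{2}\|T\|$, where $\varphi=\varphi_n$, and set $\omega(s)=\sup_{d(x,y)\ge s}|\varphi(x,y)|$. This $\omega$ is non-increasing, tends to $0$, and satisfies $\omega\le1$.

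Given $T$, pick a unit vector $\xi$ with $\langle M_\varphi(T)\xi,\xi\rangle\ge(1-\varepsilon)\|T\|$; this is possible since $\|M_\varphi(T)\|\ge(1-\varepsilon/2)\|T\|$ and $M_\varphi(T)$ is positive, as $\varphi$ is positive definite. The Schur multiplier is self-dual under the trace pairing: $\Tr(M_\varphi(T)\,\xi\xi^*)=\Tr(T\,M_{\bar\varphi}(\xi\xi^*))$. So take $D=M_{\bar\varphi}(\xi\xi^*)$. It is positive because $\bar\varphi$ is positive definite, and it is trace class with $D_{xx}=|\xi_x|^2$, so $\Tr D=1$. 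Finally $|D_{xy}|=|\varphi(x,y)||\xi_x||\xi_y|\le\omega(d(x,y))\sqrt{D_{xx}D_{yy}}$. This direction should be routine; one only needs to check that trace-class positivity is preserved by Schur multiplication with a positive definite kernel.

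\textbf{ONSL implies coarse embeddability.} We argue the contrapositive. If $X$ does not coarsely embed, Theorem~\ref{thm:esc and ql coupling} gives an escaping quasi-local coupling $\{\nu_n\}$. The natural plan is to turn $\nu_n$ into positive operators of bounded propagation that defeat ONSL. Condition (Q) suggests working with a finite-propagation truncation. The intended operators are $T_n=\Delta_n$-type Laplacians built from the $R$-local part of the coupling, or alternatively a Poincar\'e-type operator detecting that any state almost attaining the norm must have mass spread along $\nu_n$-pairs at large distance.

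A cleaner route may go through Theorem~\ref{thm:char by multiplier}. Failure of embeddability gives a positive non-compact $T\in C^*_u(X)$ with $M_\psi(T)$ compact for all $\psi\in\Pz(X)$; approximate $T$ by finite-propagation $T'$. Given ONSL with $\omega$, I would build from $\omega$ and the density operators a kernel in $\Pz(X)$. Applying ONSL to compressions $T'(1-P_F)$ for finite $F$ escaping to infinity yields states $D_F$ with $\Tr(T'D_F)\ge(1-\varepsilon)\|T'\|$ and decaying off-diagonal entries. The aim is to show $\Tr(T'D_F)$ is controlled by $\|M_\psi(T')\|$ restricted outside $F$, plus $\sup_{d\le R}|1-\psi|$-type errors, for a single $\psi\in\Pz(X)$ manufactured from $\omega$.

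The \textbf{main obstacle} is producing a genuine positive-definite kernel from the merely pointwise decay $|D_{xy}|\le\omega\sqrt{D_{xx}D_{yy}}$. I would try the normalised kernel $D_{xy}/\sqrt{D_{xx}D_{yy}}$, which is positive definite on the support of $D$ but depends on $T$. Taking weak$^*$ limits or direct sums over a countable family of $T$'s, with a diagonal argument, should give a uniform family in $\Pz(X)$. That family would verify condition (2) of Theorem~\ref{thm:char by multiplier} on finite-propagation operators, and by density on $C^*_{uq}(X)$.
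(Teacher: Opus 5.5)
Your forward direction is correct and essentially the paper's: you take $D=M_{\overline\varphi}(\xi\xi^*)$ and use trace duality. The difference is that you use the Gaussian kernel with the elementary Schur-test bound $\|M_\varphi(T)-T\|\le N_X(R)\,t\,\rho_+(R)^2\|T\|$. That bound suffices for propagation $\le R$ and avoids Corollary~\ref{cor:ql-approx}.

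The converse, however, is not proved, and neither of your two routes reaches a contradiction.

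\textbf{The coupling route.} Condition \textbf{(E)} says exactly that the $R$-local part of $\nu_n$ has vanishing mass. So ``Laplacians built from the $R$-local part of the coupling'' carry no obstruction. What is actually needed is an operator that is uniformly approximable at propagation $R$ but whose quadratic form is dominated by an entrywise non-negative, far-supported matrix of norm $\le1$. The paper builds this in Proposition~\ref{prop:matrix-obstruction}, using the minimax comparison of Lemma~\ref{lem:comparison}; the second-moment control on $\eta_n$ is what yields $\|S_n-S_n^{(R)}\|\le R^{-2}$. The decay condition on $D_n$ is then played against $M_n$ through $v_n(x)=\sqrt{(D_n)_{xx}}$:
\[
\Tr(S_nD_n)\le\Tr(M_nD_n)\le\omega(r_n)\langle v_n,M_nv_n\rangle\le\omega(r_n)\to0 .
\]
At the same time, ONSL applied to $S_n^{(R)}+3$ forces $\Tr(S_nD_n)\ge C^{-1}-2R^{-2}-6\varepsilon$. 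None of this appears in your sketch.

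\textbf{The Schur-multiplier route.} The obstacle you name is not the real one. The kernel $\psi_D(x,y)=D_{xy}/\sqrt{D_{xx}D_{yy}}$, extended by $\delta_{x=y}$ off $\{D_{xx}>0\}$, is already normalised positive-definite. It satisfies $|\psi_D(x,y)|\le\omega(d(x,y))$, so it lies in $\Pz(X)$ with decay uniform in $T$. Moreover $\Tr(TD)=\langle v,M_{\overline{\psi_D}}(T)v\rangle$ with $v_x=\sqrt{D_{xx}}$.

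What is missing is how to turn these $T$-dependent kernels into a contradiction. Your stated endgame, verifying condition (2) of Theorem~\ref{thm:char by multiplier}, cannot be reached this way. ONSL only gives lower bounds $\|M_\psi(T)\|\ge(1-\varepsilon)\|T\|$, never $M_\psi(T)\approx T$. Also, pointwise or weak$^*$ limits of kernels chosen for compressions escaping to infinity throw away exactly the information at infinity.

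A version that does work is the following.
\begin{itemize}
\item Start from a positive non-compact $T$ that is block-diagonal on a sparse subspace $\bigsqcup_nY_n$, has blocks of norm $\ge c>0$, and has $M_\varphi(T)$ compact for all $\varphi\in\Pz(X)$. The \emph{proof} of Theorem~\ref{thm:char by multiplier} builds such a $T=\bigoplus_n(S_n)_+$; its statement alone does not give the block structure.
\item Apply ONSL to positive finite-propagation approximants of each block, and compress each $D_n$ to $Y_n$.
\item Glue the kernels $\psi_{D_n}$ block-diagonally into a single $\psi\in\Pz(X)$. This is possible only because the blocks are separated.
\item Then $\|\chi_{Y_n}M_{\overline\psi}(T)\chi_{Y_n}\|$ stays bounded below, which contradicts compactness.
\end{itemize}
Without this reduction to a sparse block-diagonal witness and the gluing step, your argument for the converse is incomplete.
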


On the other hand, Property A can be recovered by restricting ranks of density operators. More precisely, we say that \(X\) has \emph{ONSL with uniformly finite rank} if, for each \(R>0\) and \(\varepsilon\in(0,1)\), the density operators \(D\) in Definition~\ref{defn:ONSL} can be chosen with rank bounded by a constant depending only on \(R\) and \(\varepsilon\). We manage to show that a discrete metric space of bounded geometry has Property A \emph{if and only if} it has ONSL with uniformly finite rank (see Proposition~\ref{prop:ONL and bounded rank} and Corollary~\ref{cor:ONSL with bdd rank}).

%An interesting feature of ONSL is that the density operators $D$ in Definition~\ref{defn:ONSL} are not required to have uniformly bounded rank. In fact, imposing such a restriction recovers the original ONL property. More precisely, we say that \(X\) has \emph{ONSL with uniformly finite rank} if, for each \(R>0\) and \(\varepsilon\in(0,1)\), the density operators \(D\) in Definition E can be chosen with rank bounded by a constant depending only on \(R\) and \(\varepsilon\). We prove that

%We prove that ONSL with uniformly finite rank is equivalent to ONL and hence, property A by Sako's result. Thus, the distinction between Property A and coarse embeddability into Hilbert space can be detected by the ranks of the density operators used in ONSL.

%\begin{introprop}\label{prop:A iff ONSL unif}
%A discrete metric space of bounded geometry has property A \emph{if and only if} it has ONSL with uniformly finite rank.
%\end{introprop}

\subsection{Ideas of the proofs}

The proofs of our three characterisations rely on two key ingredients. The first comes from Tessera's characterisation of coarse embeddability in terms of generalised expanders \cite{Tes09}, which provides geometric obstructions through certain Poincar\'{e} inequalities. In our first characterisation, these inequalities lead directly to escaping quasi-local couplings. For the other two characterisations, we develop this approach further by translating the Poincar\'{e} inequalities into suitable operator inequalities. This allows us to construct operators with non-trivial spectral information, controlled by matrices whose supports escape to infinity (see Proposition~\ref{prop:matrix-obstruction}).

The second is a quantitative Schur multiplier estimate based on a finite cut decomposition of the Hilbert metric. Given a coarse map \(F:X\to \H\) from a metric space $X$ to a Hilbert space $\H$, consider the kernel $\phi_t(x,y)=e^{-t\|F(x)-F(y)\|}$ for $t>0$, which is positive-definite. We establish an estimate that relates the Schur multiplier \(M_{\phi_t}(T)\) to the quasi-local behaviour of \(T\). In particular, we show that \(M_{\phi_t}(T) \to T\) as $t \to 0$ for any quasi-local $T$ (see Proposition~\ref{lem:ql-approx} and Corollary~\ref{cor:ql-approx}).

\subsection*{Organisation} Section~\ref{sec:pre} contains the necessary preliminaries. In Sections~\ref{sec:obstructions} and \ref{sec:cuts}, we develop the two main tools described above: operator-theoretic obstructions arising from generalised expanders and Schur multiplier estimates based on cut decompositions. Sections~\ref{sec:char I}, \ref{sec:schur-ghost} and \ref{sec:onsl} are devoted to the proofs of our three characterisations, respectively, together with their applications.

\subsection*{AI use statement} We acknowledge the use of OpenAI's ChatGPT in exploring mathematical ideas related to some of the results in this paper and in improving parts of the exposition. All mathematical arguments were carefully checked and revised by the authors, who bear full responsibility for the final manuscript.

\section{Preliminaries}\label{sec:pre}

For a discrete metric space $(X,d)$, $x\in X$ and $R \geq 0$, denote the closed ball $B(x,R):=\{y\in X:d(x,y) \leq R\}$, or $B_X(x,R)$ if we need to emphasise the space.
We say that $(X,d)$ has \emph{bounded geometry} if for every $R\geq 0$, we have
\[
N_X(R):=\sup_{x\in X} |B(x,R)| < +\infty.
\]
A discrete metric space is called \emph{locally finite} if every bounded subset is finite.
For $R \geq 0$, denote the \emph{$R$-neighbourhood} of $A \subseteq X$ by $\N_R(A):=\{x\in X: d(x,A) \leq R\}$.
For $R\geq 0$, denote the \emph{$R$-entourage} $E_R:=\{(x,y): d(x,y) \leq R\}$.
%
%A subset $K \subseteq X \times X$ is called a \emph{partial translation} if there exists $S>0$ such that $K \subseteq E_S$ and for each $x\in X$, there is at most one $y\in X$ with $(x,y) \in K$ and at most one $z\in X$ with $(z,x) \in K$. In this case, there is a bijection $t: D \to R$ with $D,R \subseteq X$ such that $K$ coincides with $\operatorname{graph}(t)=\{(t(y),y):y\in D\}$.

We say that a metric space $(X,d)$ can be \emph{coarsely embedded into Hilbert space} if there exist a map $F: X\to \H$ for some Hilbert space $\H$ and non-decreasing functions $\rho_-, \rho_+: [0,+\infty) \to [0,+\infty)$ (called \emph{control functions}) such that $\lim\limits_{t\to +\infty} \rho_{\pm}(t) = +\infty$ and
\[
\rho_-(d(x,y)) \leq \|F(x) - F(y)\|_\H \leq \rho_+(d(x,y)) \quad \text{for any} \quad x,y\in X.
\]
Say that $f:(X,d_X) \to (Y,d_Y)$ between two metric spaces is \emph{coarse} if there is a non-decreasing function $\rho_+: [0,+\infty) \to [0,+\infty)$ such that $d_Y(f(x), f(y)) \leq \rho_+(d_X(x,y))$ for any $x,y\in X$ and $f^{-1}(B)$ is bounded for any bounded subset $B\subset Y$.

Given a Hilbert space $\H_0$, denote the Hilbert space consisting of all square summable functions from $X$ to $\H_0$ by $\ell^2(X;\H_0) \cong \ell^2(X) \otimes \H_0$. Denote all bounded linear operators on this Hilbert space by $\B(\ell^2(X;\H_0))$, and all compact operators by $\KK(\ell^2(X;\H_0))$. For $T \in \B(\ell^2(X;\H_0))$ and $x,y \in X$, define $T_{xy} \in \B(\H_0)$ by
\[
\langle T_{xy}\xi,\eta \rangle = \langle T(\delta_y \otimes \xi), \delta_x \otimes \eta \rangle \quad \text{for} \quad \xi, \eta \in \H_0.
\]
Here $\delta_x$ denotes the Dirac function at $x$ in $\ell^2(X)$. The \emph{support} of $T$ is defined to be
\[
\supp(T):=\{(x,y) \in X \times X: T_{xy} \neq 0\}.
\]
Given $A \subseteq X$, denote by $\chi_A$ the characteristic function of $A$, also regarded as a multiplication operator on $\ell^2(X;\H_0)$.

For two metric spaces $X,Y$ and $T \in \B(\ell^2(X;\H_0), \ell^2(Y;\H_0))$, we define $\supp(T)$ similarly. Given a map $f: X\to Y$ and a Hilbert space $\H_0$, say that an isometry $V: \ell^2(X;\H_0) \to \ell^2(Y;\H_0)$ \emph{covers $f$} if there is $C>0$ such that $\supp(V)$ is contained in $\{(y,x) \in Y \times X: d_Y(f(x),y) \leq C\}$. Denote $\Ad_V(T):= VTV^*$ for $T \in \B(\ell^2(X;\H_0))$.

%For a closed subspace $U\subseteq \ell^2(X;\H_0)$, denote by $P_U \in \B(\ell^2(X;\H_0))$ the orthogonal projection onto $U$. Denote $u_A:=\chi_A u$ for $u\in \ell^2(X;\H_0)$ and $A \subseteq X$, and $P_u:=P_{\CC u}$ if $\|u\|=1$. Here $\chi_A$ denotes the characteristic function of $A$.

A sequence of finite metric spaces $\{(X_n,d_n)\}_{n\in \NN}$ is said to have \emph{uniformly bounded geometry} if $\sup_n N_{X_n}(R) < \infty$ for each $R\geq 0$. Their \emph{coarse disjoint union} is a metric space $(X,d)$ where $X = \bigsqcup_{n\in \NN} X_n$ as a set, the restriction of $d$ to each $X_n$ is $d_n$ and $d$ satisfies $d(X_n, X_m) \to \infty$ as $n+m \to \infty$ and $n \neq m$. We call a metric space \(X\) \emph{sparse} if $X$ has this form.
%Clearly, $\{(X_n,d_n)\}_{n\in \NN}$ has uniformly bounded geometry \emph{if and only if} their coarse disjoint union has bounded geometry.
For such $X$ and a Hilbert space $\H_0$, an operator $T\in \B(\ell^2(X;\H_0))$ is called \emph{block-diagonal} if $T=\SOT$-$\sum_{n\in \NN} T_n$, where each $T_n \in \B(\ell^2(X_n;\H_0))$.
%Such $T$ is said to have \emph{block-rank at most $M$} if each $T_n$ has rank at most $M$, and \emph{block-rank-$M$} if each $T_n$ has rank exactly $M$.

\begin{defn}\label{defn:ppg and quasi-local}
Let $(X,d)$ be a discrete metric space, $\H_0$ be a Hilbert space and $T \in \B(\ell^2(X;\H_0))$.
\begin{enumerate}
  \item The \emph{propagation} of $T$ is defined to be $\ppg(T):=\sup\{d(x,y): T_{xy} \neq 0\}$.
  We say that $T$ has \emph{finite propagation} if $\ppg(T) < \infty$.
  \item We say that $T$ is \emph{quasi-local} if for any $\varepsilon>0$, there exists $R>0$ such that for any $A,B \subseteq X$ with $d(A,B) > R$, we have $\|\chi_A T \chi_B\| < \varepsilon$.
  %Given $\varepsilon, R>0$, say that $T$ is \emph{$(\varepsilon,R)$-quasi-local} if for any $A,B \subseteq X$ with $d(A,B) > R$, we have $\|\chi_A T \chi_B\| < \varepsilon$.
  \item We say that $T$ is \emph{locally compact} if \(\chi_FT\) and \(T\chi_F\) are compact for every finite subset \(F\subseteq X\).
%  each $T_{x,y}$ is a compact operator on $\H_0$.
\end{enumerate}
\end{defn}

\begin{defn}\label{defn:Roe and quasi-local algebra}
Let $(X,d)$ be a discrete metric space of bounded geometry and $\H_0$ a Hilbert space.
\begin{enumerate}
  \item The \emph{Roe algebra} of $X$, denoted by $C^*(X;\H_0)$, is the norm closure of all finite propagation and locally compact operators in $\B(\ell^2(X;\H_0))$.
  %The \emph{algebraic Roe algebra} of $X$ (with coefficient $\H_0$), denoted by $\CC[X;\H_0]$, is the set of all finite propagation and locally compact operators in $\B(\ell^2(X;\H_0))$. The \emph{Roe algebra} of $X$ (with coefficient $\H_0$), denoted by $C^*(X;\H_0)$, is the norm closure of $\CC[X;\H_0]$.
  \item The \emph{quasi-local algebra} of $X$, denoted by $C^*_q(X;\H_0)$, is the set of all quasi-local and locally compact operators in $\B(\ell^2(X;\H_0))$.
\end{enumerate}
\end{defn}

When $\H_0 = \CC$, simply denote $C^*_u(X) = C^*(X;\CC)$, called the \emph{uniform Roe algebra} of $X$. Also $C^*_{uq}(X)=C^*_q(X;\CC)$ is the \emph{uniform quasi-local algebra}.
When $\H_0$ is infinite-dimensional and separable, $C^*(X;\H_0)$ is the usual \emph{Roe algebra of $X$}, and $C^*_q(X;\H_0)$ is the usual \emph{quasi-local algebra of $X$}.

We recall the notion of ghost operators:

\begin{defn}[G. Yu]\label{defn:ghost}
Let $(X,d)$ be a discrete metric space of bounded geometry and $\H_0$ a Hilbert space. Say that $T \in \B(\ell^2(X;\H_0))$ is a \emph{ghost} operator if for any $\varepsilon>0$, there exists a finite subset $F \subseteq X$ such that $\|T_{xy}\| < \varepsilon$ for any $(x,y) \in (X \times X) \setminus (F \times F)$.
\end{defn}

Note that all ghost operators in the Roe algebra form a closed $\ast$-ideal, denoted by $G(X;\H_0)$. Similarly, all ghost operators in the quasi-local algebra form a norm-closed $\ast$-ideal, denoted by $G_q(X;\H_0)$. When $\H_0 = \CC$, we simply write $G_u(X)$ and $G_{uq}(X)$ for $G(X;\CC)$ and $G_q(X;\CC)$, respectively.

Finally, we recall the notion of Schur multipliers. For a set $X$, we say that a kernel $\varphi:X\times X\to\CC$ is a \emph{Schur multiplier kernel} if
\begin{equation}\label{EQ:Schur multipler}
 M_\varphi(A)=\left(\varphi(x,y)A_{xy}\right)_{x,y\in X} \quad \text{for} \quad A=(A_{xy})_{x,y\in X} \in \B(\ell^2(X))
\end{equation}
is a bounded map on $\B(\ell^2(X))$, whose operator norm is denoted by $\|\varphi\|_S$. We also call $M_\varphi$ the associated \emph{Schur multiplier} of $\varphi$. Note that a normalised positive-definite kernel defines a unital completely positive Schur multiplier. Also note that every bounded Schur multiplier preserves compact operators and finite propagation and hence, it preserves $C_u^*(X)$. It also preserves $C_{uq}^*(X)$ since $\chi_AM_\varphi(T)\chi_B=M_\varphi(\chi_AT\chi_B)$ and therefore, $\|\chi_AM_\varphi(T)\chi_B\| \leq \|\varphi\|_S \cdot \|\chi_AT\chi_B\|$.

\section{Generalised expanders and matrix obstructions}\label{sec:obstructions}

This section collects the matrix obstructions to coarse embeddability, derived from generalised expanders. This is one of the key ingredients in the proof for our characterisations.

%used in the first three characterisations coming from generalised expanders. We first recall the notion of generalised expanders and their sparse refinements, and then compare Poincar\'e forms and obtain a pair of matrices whose spectral and propagation properties will be used independently in Sections~\ref{sec:cuts} and~\ref{sec:onsl}.

\subsection{Generalised expanders and sparse refinements}
We recall the following:

\begin{defn}[\cite{Tes09}]\label{defn:generalised expanders}
Given $k,r > 0$, a finite metric space $(X,d)$ is called a \emph{generalised $(k,r)$-expander} if there is a symmetric probability measure $\mu$ on $X\times X$ such that $\supp(\mu) \subseteq \{(x,y) \in X^2: d(x,y)\geq r\}$ and
\begin{equation}\label{EQ:generalised expander}
 \sum_{x,y \in X}\|F(x)-F(y)\|^2 \mu(x,y) \leq k^2
\end{equation}
for any Hilbert-space-valued map $F:X \to \H$ satisfying
\[
\|F(x) - F(y)\|_{\H} \leq d(x,y) \quad \text{whenever} \quad d(x,y) \geq 1.
\]
%for any $1$-Lipschitz map $F:X \to \H$ for some Hilbert space $\H$.

A sequence of finite metric spaces $\{(X_n,d_n)\}_n$ is called a sequence of \emph{generalised expanders} if there exist $k>0$ and a sequence $r_n \to \infty$ such that each $X_n$ is a generalised $(k,r_n)$-expander.
\end{defn}

The following characterisation is a reformulation of \cite[Theorems 4 and 14]{Tes09}. Indeed, necessity comes from the Hilbert-space case of \cite[Theorem 14]{Tes09} with \(p=2\) and \(\rho_+(t)=t\), and sufficiency is directly from \cite[Theorem 4]{Tes09}.

\begin{prop}\label{prop:tessera-general}
A metric space $X$ does not coarsely embed into Hilbert space \emph{if and only if} there exists a sequence of finite subsets $\{Y_n\}_n$ of $X$ which forms a sequence of generalised expanders.
\end{prop}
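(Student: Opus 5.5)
The statement is a reformulation of Tessera's results, so the plan is to derive each direction from \cite[Theorems 4 and 14]{Tes09}. The work is to check that our normalisation of generalised expanders matches his: symmetric probability measures, support in $\{d\ge r\}$, and the Poincaré inequality for maps that are $1$-Lipschitz at scales $\ge 1$. Tessera's definitions are stated for maps that are $1$-Lipschitz at large scale, possibly with an additive constant or a general control function $\rho_+$, and this is what has to be matched.

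\emph{Sufficiency.} Suppose finite subsets $Y_n\subseteq X$ are generalised $(k,r_n)$-expanders with $r_n\to\infty$, witnessed by measures $\mu_n$, and suppose $F\colon X\to\H$ is a coarse embedding with control functions $\rho_\pm$. First I would reduce to the case $\rho_+(t)\le Ct$ for $t\ge1$. I do not want to assume $X$ is quasi-geodesic, so instead I would use Tessera's observation that one may replace $F$ by a map that is Lipschitz at large scale, at the cost of a worse but still proper $\rho_-$; this is part of \cite[Theorem 4]{Tes09}. Then $F/C$ restricted to $Y_n$ satisfies the hypothesis of \eqref{EQ:generalised expander}. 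Hence
\[
\rho_-(r_n)^2/C^2\le \sum_{x,y}\|F(x)-F(y)\|^2\mu_n(x,y)/C^2\le k^2,
\]
since $\mu_n$ is supported where $d\ge r_n$ and $\rho_-$ is non-decreasing. Letting $n\to\infty$ forces $\rho_-(r_n)\to\infty$, which is a contradiction.

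\emph{Necessity.} Suppose $X$ does not coarsely embed. I would apply \cite[Theorem 14]{Tes09} with $p=2$ and $\rho_+(t)=t$. It gives $k$, radii $r_n\to\infty$, finite subsets $Y_n$ and probability measures $\mu_n$ on $Y_n\times Y_n$ supported at distance $\ge r_n$, such that the Poincaré inequality holds for all maps that are $1$-Lipschitz at scales $\ge1$. If Tessera's measures are not symmetric, I would replace $\mu_n$ by $\tfrac12(\mu_n+\mu_n^{t})$, where $\mu_n^t(x,y)=\mu_n(y,x)$. Both the support condition and the inequality are preserved, because $\|F(x)-F(y)\|$ is symmetric in $x,y$.

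The main obstacle is the precise bookkeeping between Tessera's formulation and ours, in particular the large-scale Lipschitz reduction. If his Theorem 14 is phrased with a different class of test maps, I would check that our class is contained in his, so that his inequality applies to our maps. I expect this to be routine.
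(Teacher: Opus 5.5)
Your proposal is correct and takes the same route as the paper. The paper gives no further argument: it obtains sufficiency directly from \cite[Theorem 4]{Tes09}, and necessity from the Hilbert-space case of \cite[Theorem 14]{Tes09} with $p=2$ and $\rho_+(t)=t$. Your additional bookkeeping is sound, namely symmetrising the measures and upgrading a coarse embedding to one that is Lipschitz at scales $\geq 1$. For Hilbert-space targets the upgrade holds for any metric space, for instance via the conditionally negative-definite kernel $\sum_n(1-e^{-t_n\|F(x)-F(y)\|^2})$ with $t_n\to 0$ sufficiently fast, so it does not depend on how the result is attributed to Tessera.
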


For locally finite metric spaces, the sequence $\{Y_n\}_n$ can be made sparse as follows. This reduction is useful to prove our main result later.

\begin{prop}\label{prop:sparse-reduction}
Let $(X,d)$ be a locally finite metric space. Then $X$ does not coarsely embed into Hilbert space \emph{if and only if} there exists a sequence of disjoint finite subsets $\{Y_n\}_n$ of $X$ with $d(Y_n, Y_m) \to \infty$ as $n+m \to \infty$ and $n \neq m$ such that $\{Y_n\}_n$ is a sequence of generalised expanders.
\end{prop}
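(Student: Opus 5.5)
The sufficiency direction is immediate from Proposition~\ref{prop:tessera-general}. A sequence of disjoint finite subsets that forms a sequence of generalised expanders is in particular a sequence of finite subsets forming generalised expanders. Hence $X$ does not coarsely embed into Hilbert space. All the work is in necessity: starting from the sequence $\{Y_n\}_n$ given by Proposition~\ref{prop:tessera-general}, we extract a sparse subsequence, possibly after shrinking each set.

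The key observation is that the generalised expander condition is inherited by subsets carrying the measure. Suppose $Y$ is a generalised $(k,r)$-expander via $\mu$, and $Z\subseteq Y$ contains the projections of $\supp(\mu)$. Then $Z$ is a generalised $(k,r)$-expander via the same $\mu$. Indeed, any admissible map $F:Z\to\H$ must be extended to $Y$ before the inequality for $Y$ can be applied. This is awkward, so instead we restrict to subsets on which we can handle the measure directly. We use two facts.

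\begin{itemize}
\item The sums in \eqref{EQ:generalised expander} only involve points of $\supp(\mu)$. So we may replace $Y_n$ by $Y_n'=\pi_1(\supp\mu_n)\cup\pi_2(\supp\mu_n)$, provided the Poincar\'e inequality still holds for maps defined only on $Y_n'$.
\item A map on $Y_n'$ satisfying the Lipschitz-type condition extends to $Y_n$. Alternatively, and more simply, we keep $Y_n$ itself and only discard indices.
\end{itemize}

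Discarding indices suffices, as follows. First we show that, for each bounded set $B$, only finitely many $Y_n$ can meet $B$ in a way that matters. Fix a base point $x_0$ and the balls $B(x_0,m)$, which are finite by local finiteness. The measures $\mu_n$ are supported on pairs at distance at least $r_n\to\infty$.

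We claim that the mass of $\mu_n$ on pairs meeting $B(x_0,m)$ must tend to $0$. Test the Poincar\'e inequality with $F(x)=t\,\chi_{B(x_0,m)}(x)\cdot$(unit vector), which is not Lipschitz. So instead we use $F(x)=\min\{d(x,x_0),\,\cdot\}$-type truncations. Concretely, take $F(x)=\max\{0,\,L-d(x,x_0)\}$. This is $1$-Lipschitz, hence admissible. For a pair $(x,y)$ with $x\in B(x_0,m)$ and $d(x,y)\ge r_n\ge 2L$, we get $|F(x)-F(y)|\ge L-m$. Therefore
\[
\mu_n\bigl(\{(x,y):x\in B(x_0,m)\}\bigr)\le \frac{k^2}{(L-m)^2}.
\]
Letting $L\to\infty$ along with $n$ makes this mass tend to $0$. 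Then we restrict $\mu_n$ to pairs avoiding $B(x_0,m)$ and renormalise, with $Y_n$ replaced by $Y_n\setminus B(x_0,m)$.

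The main obstacle is justifying this restriction. After removing a set, admissible maps on $Y_n\setminus B$ need not extend admissibly to $Y_n$. To handle this, we use a Lipschitz-extension-free argument. Any admissible $F$ on $Y_n\setminus B$ can be extended to $Y_n$ by assigning to each point of $B$ the value $F$ at a nearest point $z$ of $Y_n\setminus B$. This keeps $\|F(x)-F(y)\|\le d(x,y)+2\operatorname{diam}$-type errors, controlled by the finite set $B$. Alternatively, we pass to the Poincar\'e inequality for the renormalised measure with the constant $k$ enlarged to $2k$, absorbing the errors since $r_n\to\infty$.

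With this in hand, we choose inductively $n_1<n_2<\dots$ and radii $m_j$. Each $m_j$ is taken large enough that $B(x_0,m_j)$ contains the $j$-neighbourhood of $Y_{n_1}'\cup\dots\cup Y_{n_{j-1}}'$. The index $n_j$ is taken large enough that $\mu_{n_j}$ has mass at most $1/2$ on pairs meeting $B(x_0,m_j)$. We then set $Y_j'=Y_{n_j}\setminus B(x_0,m_j)$, with the renormalised measures. This gives disjoint sets with $d(Y_i',Y_j')\to\infty$ that form generalised $(2k,r_{n_j})$-expanders.
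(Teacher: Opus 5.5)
\textbf{Verdict: genuine gap.} Your sufficiency direction is fine. So are the mass estimate (the test function $\max\{0,L-d(x,x_0)\}$ gives mass $\le k^2/(L-m)^2$ on pairs with first coordinate in $B(x_0,m)$ once $r_n\ge 2L$) and the inductive bookkeeping. The paper proves the same decay claim using $\min\{d(\cdot,A),R\}$. The gap is the step you yourself flag as the main obstacle, and neither suggested fix closes it.

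\textbf{Why the extension step fails.} You delete $B=B(x_0,m_j)$ from $Y_{n_j}$ and restrict the measure to $(Y_{n_j}\setminus B)^2$, with no buffer between the two. So you need to turn an arbitrary admissible $F$ on $Y_{n_j}\setminus B$ into an admissible map on $Y_{n_j}$ that agrees with $F$ on the support of the new measure, with a constant independent of $j$. No Lipschitz extension theorem of that kind holds from subsets of a general metric space into Hilbert space. The nearest-point extension does not supply one:
\begin{itemize}
\item Take $b,b'\in Y_{n_j}$ near $x_0$ with $d(b,b')\approx 1$. Their nearest points in $Y_{n_j}\setminus B$ each lie at distance about $m_j$ from them, possibly on opposite sides of the ball, so at mutual distance of order $m_j$. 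An admissible $F$ may separate them by that much.
\item So the extended map is only $O(m_j)$-Lipschitz at scale $\ge1$. The additive error is controlled by $\diam(B)$, but not uniformly, since $m_j\to\infty$. Rescaling to make it admissible multiplies the Poincar\'e constant by a factor of order $m_j^2$.
\item Appealing to $r_n\to\infty$ does not help. Admissibility is a condition on all pairs at distance $\ge 1$, not only on $\supp(\mu_n)$. An additive error $E$ in the map contributes an additive $O(E^2)$ to the energy, whatever $r_n$ is.
\item Separately, if $X$ is not uniformly discrete, two distinct nearest points at distance $<1$ are not constrained at all.
\end{itemize}

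\textbf{The missing idea.} You need to separate the set you delete from the set you exclude from the measure, and to damp $F$ rather than extend it. The paper does this as follows.
\begin{itemize}
\item It deletes $B_n=\N_n(Y_1\cup\cdots\cup Y_{n-1})$ from $Z_{j_n}$. It restricts the measure only to the complement of the wider finite set $A_n=\N_{L_n}(B_n)$.
\item $L_n$ is fixed before $j_n$ is chosen, so $\mu_{j_n}(A_n\times X)<1/4$ can be arranged.
\item It adds an anchor point $o_n\notin B_n$, with $L_n=1+\max_{b\in B_n}d(o_n,b)$, and normalises $F(o_n)=0$. Then $\|F(x)\|\le d(x,B_n)+L_n$.
\item It replaces $F$ by $G(x)=\min\{1,d(x,B_n)/L_n\}\,F(x)$, set to $0$ on $B_n$ and near $o_n$.
\item The cut-off varies at rate $1/L_n$ while $\|F\|=O(L_n)$ in the buffer $A_n\setminus B_n$. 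So $\|G(x)-G(y)\|\le 3d(x,y)$ whenever $d(x,y)\ge1$, with a constant independent of $n$.
\item Since $G=F$ off $A_n$, applying the Poincar\'e inequality for $\mu_{j_n}$ to $G/3$ gives it for the renormalised measure with constant $18k^2$.
\end{itemize}
The anchor is essential. $F$ is only determined up to a constant on the new set, which need not otherwise contain a point at distance $O(L_n)$ from $B_n$, and $L_n$ must be fixed before $j_n$ is known.
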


\begin{proof}
We only need to prove necessity. Assume that $X$ does not coarsely embed into Hilbert space.
By Proposition~\ref{prop:tessera-general}, there exist $k>0$, $r_j\to\infty$, finite subsets $Z_j\subseteq X$ and symmetric probability measures $\mu_j$ on $Z_j\times Z_j$ with support contained in $\{(x,y)\in Z_j^2:d(x,y)\geq r_j\}$ such that
\begin{equation}\label{eq:sparse-source-energy}
 \sum_{x,y\in Z_j}\|G(x)-G(y)\|^2\mu_j(x,y)\le k^2,
\end{equation}
for any Hilbert-space-valued map $G:Z_j\to \H$ satisfying $\|G(x) - G(y)\| \leq d(x,y)$ whenever $d(x,y) \geq 1$.
Extend each $\mu_j$ by zero to $X\times X$, still denoted by $\mu_j$.

First, claim that $\mu_j(A\times X)\to 0$ for every finite subset $A\subseteq X$.
Indeed, given $\varepsilon>0$, choose $R>0$ so that $k^2/R^2<\varepsilon$. Consider the $1$-Lipschitz function
\[
 f(z):=\min\{d(z,A),R\} \quad \text{for} \quad z\in X.
\]
Since $f\equiv 0$ on $A$ and $f \equiv R$ on $X\setminus \N_R(A)$, \eqref{eq:sparse-source-energy} gives
\[
 \mu_j\bigl(A\times(X\setminus \N_R(A))\bigr)\le \frac{k^2}{R^2}<\varepsilon.
\]
On the other hand, for any $(x,y) \in A\times \N_R(A)$ we have $d(x,y)\leq \diam(A)+R$. Hence, for sufficiently large $j$, the support condition of $\mu_j$ gives $\mu_j(A\times \N_R(A))=0$. Combining these estimates, we prove the claim.

We now construct $Y_n$ recursively. Choose $j_1$ with $r_{j_1} \geq 1$, and set $Y_1 = Z_{j_1}$ and $\nu_1:=\mu_{j_1}$. Suppose that $Y_1,\ldots,Y_{n-1}$ have been chosen for some $n \geq 2$, and we set $B_n:=\N_n(Y_1\cup\cdots\cup Y_{n-1})$, which is finite. Since $X$ is infinite, choose $o_n\in X\setminus B_n$ and define
\[
 L_n:=1+ \max_{b\in B_n} d(o_n,b)>0 \quad \text{and} \quad A_n:=\N_{L_n}(B_n).
\]
By the claim above, we choose $j_n \geq n$ sufficiently large that $\mu_{j_n}(A_n\times X)<1/4$. Hence, the symmetry of $\mu_{j_n}$ gives
\[
 q_n:=\mu_{j_n}\bigl((X\setminus A_n)^2\bigr)
 \ge 1-2\mu_{j_n}(A_n\times X)>\frac12.
\]
Now set
\begin{equation}\label{eq:sparse-buffer-choice}
 Y_n:=(Z_{j_n}\setminus B_n)\cup\{o_n\}
 \quad \text{and} \quad
 \nu_n:=q_n^{-1}\mu_{j_n}|_{(X\setminus A_n)^2}.
\end{equation}
Then $Y_n$ is finite, and we regard $\nu_n$ as a symmetric probability measure on $Y_n\times Y_n$ with support contained in $\{(x,y)\in Y_n^2:d(x,y)\ge r_{j_n}\}$.

Now we verify the Poincar\'e inequality. Let $F:Y_n\to \H$ be a Hilbert-space-valued map satisfying $\|F(x) - F(y)\| \leq d(x,y)$ whenever $d(x,y) \geq 1$. After translation, we assume $F(o_n)=0$. Set $t_x:=d(x,B_n)$ and $\theta_x:=\min\left\{1,\frac{t_x}{L_n}\right\}$ for $x\in Y_n$, and define a map $G:Z_{j_n}\to \H$ by
\[
 G(x):=
 \begin{cases}
  0,& \text{if } x\in Z_{j_n}\cap B_n, \text{ or }x\in Z_{j_n}\setminus B_n \text{ with } d(x,o_n) < 1;\\
  \theta_x \cdot F(x),& \text{if }x\in Z_{j_n}\setminus B_n \text{ with } d(x,o_n) \geq 1.
 \end{cases}
\]
We claim that $\|G(x) - G(y)\| \leq 3d(x,y)$ whenever $d(x,y) \geq 1$. To see this, fix $x,y\in Z_{j_n}$ with $d(x,y) \geq 1$. Note that when $x\in Z_{j_n}\setminus B_n$ and \(d(x,o_n)\ge1\), we have
\begin{equation}\label{eq:sparse-anchor-bound}
\|F(x)\| =\|F(x)-F(o_n)\| \le d(x,o_n) \le t_x+L_n.
\end{equation}
\textbf{Case I.} $x,y\in Z_{j_n}\setminus B_n$. If both \(G(x)\) and \(G(y)\) are zero, the estimate is trivial. Assume that \(G(x)\neq0\) and \(G(y)\neq0\). Without loss of generality, assume $t_x\le t_y$, so $\theta_x\le\theta_y$. Then
\[
\|G(x)-G(y)\| \leq \theta_y\|F(x)-F(y)\| +(\theta_y-\theta_x)\|F(x)\|.
\]
If $\theta_x=\theta_y$, the right-hand side is at most $d(x,y)$. If $\theta_x<\theta_y$, then $t_x<L_n$ and hence, \eqref{eq:sparse-anchor-bound} shows $\|F(x)\|\le2L_n$ and
$|\theta_x-\theta_y|\le d(x,y)/L_n$. Consequently,
\[
 \|G(x)-G(y)\|
 \le d(x,y)+\frac{d(x,y)}{L_n}\cdot 2L_n =3d(x,y).
\]
If exactly one of \(G(x)\) and \(G(y)\) is zero, by symmetry assume that \(G(x)=0\) and \(G(y)\neq0\). If $F(x)=0$, then $\|G(x)-G(y)\| \leq \theta_y\|F(x)-F(y)\|\leq \|F(x)-F(y)\|$ and the claim holds. Now we only need to consider the case when $F(x)\neq 0$. Since \(x\notin B_n\) and $F(x)\neq 0$, the definition of \(G\) implies $d(x,o_n)<1$. Since \(G(y)\neq0\), we have \(d(y,o_n)\ge1\) and
\[
\|G(y)\|
=\theta_y\|F(y)\|
\le \|F(y)-F(o_n)\|
\le d(y,o_n).
\]
Therefore, we have
\[
\|G(x)-G(y)\|
=\|G(y)\|
\le d(y,x)+d(x,o_n)
< d(x,y)+1
\le2d(x,y).
\]
\textbf{Case II. } $x\in Z_{j_n}\setminus B_n$ and $y\in Z_{j_n}\cap B_n$. We can assume $G(x) \neq 0$, otherwise the result holds trivially. Now \eqref{eq:sparse-anchor-bound} gives
\[
 \|G(x)-G(y)\|=\|G(x)\|
 \le\min\{1,t_x/L_n\} \cdot (t_x+L_n)
 \le2t_x\le2d(x,y).
\]
\textbf{Case III.} $x,y\in B_n$. Then $G(x) = G(y) = 0$. Hence, we prove the claim.

Now applying \eqref{eq:sparse-source-energy} to $G/3$ yields
\[
 \sum_{x,y\in Z_{j_n}}\|G(x)-G(y)\|^2\mu_{j_n}(x,y)\le9k^2.
\]
For \(x\in Z_{j_n}\setminus A_n\), we have \(t_x>L_n\) and hence, \(\theta_x=1\). Moreover, \(d(x,o_n)\ge1\) by the choice of $L_n$. Therefore, $G(x)=F(x)$ for any $x\in Z_{j_n}\setminus A_n$.
Since $\nu_n$ is supported on $(Z_{j_n}\setminus A_n)^2$, it follows that
\[
 \sum_{x,y\in Y_n}\|F(x)-F(y)\|^2\nu_n(x,y) =q_n^{-1}\! \sum_{x,y\in Z_{j_n}\setminus A_n} \|G(x)-G(y)\|^2\mu_{j_n}(x,y)\leq 9q_n^{-1}k^2<18k^2.
\]
Thus, $Y_n$ is a generalised $(3\sqrt2\,k,r_{j_n})$-expander.
Since $d(Y_n, Y_1 \cup \cdots \cup Y_{n-1}) \geq n$, we conclude the proof.
\end{proof}

For a space which is already presented as a coarse disjoint union, one can say more: the obstruction can be chosen along a subsequence of the prescribed components. We record this stronger refinement for later use.

\begin{prop}\label{prop:Tessera_generalised_expander}
Let $\{(X_n,d_n)\}_{n\in \NN}$ be a sequence of finite metric spaces and $(X,d)$ be their coarse disjoint union. Then $X$ can be coarsely embedded into Hilbert space \emph{if and only if} no subsequence $\{X_{n_k}\}_k$ forms generalised expanders.
\end{prop}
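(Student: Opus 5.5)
We prove both directions, using Proposition~\ref{prop:tessera-general} for one and Proposition~\ref{prop:sparse-reduction}, adapted to keep track of components, for the other. \textbf{Sufficiency of embeddability.} If $X$ coarsely embeds via $F$ with control functions $\rho_\pm$, then each $X_{n_k}$ coarsely embeds with the same control functions. Suppose some subsequence $\{X_{n_k}\}_k$ were generalised $(k_0,r_k)$-expanders with $r_k\to\infty$. By rescaling $F$ (using that $\rho_+(t)\le Ct$ for $t\ge 1$ may be arranged, as in Tessera's normalisation) we obtain maps satisfying the Lipschitz-at-scale-$1$ condition of Definition~\ref{defn:generalised expanders}. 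The Poincar\'e inequality \eqref{EQ:generalised expander} then gives $\rho_-(r_k)^2\le \sum\|F(x)-F(y)\|^2\mu_k(x,y)\le C^2k_0^2$, which is a contradiction as $r_k\to\infty$. Alternatively, this direction follows at once from Proposition~\ref{prop:tessera-general}, since each $X_{n_k}$ is a finite subset of $X$. So the real content is the converse.

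\textbf{Non-embeddability yields a subsequence.} Assume $X$ does not coarsely embed. By Proposition~\ref{prop:tessera-general} (or Proposition~\ref{prop:sparse-reduction}) there are finite $Z_j\subseteq X$ with measures $\mu_j$ satisfying \eqref{eq:sparse-source-energy} and $r_j\to\infty$. For each $j$, decompose $\mu_j=\sum_n \mu_j|_{(Z_j\cap X_n)^2}+\mu_j^{\mathrm{off}}$, where the off-diagonal part lives on pairs lying in different components. The first step is to show that $\mu_j^{\mathrm{off}}$ has small mass after discarding finitely many components. Fix $N$ and test against the $1$-Lipschitz-type map $G=R\cdot\mathbf 1$ applied componentwise: send $X_n$ to $R e_n$ for orthonormal $e_n$ with $n>N$, and send $X_1\cup\dots\cup X_N$ to $0$. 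For $R$ fixed, this satisfies $\|G(x)-G(y)\|\le d(x,y)$ for $x,y$ in components of index $>N'$ once $d(X_n,X_m)\ge \sqrt2R$ for $n\ne m$ beyond $N'$. The components with index $\le N'$ are finite, so by the claim in the proof of Proposition~\ref{prop:sparse-reduction} their $\mu_j$-mass tends to $0$. Hence the cross-component mass of $\mu_j$ is at most $k^2/R^2+o(1)$.

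\textbf{Selecting a single component.} With cross-component mass and early-component mass small, for large $j$ we have $\sum_n \mu_j((Z_j\cap X_n)^2)\ge 1/2$. Write $m_{j,n}$ for these masses. The main obstacle is that the mass may spread over many components, so no single $X_n$ need carry a definite proportion of it. To handle this, we use that the Poincar\'e inequality holds for every admissible $F$, and we choose $F$ componentwise independently, with orthogonal ranges that differ by large translations. Then
\[
\sum_n \sum_{x,y\in X_n}\|F_n(x)-F_n(y)\|^2\mu_j(x,y)\le k'^2
\]
holds simultaneously for all admissible families $(F_n)_n$. Normalising each restriction $\mu_j|_{X_n^2}/m_{j,n}$, we get that for each $n$ either the normalised measure satisfies a Poincar\'e constant $\le 2k'^2$ (a "good" $n$), or its sharp Poincar\'e constant $c_n^2$ exceeds $2k'^2$. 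If every $n$ with $m_{j,n}>0$ were bad, picking near-optimal $F_n$ for each would give $\sum_n m_{j,n}c_n^2\ge 2k'^2\cdot\tfrac12 = k'^2$, with strict inequality available after adjusting constants, which contradicts the bound. Thus some good $n=n_j$ exists. Since the support of $\mu_j$ avoids pairs at distance $<r_j$, each $X_{n_j}$ is a generalised $(\sqrt2k',r_j)$-expander. Moreover, $n_j\to\infty$, because components of index at most $N'$ carry vanishing mass and we may restrict to good $n>N'(j)$. Passing to a strictly increasing subsequence of $n_j$ finishes the proof. The delicate points are the bookkeeping of constants in this pigeonhole over components, and checking that the large-translation gluing of the $F_n$ remains admissible across components. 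The latter holds because $d(X_n,X_m)\to\infty$ and finitely many components can be handled by the vanishing-mass claim.
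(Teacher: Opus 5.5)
Your first direction (via Proposition~\ref{prop:tessera-general}) is fine. Your estimate of the cross-component mass, using the test map $x\mapsto Re_n$ together with the vanishing-mass claim, is also valid in the original metric. The pigeonhole over components is the same one the paper uses.

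The gap is in the gluing on which that pigeonhole depends. To get $\sum_n m_{j,n}c_n^2\le k'^2$, you need the following: for arbitrary admissible maps $F_n$ on the components meeting $Z_j$, a single map $F$ on $Z_j$ that restricts to translates of the $F_n$ and is still admissible. That means $\|F(x)-F(y)\|\le d(x,y)$ must also hold for $x\in X_n$, $y\in X_m$ with $n\neq m$.

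With orthogonal ranges this condition reads $\|F_n(x)\|^2+\|F_m(y)\|^2\le d(x,y)^2$. An admissible $F_n$ can spread $X_n$ over a set of diameter comparable to $\diam(X_n)$, and nothing stops the near-optimal ones from doing so. So you would need the separation $d(X_n,X_m)$ to dominate the diameters. The definition of a coarse disjoint union does not give this: $d(X_n,X_m)\to\infty$ may be arbitrarily slow compared with the growth of $\diam(X_n)$.

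A concrete illustration: take long parallel paths $\{0,\dots,L_n\}\times\{h_n\}$ in the $\ell^1$-plane, with $h_{n+1}-h_n\to\infty$ slowly and $L_n\gg h_{n+1}-h_n$. Send each path isometrically onto a segment in its own orthogonal direction. Whatever anchor points you choose, some pair of points at the same horizontal position violates admissibility.

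Adding large translations only increases $\|F(x)-F(y)\|$, so it makes things worse. The vanishing-mass claim does not help either. Admissibility must hold for every pair in $Z_j$ with $d(x,y)\ge1$, regardless of its $\mu_j$-mass, and the components carrying mass are exactly the ones you must glue.

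The missing idea is the paper's change of metric before invoking Proposition~\ref{prop:tessera-general}:
\begin{itemize}
\item Choose $a_n\ge n+2\diam(X_n)$ and set $d(x,y)=a_n+a_m$ for $x\in X_n$, $y\in X_m$, $n\neq m$.
\item This metric is coarsely equivalent to the original, so $X$ still fails to embed.
\item It leaves each $d_n$ unchanged, so generalised expanders found among the components in the new metric are generalised expanders in the original space.
\end{itemize}
Now anchor each $F_n$ at one endpoint $u_n$ of a diametral pair $(u_n,v_n)$, so $F_n(u_n)=0$. Admissibility is only imposed at scales $\ge1$, so points within distance $1$ of $u_n$ need a small extra step: pass through $v_n$, using $\diam(X_n)\ge r_j>2$. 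This gives $\|F_n\|\le2\diam(X_n)\le a_n$. The orthogonal gluing then satisfies $\|F(x)-F(y)\|\le\sqrt{a_n^2+a_m^2}\le a_n+a_m=d(x,y)$, which is exactly \eqref{EQ:pnKn}.

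In this metric the cross-component mass bound is also immediate. Test against $x\mapsto a_ne_n$ to get cross mass at most $2K_0^2/r_j^2$. The rest of your argument, including $n_j\to\infty$, then goes through.
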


%Note that the when $X$ cannot be coarsely embedded into Hilbert space, \cite[Theorem 4]{Tes09} shows the existence of a sequence of generalised expanders, coarsely embedded into $X$. Proposition~\ref{prop:Tessera generalised expander} is stronger in this direction, showing that these generalised expanders can be realised as a subsequence of the blocks.

\begin{proof}
If a subsequence \(\{X_{n_k}\}_k\) forms a sequence of generalised expanders, then \(X\) cannot coarsely embed into Hilbert space by Proposition~\ref{prop:tessera-general}.

Conversely, assume that \(X\) cannot be coarsely embedded into Hilbert space.
For each $n$, choose $a_n\geq n+2\diam(X_n)$. Replace the metric $d$ such that $d(x,y) = a_n + a_m$ for $x\in X_n$ and $y\in X_m$ with $n\neq m$, which retains the same coarse equivalence class. By Proposition~\ref{prop:tessera-general}, there are finite subsets \(Z_j\subseteq X\), \(r_j\to\infty\), \(K_0>0\) and symmetric probability measures \(\widetilde\mu_j\) on \(Z_j^2\) such that $\supp(\widetilde\mu_j)
 \subseteq\{(x,y):d(x,y)\ge r_j\}$ and
\begin{equation}\label{eq:tessera-global}
 \sum_{x,y} \|F(x)-F(y)\|^2\widetilde\mu_j(x,y) \leq K_0^2,
\end{equation}
for any Hilbert-space-valued map $F: Z_j \to \H$ satisfying $\|F(x) - F(y)\| \leq d(x,y)$ whenever $d(x,y) \geq 1$. After discarding finitely many \(j\), we may assume that $r_j>2$.

Let $\{e_n\}_{n}$ be the standard orthonormal basis of $\ell^2(\NN)$ and define $G: X \longrightarrow \ell^2(\NN)$ by $G(x):=a_n e_n$ for $x\in X_n$, which is $1$-Lipschitz by the choice of $d$. If $\widetilde\mu_j(x,y) \neq 0$ for some $x\in X_n$, $y\in X_m$ with $n\ne m$, then
\[
 \|G(x)-G(y)\|^2=a_n^2+a_m^2
 \geq\frac{(a_n+a_m)^2}{2} = \frac{d(x,y)^2}{2} \ge\frac{r_j^2}{2}.
\]
Applying \eqref{eq:tessera-global} to the map $G$ gives
\[
 \widetilde\mu_j\left(\bigcup_{n\ne m}X_n\times X_m\right)
 \le\frac{2K_0^2}{r_j^2}\longrightarrow 0 \quad \text{as} \quad j \to +\infty.
\]
After discarding finitely many $j$, we may therefore assume that
\begin{equation}\label{eq:withinmass}
 \sum_n\widetilde\mu_j(X_n\times X_n)\ge\frac12.
\end{equation}

Fix such a $j$, and write $p_n:=\widetilde\mu_j(X_n\times X_n)$. Since $\widetilde\mu_j$ has finite support, there are only finitely many $n$ such that $p_n>0$. For such $n$, set $\nu_n:=p_n^{-1} \cdot \widetilde\mu_j|_{X_n\times X_n}$ and we have $D_n:=\diam(X_n)\ge r_j>2$.
Define
\[
 P_n:=\sup_F\sum_{x,y\in X_n}
 \|F(x)-F(y)\|^2 \nu_n(x,y),
\]
where the supremum is taken over all Hilbert-space-valued maps $F:X_n\to H$ satisfying $\|F(x) - F(y)\| \leq d(x,y)$ whenever $d(x,y) \geq 1$. Since \(\nu_n\) is supported on pairs \((x,y)\) with \(d(x,y)\ge r_j>1\), we have $P_n\le \diam(X_n)^2<\infty$.

We claim:
\begin{equation}\label{EQ:pnKn}
\sum_n p_nP_n\le K_0^2.
\end{equation}
To see this, for each \(n\) with \(p_n>0\), fix a Hilbert-space-valued map $F_n:X_n\to \H_n$ satisfying $\|F_n(x) - F_n(y)\| \leq d(x,y)$ whenever $d(x,y) \geq 1$. Choose \(u_n,v_n\in X_n\) with $d(u_n,v_n)=D_n$. After translating, assume \(F_n(u_n)=0\). Given $x\in X_n$, if \(d(x,u_n)\ge1\), then
$\|F_n(x)\|\le d(x,u_n)\le D_n$. If \(d(x,u_n)<1\), then $d(x,v_n)>D_n-1>1$ and hence,
\[
\|F_n(x)\| \le \|F_n(x)-F_n(v_n)\|+\|F_n(v_n)\|\le d(x,v_n)+d(u_n,v_n)\le2D_n \leq a_n.
\]
Consider $F:X \to \bigoplus_n \H_n$ such that $F|_{X_n}:=F_n$ for those finitely many $n$ with $p_n>0$, and $0$ otherwise. By the choice of $d$, for \(x\in X_n\) and \(y\in X_m\) with \(n\neq m\), we have
\[
\|F(x)-F(y)\|
\le \sqrt{a_n^2+a_m^2}
\le a_n+a_m
=d(x,y).
\]
Therefore, $\|F(x) - F(y)\| \leq d(x,y)$ whenever $d(x,y) \geq 1$.
Applying \eqref{eq:tessera-global} to $F$, we deduce the following
\[
 K_0^2\ge
 \sum_n\sum_{x,y\in X_n}
 \|F_n(x)-F_n(y)\|^2 \widetilde\mu_j(x,y) = \sum_n p_n \sum_{x,y\in X_n}
 \|F_n(x)-F_n(y)\|^2 \nu_n(x,y).
\]
Since only finitely many \(p_n\) are non-zero, taking the supremum independently over each \(F_n\) proves the claim.

Combining \eqref{eq:withinmass} and \eqref{EQ:pnKn}, there exists $n=n(j) \in \NN$ with $p_n>0$ such that $P_n\leq 2K_0^2$. Set $\mu_j:=\nu_{n(j)}$, whose support is contained in $\big\{(x,y)\in X_{n(j)}^2:d(x,y)\ge r_j\big\}$.
Moreover, for any Hilbert space-valued map $F:X_{n(j)}\to \H$ with $\|F(x) - F(y)\| \leq d(x,y)$ whenever $d(x,y) \geq 1$, we have:
\[
 \sum_{x,y\in X_{n(j)}}
 \|F(x)-F(y)\|^2 \mu_j(x,y) \leq 2K_0^2
\]
Since $r_j \to \infty$ and $\mu_j(X_{n(j)}^2) = 1$, we obtain $n(j) \to +\infty$. Passing to a subsequence, we may assume that \(n(j)\) is strictly increasing. This finishes the proof.
\end{proof}

\subsection{Poincar\'e comparison and matrix obstructions}
The following comparison turns the Hilbert-space-valued Poincar\'e inequality into one between scalar quadratic forms, while controlling the distance cost of the dominating form.

\begin{lem}\label{lem:comparison}
Let $Y$ be a finite metric space with $|Y| \geq 2$, and $\mu$ be a symmetric probability measure on $Y\times Y$ with $\mu(x,x)=0$ for every $x\in Y$. Suppose that $C>0$ satisfies
\begin{equation}\label{eq:comparison-hyp}
 \sum_{x,y\in Y}\|F(x)-F(y)\|^2 \mu(x,y) \leq C
\end{equation}
for any $1$-Lipschitz Hilbert-space-valued map $F: Y \to \H$. Then there is a symmetric non-negative function $\eta$ on $Y \times Y$ with $\eta(x,x)=0$ for every $x\in Y$, such that
\begin{equation}\label{eq:second-moment}
 \sum_{x,y\in Y}d(x,y)^2\eta(x,y)\le C
\end{equation}
and for every $f:Y\to\CC$, we have
\begin{equation}\label{eq:dirichlet-domination}
 \sum_{x,y\in Y}|f(x)-f(y)|^2 \mu(x,y)
 \le
 \sum_{x,y\in Y}|f(x)-f(y)|^2 \eta(x,y).
\end{equation}
\end{lem}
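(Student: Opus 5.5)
The plan is to recast the hypothesis \eqref{eq:comparison-hyp} as a semidefinite program and to obtain $\eta$ as an optimal solution of the dual program. Write $Y=\{1,\dots,N\}$. For each pair $x\neq y$ let $L_{xy}$ be the real symmetric $N\times N$ matrix with $\langle L_{xy} f,f\rangle=|f(x)-f(y)|^2$, so $L_{xy}=(\delta_x-\delta_y)(\delta_x-\delta_y)^T$. Put $L_\mu:=\sum_{x,y}\mu(x,y)L_{xy}$. For a real positive semidefinite matrix $G$ write $\langle A,G\rangle=\Tr(AG)$.

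\textbf{Step 1: Gram matrices.} Every map $F:Y\to\H$ has a real Gram matrix $G_{xy}=\mathrm{Re}\langle F(x),F(y)\rangle$, which is positive semidefinite and satisfies $\langle L_{xy},G\rangle=\|F(x)-F(y)\|^2$. Conversely, every real positive semidefinite $G$ is the Gram matrix of some vectors in $\mathbb{R}^N$. Hence \eqref{eq:comparison-hyp} is equivalent to the statement that the primal program
\[
\sup\bigl\{\langle L_\mu,G\rangle : G\succeq 0,\ \langle L_{xy},G\rangle\le d(x,y)^2 \text{ for all } x\neq y\bigr\}
\]
has value at most $C$. Its Lagrangian dual is
\[
\inf\Bigl\{\sum_{x\neq y}\eta(x,y)\,d(x,y)^2 : \eta\ge 0,\ \sum_{x\neq y}\eta(x,y)L_{xy}-L_\mu\succeq 0\Bigr\}.
\]
Weak duality is routine. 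The dual constraint says exactly that \eqref{eq:dirichlet-domination} holds for all real $f$. For complex $f$ it then follows by splitting $f$ into real and imaginary parts, since both quadratic forms are sums of the corresponding forms applied to $\mathrm{Re} f$ and $\mathrm{Im} f$.

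\textbf{Step 2: strong duality with dual attainment.} This is the main obstacle. I would invoke Slater's condition for conic programs: if the primal is strictly feasible and its value is finite, then the dual optimum is attained and equals the primal value. For strict feasibility, take $G=\tau I$ with $\tau>0$ small. Then $G\succ 0$ and $\langle L_{xy},G\rangle=2\tau<d(x,y)^2$, since $d(x,y)>0$ for $x\neq y$ and $Y$ is finite. The primal value is finite because it is bounded by $C$. So there is an attained dual optimum $\eta$ with $\sum\eta\, d^2\le C$.

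If one prefers to avoid citing conic duality, the same conclusion can be reached by hand. Consider the convex cone
\[
\Bigl\{\bigl(\textstyle\sum\eta L_{xy}-L_\mu+P,\ \sum\eta\, d^2\bigr) : \eta\ge 0,\ P\succeq 0\Bigr\}.
\]
One shows that its closure contains a point $(0,s)$ with $s\le C$ by a Hahn--Banach separation argument, using the hypothesis. Closedness of the relevant set follows from compactness: the constraint $\sum\eta\, d^2\le C+1$ together with $d\ge \min_{x\ne y}d(x,y)>0$ bounds $\eta$.

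\textbf{Step 3: cleanup.} Replace $\eta$ by its symmetrisation $\frac12\bigl(\eta(x,y)+\eta(y,x)\bigr)$. This changes neither side of \eqref{eq:second-moment} or \eqref{eq:dirichlet-domination}, because $L_{xy}=L_{yx}$. Finally set $\eta(x,x)=0$. The properties \eqref{eq:second-moment} and \eqref{eq:dirichlet-domination} then hold as required.
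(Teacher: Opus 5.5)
Your proof is correct, and it reaches $\eta$ by a different duality theorem from the paper's. Both arguments rest on the same observation: the Hilbert-space Poincar\'e inequality is a statement about positive semidefinite Gram matrices, so $\eta$ is a dual certificate for a finite-dimensional convex program. The paper's characterisation of $\mathcal C$ via $G_\psi\ge0$ is exactly your Step~1.

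The paper, however, never writes down a Lagrangian dual or uses a constraint qualification. Instead it proceeds as follows.
\begin{enumerate}
\item It homogenises the hypothesis to $\sum\psi\mu\le C\max_{x\ne y}\psi(x,y)/d(x,y)^2$.
\item It rewrites that maximum as a maximum over the simplex $\mathcal P$ of symmetric probability measures off the diagonal.
\item It applies the finite-dimensional minimax theorem to the bi-affine function $\Lambda(\psi,p)$ on the compact convex set $\mathcal C\times\mathcal P$.
\item The resulting $p_0$ gives $\eta=Cp_0/d^2$ with $\sum d^2\eta=C$ exactly.
\item Domination is first obtained for $1$-Lipschitz $f$ and then extended by rescaling.
\end{enumerate}
Because both sets are compact, the paper needs only the elementary minimax theorem. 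Your route uses the heavier conic strong-duality theorem, with $\tau I$ supplying the strict feasibility that guarantees dual attainment; you verified this correctly. In return, domination for all real $f$ is literally the dual constraint, with no normalisation step.

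There is one slip in your optional ``by hand'' version of Step~2: the matrix component should be $\sum\eta L_{xy}-L_\mu-P$, not $+P$.
\begin{itemize}
\item As written, $(0,0)$ lies in the set trivially: take $\eta=0$ and $P=L_\mu$.
\item Membership of $(0,s)$ then encodes $\sum\eta L_{xy}\preceq L_\mu$, the reverse of what you need.
\item The set is also a translate of a cone rather than a cone.
\end{itemize}
With the sign fixed, the separation argument works. A separating functional with zero last coordinate is ruled out because $L_\mu$ lies in the cone spanned by the $L_{xy}$. Your compactness bound on $\eta$ then closes the argument. So the main line, which cites Slater, is unaffected.
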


\begin{proof}
Fix $o\in Y$. Let $\mathcal C$ be the set of kernels $\psi(x,y)=\|F(x)-F(y)\|^2$ arising from $1$-Lipschitz Hilbert-space-valued maps $F$ on $Y$. Write $Y=\{o,x_1,\ldots,x_m\}$ and for each kernel $\psi$, define
\[
 G_\psi(i,j):=\frac12\bigl(\psi(x_i,o)+\psi(x_j,o)-\psi(x_i,x_j)\bigr) \quad \text{for} \quad i,j=1,\cdots,m.
\]
Claim: Given a symmetric kernel $\psi$ on $Y$ with $\psi(x,x)=0$ and $0\le\psi(x,y)\le d(x,y)^2$ for any $x,y\in Y$, we have $\psi \in \mathcal C$ if and only if $G_\psi \geq 0$.

Indeed, first assume that $\psi \in \mathcal C$. Replacing $F$ by $F-F(o)$, we assume $F(o)=0$. After realifying the target Hilbert space if necessary, we compute directly that $G_\psi(i,j) = \langle F(x_i),F(x_j)\rangle$ and hence, $G_\psi\ge0$. Conversely, suppose that $G_\psi\ge0$. Then there exist vectors $v_1,\cdots,v_m$ in some real Hilbert space such that $\langle v_i,v_j\rangle=G_\psi(i,j)$. Define $F(o)=0$ and $F(x_i):=v_i$ for $i=1,\cdots, m$. A direct calculation shows that
\[
\|F(x_i)-F(x_j)\|^2 = G_\psi(i,i)+G_\psi(j,j)-2G_\psi(i,j)=\psi(x_i,x_j)
\]
and $\|F(x_i)-F(o)\|^2=\psi(x_i,o)$. Hence, we obtain $\psi \in \mathcal C$. Using this characterisation of $\mathcal C$, it is clear that $\mathcal C$ is compact and convex.

Let $\mathcal P$ be the set of symmetric probability measures on $Y^2$ which vanish on the diagonal. Then $\mathcal P$ is a  compact and convex subset of $\mathbb{R}^{Y \times Y}$. Moreover, we have
\[
 \max_{p\in\mathcal P}\sum_{x\ne y}p(x,y)\frac{\psi(x,y)}{d(x,y)^2}
 =\max_{x\ne y}\frac{\psi(x,y)}{d(x,y)^2}.
\]
To see this, first note that the left-hand side is at most the right-hand side. Conversely, if the
right-hand side is attained at $(x_0,y_0)$, then the measure $p$ defined by $p(x_0,y_0)=p(y_0,x_0)=\frac12$ and $p(x,y)=0$ otherwise, attains the same value.

Scaling \eqref{eq:comparison-hyp} and using the preceding identity give
\begin{equation}\label{eq:homogenized}
 \sum_{x,y}\psi(x,y) \mu(x,y)
 \le C\max_{x\ne y}\frac{\psi(x,y)}{d(x,y)^2} = C\max_{p\in\mathcal P}\sum_{x\ne y}p(x,y)\frac{\psi(x,y)}{d(x,y)^2}.
\end{equation}
For $\psi\in\mathcal C$ and $p\in\mathcal P$, define
\[
 \Lambda(\psi,p) :=C\sum_{x\ne y}p(x,y)\frac{\psi(x,y)}{d(x,y)^2} -\sum_{x,y}\mu(x,y)\psi(x,y).
\]
Then \eqref{eq:homogenized} implies that $\max_{p\in\mathcal P}\Lambda(\psi,p)\ge0$. The function $\Lambda$ is continuous and affine in each variable. Since $\mathcal C$ and $\mathcal P$ are compact and convex, the finite-dimensional
minimax theorem gives
\[
 \max_{p\in\mathcal P}\min_{\psi\in\mathcal C}\Lambda(\psi,p) =\min_{\psi\in\mathcal C}\max_{p\in\mathcal P}\Lambda(\psi,p)\ge0.
\]
Since the function $p\mapsto\min_{\psi\in\mathcal C}\Lambda(\psi,p)$ is continuous on $\mathcal P$, there exists $p_0\in\mathcal P$ such that $\min_{\psi\in\mathcal C}\Lambda(\psi,p_0)\geq 0$. Equivalently, $\Lambda(\psi,p_0)\ge0$ for any $\psi\in\mathcal C$.

Finally, we define
\begin{equation}\label{eq:eta-from-p}
 \eta(x,y)=
 \begin{cases}
 \displaystyle\frac{C\,p_0(x,y)}{d(x,y)^2},&x\ne y,\\[2mm]
 0,&x=y.
 \end{cases}
\end{equation}
Then $\eta$ is symmetric and non-negative, and \eqref{eq:second-moment} follows from:
\[
 \sum_{x,y}d(x,y)^2\eta(x,y)
 =C\sum_{x\ne y}p_0(x,y)=C.
\]
It remains to prove \eqref{eq:dirichlet-domination}. For a $1$-Lipschitz map $f:Y\to\CC$, consider the kernel $\psi_f(x,y):=|f(x)-f(y)|^2$, which belongs to $\mathcal C$. Then $\Lambda(\psi_f, p_0) \geq 0$ implies
\[
 \sum_{x,y}\mu(x,y)|f(x)-f(y)|^2 \le C\sum_{x\ne y}p_0(x,y) \frac{|f(x)-f(y)|^2}{d(x,y)^2}=\sum_{x,y}\eta(x,y)|f(x)-f(y)|^2.
\]
For arbitrary $f:Y\to\CC$, both sides of \eqref{eq:dirichlet-domination} are zero if $f$ is constant. Otherwise,
\[
 L_f:=\max_{x\ne y}\frac{|f(x)-f(y)|}{d(x,y)}
\]
is finite and positive, and $f/L_f$ is $1$-Lipschitz. Applying the preceding
inequality to $f/L_f$, we conclude \eqref{eq:dirichlet-domination} for $f$.
\end{proof}

The comparison produces the following matrix obstruction, where the two families $S_n,M_n$ have different roles. The matrices $S_n$ carry a uniformly non-trivial positive spectral part and admit uniform finite-propagation approximations, whereas the matrices $M_n$ are entry-wise non-negative with supports escaping to infinity. The inequality $S_n\le M_n$ links these two features and is crucial below.

\begin{prop}\label{prop:matrix-obstruction}
Let $(X,d)$ be a discrete metric space of bounded geometry which does not coarsely embed into Hilbert space. Then there exist a sparse subspace $Y=\bigsqcup_{n \in \NN} Y_n\subseteq X$, a constant $C\ge1$, a sequence $r_n\to\infty$, and self-adjoint matrices $S_n,M_n\in \B(\ell^2(Y_n))$ satisfying the following conditions:
\begin{enumerate}
\item $\|S_n\|\le2$ with the maximal eigenvalue $\lambda_{\max}(S_n)\ge C^{-1}$;
\item $S_n\le M_n$ and $\|M_n\|\le1$;
\item $(M_n)_{xy}\ge0$ for all $x,y\in Y_n$, and $(M_n)_{xy}=0$ whenever $d(x,y)<r_n$;
\item Setting $S_n^{(R)}$ to be the restriction of $S_n$ on $E_R$ for $R\geq 1$, we have $\|S_n-S_n^{(R)}\|\le R^{-2}$.
\end{enumerate}
In particular, $S=\mathrm{(SOT)}$-$\sum_{n} S_n$ belongs to $C_u^*(X)$.
\end{prop}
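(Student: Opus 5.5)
Apply Proposition~\ref{prop:sparse-reduction} to obtain a sparse sequence $Y=\bigsqcup_n Y_n$ of generalised $(k,r_n)$-expanders with measures $\mu_n$ supported on $\{d\ge r_n\}$, $r_n\to\infty$. Since $r_n\ge 1$ eventually, the hypothesis $\|F(x)-F(y)\|\le d(x,y)$ for $d(x,y)\ge1$ is implied by $1$-Lipschitz, so Lemma~\ref{lem:comparison} applies with $C=k^2$ and yields $\eta_n$ with $\sum d^2\eta_n\le k^2$ and Dirichlet domination $\mathcal E_{\mu_n}\le\mathcal E_{\eta_n}$. Write the forms as operators: for a symmetric nonnegative kernel $\kappa$ with zero diagonal, $\sum_{x,y}|f(x)-f(y)|^2\kappa(x,y)=2\langle (D_\kappa-\kappa)f,f\rangle$, where $D_\kappa$ is the diagonal degree operator. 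The inequality $D_{\mu}-\mu\le D_\eta-\eta$ rearranges to $D_\mu-D_\eta+\eta\le\mu$. So set, after normalising by a constant $c$ to be chosen, $M_n:=c\,\mu_n$ (entrywise nonnegative, vanishing where $d<r_n$) and $S_n:=c(D_{\mu_n}-D_{\eta_n}+\eta_n)$; then $S_n\le M_n$ and condition (3) hold.

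Norm bounds: $\mu_n$ is a symmetric probability measure, so $\|\mu_n\|\le$ max row sum $\le$ ... this is not bounded by $1$ automatically; instead I would rescale by degrees. A cleaner choice: normalise with $c=1$ but control $\|\mu_n\|\le\|\mu_n\|_{HS}\le 1$ (entries sum to $1$, so Hilbert–Schmidt norm $\le1$), giving $\|M_n\|\le1$. For $S_n$, $\|D_{\mu_n}\|\le1$, and $\|D_{\eta_n}\|,\|\eta_n\|\le \sum\eta_n\le k^2/r_{\min}^2$... but $\eta_n$ may charge short pairs, so bound instead by $\sum_{x\ne y}\eta_n(x,y)\le\sum d^2\eta_n\le k^2$ (using $d\ge1$, after rescaling the metric so distinct points are at distance $\ge1$, allowed by bounded geometry/uniform discreteness up to coarse equivalence). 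Then scaling by $c=\min\{1,1/(1+2k^2)\}$ handles $\|S_n\|\le2$. Propagation tail (4): the part of $S_n$ off $E_R$ is $c\,\eta_n\chi_{\{d>R\}}$ (the diagonal terms lie in $E_R$), and its norm is at most its $\ell^1$ mass $\le c\sum_{d>R}\eta_n\le cR^{-2}\sum d^2\eta_n\le ck^2R^{-2}\le R^{-2}$.

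The main obstacle is (1), the lower bound on $\lambda_{\max}(S_n)$ uniformly. Test on $\delta$'s: $\langle S_n\delta_x,\delta_x\rangle=c(\mu_n(x,Y_n)-\eta_n(x,Y_n))$, and $\sum_x\mu_n(x,Y_n)=1$ while $\sum_x \eta_n(x,Y_n)$ is only $\le k^2$, so traces don't directly help. Instead use $\mathrm{Tr}$ against the constant-free structure: $\mathrm{Tr}(S_n)=c(1-\sum\eta_n+0)$ fails if $\sum\eta_n\ge1$. So I would first replace the Poincaré constant: apply Lemma~\ref{lem:comparison} not to $\mu_n$ but observe that $\eta_n$ near the diagonal is harmless while $\mu_n$ lives at distance $\ge r_n$. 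Better test vector: since $\mu_n\ge0$ with mass $1$, its top eigenvalue is $\ge\langle \mu_n v,v\rangle$; pick $v$ supported where $D_{\mu_n}$ is large. I expect one needs to use the expander inequality directly with $F$ a function taking values $\pm$ to show $\mathcal E_{\mu_n}$ dominates a spectral gap, giving $\lambda_{\max}(D_{\mu_n}-\mu_n)\gtrsim$ const after degree normalisation; hence one may need to normalise by degrees (work with $D^{-1/2}\mu D^{-1/2}$) rather than $c$, with bounded geometry giving uniform comparisons. Finally $S=\sum S_n$ lies in $C^*_u(X)$ by (4) and $\|S_n\|\le2$, since $\sum_n S_n^{(R)}$ has finite propagation and approximates $S$ within $R^{-2}$.
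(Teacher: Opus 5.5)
Your verification of $S_n\le M_n$, (3), the norm bounds, the tail estimate (4) and the final membership $S\in C^*_u(X)$ is fine for your choice. The gap is condition (1), and it is worse than unresolved: for $M_n=c\,\mu_n$ and $S_n=c(D_{\mu_n}-D_{\eta_n}+\eta_n)$ with any fixed scalar $c$, condition (1) is false. Indeed, $S_n\le M_n$ gives $\lambda_{\max}(S_n)\le\lambda_{\max}(M_n)\le c\,\|\mu_n\|\le c\max_x m_n(x)$, where $m_n(x)=\sum_y\mu_n(x,y)$ (this is the Schur test for a symmetric non-negative matrix). Now apply the expander inequality to the $1$-Lipschitz function $f=\min\{d(\cdot,x),r_n\}$. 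It vanishes at $x$ and equals $r_n$ at every $y$ with $\mu_n(x,y)\neq0$, so $r_n^2\,m_n(x)\le k^2$. Hence $\max_x m_n(x)\le k^2/r_n^2\to0$, and so $\lambda_{\max}(S_n)\to0$. The Hilbert--Schmidt bound you used for $\|M_n\|\le1$ is far from sharp, and the smallness of $\|\mu_n\|$ is exactly what destroys the spectral lower bound.

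The missing idea is a diagonal, non-scalar normalisation by the second-moment weight $h_n(x):=\sum_y d(x,y)^2\eta_n(x,y)$. Write $c_n(x)=\sum_y\eta_n(x,y)$, let $A_{\mu_n},A_{\eta_n}$ be the kernel matrices, and let $D_f$ be the diagonal operator with entries $f$. Testing the Dirichlet domination on $f=\chi_{\{x\}}$, and using $d\ge1$ off the diagonal, gives $m_n\le c_n\le h_n$ pointwise. It also gives $1\le Q_n:=\sum_x h_n(x)\le k^2$. Work on $Y_n:=\{h_n>0\}$, which carries both $\mu_n$ and $\eta_n$, and set $M_n:=D_{h_n}^{-1/2}A_{\mu_n}D_{h_n}^{-1/2}$ and $S_n:=D_{h_n}^{-1/2}(D_{m_n}-D_{c_n}+A_{\eta_n})D_{h_n}^{-1/2}$.

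Congruence preserves $S_n\le M_n$ and (3). The weighted Schur test with weights $\sqrt{h_n}$ gives $\|M_n\|\le1$ and $\|S_n\|\le2$, since the diagonal $(m_n-c_n)/h_n$ lies in $[-1,0]$. It also gives $\|S_n-S_n^{(R)}\|\le R^{-2}$, from $\sum_{d(x,y)>R}\eta_n(x,y)\le R^{-2}h_n(x)$.

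The decisive step is the test vector $u_n:=\sqrt{h_n/Q_n}$. It is a unit vector with $D_{h_n}^{-1/2}u_n=Q_n^{-1/2}\mathbf 1$. Since $(D_{c_n}-A_{\eta_n})\mathbf 1=0$, you get $\langle u_n,S_nu_n\rangle=Q_n^{-1}\sum_x m_n(x)=Q_n^{-1}\ge k^{-2}$.

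Your closing suggestion to normalise by the degrees of $\mu_n$ would not work. The kernel $\eta_n$ may charge points where $m_n$ is tiny or zero, so the diagonal $(m_n-c_n)/m_n$ becomes unbounded. Normalising by $c_n$ instead does give (1)--(3), but then (4) no longer follows: a single long pair carrying a tiny $\eta_n$-mass produces an entry of size one. The weight $h_n$ is what makes all four conditions hold at once.
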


We isolate the following estimate needed in the proof:

\begin{lem}\label{lem:weighted Schur test}
For a finite set $Y$, a symmetric matrix $K$ on $Y$ with non-negative entries and weights $w_x>0$ such that $\sum_y K_{xy}w_y\leq a w_x$, we have $\|K\|\le a$.
\end{lem}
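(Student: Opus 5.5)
The statement to prove is Lemma~\ref{lem:weighted Schur test}, the weighted Schur test: for a symmetric matrix $K$ on a finite set $Y$ with non-negative entries and weights $w_x>0$ satisfying $\sum_y K_{xy}w_y\le a w_x$, we have $\|K\|\le a$. I would prove the bilinear estimate $|\langle K f,g\rangle|\le a\|f\|\|g\|$ for all $f,g\in\ell^2(Y)$ directly, with the Cauchy--Schwarz inequality applied to a weighted splitting of each entry.

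Since $K_{xy}\ge0$, it suffices to treat $f,g\ge 0$: replacing $f,g$ by $|f|,|g|$ only increases $|\langle Kf,g\rangle|$ and leaves the norms unchanged. The key step is to write
\[
K_{xy}f(y)g(x)=\Bigl(K_{xy}^{1/2}\bigl(w_y/w_x\bigr)^{1/2}g(x)\Bigr)\Bigl(K_{xy}^{1/2}\bigl(w_x/w_y\bigr)^{1/2}f(y)\Bigr)
\]
and apply Cauchy--Schwarz to the double sum over $(x,y)$. This gives
\[
\langle Kf,g\rangle\le\Bigl(\sum_{x}g(x)^2\,w_x^{-1}\sum_y K_{xy}w_y\Bigr)^{1/2}\Bigl(\sum_{y}f(y)^2\,w_y^{-1}\sum_x K_{xy}w_x\Bigr)^{1/2}.
\]

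In the first factor, the hypothesis gives $\sum_y K_{xy}w_y\le a w_x$, so it is at most $(a\|g\|^2)^{1/2}$. In the second factor, symmetry of $K$ turns $\sum_x K_{xy}w_x$ into $\sum_x K_{yx}w_x\le a w_y$, so it is at most $(a\|f\|^2)^{1/2}$. Multiplying, $|\langle Kf,g\rangle|\le a\|f\|\|g\|$, and hence $\|K\|\le a$.

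There is no real obstacle here. The only point needing care is the use of symmetry to control the column sums, which is the step that bounds the second factor.
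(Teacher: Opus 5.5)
Your proposal is correct and is essentially the paper's proof. The paper also bounds $|\langle\xi,K\zeta\rangle|$ by $\sum_{x,y}K_{xy}|\xi_x||\zeta_y|$ and then applies Cauchy--Schwarz with the weights $w_y/w_x$ and $w_x/w_y$. In that argument, the hypothesis controls the row sums and symmetry of $K$ controls the column sums, exactly as you do.
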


\begin{proof}
For any $\xi, \zeta \in \ell^2(Y)$, we have
\[
 |\langle\xi,K\zeta\rangle| \leq \sum_{x,y}K_{xy}|\xi_x|\cdot |\zeta_y| \leq\left(\sum_{x,y}K_{xy}\frac{w_y}{w_x}|\xi_x|^2\right)^{1/2}\left(\sum_{x,y}K_{xy}\frac{w_x}{w_y}|\zeta_y|^2\right)^{1/2}
 \leq a\|\xi\|\cdot\|\zeta\|.
\]
This shows that $\|K\|\le a$.
\end{proof}

\begin{proof}[Proof of Proposition~\ref{prop:matrix-obstruction}]
Replacing \(d\) if necessary, by the coarsely equivalent metric
\[
\widetilde d(x,y)=
\begin{cases}
0,& \text{if }x=y,\\
\max\{d(x,y),1\},& \text{if }x\ne y,
\end{cases}
\]
we may assume throughout this proof that \(d(x,y)\ge1\) for \(x\ne y\). This replacement does not change the coarse structure and the uniform Roe algebra.

By Proposition~\ref{prop:sparse-reduction}, choose a sparse subspace $Z=\bigsqcup_n Z_n\subseteq X$ whose components form a sequence of generalised $(k,r_n)$-expanders for some $k\geq 1$ and $r_n \to +\infty$, witnessed by measures $\mu_n$ on $Z_n \times Z_n$. Since $d(x,y) \geq 1$ for $x\neq y\in X$, we have
\[
\sum_{x,y\in Z_n}\|F(x)-F(y)\|^2 \mu_n(x,y) \leq k^2
\]
for any $1$-Lipschitz Hilbert-space-valued map $F: Z_n \to \H$. After discarding finitely many terms, we may also assume \(r_n>1\) in the sequel, so the conclusions obtained below are unchanged for the original metric.

Applying Lemma~\ref{lem:comparison} to each $Z_n$ with constant $C:=k^2$, we obtain symmetric non-negative functions $\eta_n$ on $Z_n \times Z_n$ satisfying \eqref{eq:second-moment} and \eqref{eq:dirichlet-domination}. For $x\in Z_n$, set
\[
 m_n(x):=\sum_{y\in Z_n}\mu_n(x,y), \quad c_n(x):=\sum_{y \in Z_n}\eta_n(x,y) \quad \text{and} \quad h_n(x):=\sum_{y\in Z_n} d(x,y)^2\eta_n(x,y).
\]
Taking $f=\chi_{\{x\}}$ in \eqref{eq:dirichlet-domination} gives $m_n(x)\leq c_n(x)$. By the choice of the metric, we obtain
\begin{equation}\label{eq:weights}
 0\le m_n\le c_n\le h_n \quad \text{and} \quad 1\leq Q_n:=\sum_xh_n(x)\le C.
\end{equation}

For each $n$, set $Y_n:=\{x \in Z_n: h_n(x)>0\}$. It follows from \eqref{eq:weights} that on $Z_n \setminus Y_n$, $m_n(x) = c_n(x) = 0$, so $\mu_n$ and $\eta_n$ are supported on $Y_n \times Y_n$. Given $f: Y_n \to \CC$, denote by $D_f$ the diagonal operator on $\ell^2(Y_n)$. Define the following $Y_n$-by-$Y_n$ matrices:
\[
 A_{\mu_n}:=\left(\mu_n(x,y)\right)_{x,y\in Y_n}, \quad A_{\eta_n}=\left(\eta_n(x,y)\right)_{x,y\in Y_n}
\]
and
\[
L_{\mu_n}:=D_{m_n} - A_{\mu_n}, \quad L_{\eta_n}:=D_{c_n}-A_{\eta_n}.
\]
Also define:
\begin{equation*}\label{eq:matrix-form}
 M_n=D_{h_n}^{-1/2}A_{\mu_n}D_{h_n}^{-1/2} \quad \text{and} \quad
 S_n=D_{h_n}^{-1/2}(D_{m_n}-L_{\eta_n})D_{h_n}^{-1/2}.
\end{equation*}

For the weight $w_x=\sqrt{h_n(x)}$ on $Y_n$, the matrix $M_n$ and the off-diagonal part $S'_n$ of $S_n$, we have
\[
 \sum_y(M_n)_{xy}w_y
 =\frac{m_n(x)}{h_n(x)}w_x\le w_x
 \quad \text{and} \quad
 \sum_y(S'_n)_{xy}w_y
 =\frac{c_n(x)}{h_n(x)}w_x\le w_x.
\]
Applying Lemma~\ref{lem:weighted Schur test}, we obtain $\|M_n\|\le1$ and $\|S'_n\|\le1$. Since $M_n$ is self-adjoint, we have $M_n \leq \Id_{\ell^2(Y_n)}$. The diagonal part of $S_n$ has norm at most $1$ by \eqref{eq:weights} and hence, $\|S_n\|\le2$. On the other hand, for each $R>0$, set $S_n^{(R)}$ to be the restriction of $S_n$ on the $R$-diagonal of $Y_n\times Y_n$. Since
\[
\sum_y(S_n - S_n^{(R)})_{xy}w_y = \sum_{y:d(x,y)>R}(S'_n)_{xy}w_y
 =\frac1{\sqrt{h_n(x)}}\sum_{y:d(x,y)>R}\eta_n(x,y)
 \le R^{-2}w_x,
\]
the fact above implies that $\|S_n - S_n^{(R)}\| \leq R^{-2}$. Hence, $S:=\mathrm{(SOT)}$-$\sum_n S_n \in C^*_u(X)$.

Since $L_{\mu_n}, L_{\eta_n}$ are symmetric, \eqref{eq:dirichlet-domination} implies $L_{\mu_n} \leq L_{\eta_n}$ and hence, $D_{m_n}-L_{\eta_n}\le A_{\mu_n}$. This implies $S_n \leq M_n$. Define a unit vector $u_n \in \ell^2(Y_n)$ by $u_n(x):=\sqrt{h_n(x)/Q_n}$. Then we have
\[
\langle u_n,S_nu_n\rangle = \frac1{Q_n}\left(\sum_x(m_n(x)-c_n(x))
                     +\sum_{x\ne y}\eta_n(x,y)\right)=\frac1{Q_n}\ge C^{-1}.
\]
This finishes the proof.
\end{proof}

\section{Cut decompositions and Schur approximation}\label{sec:cuts}

%This section is devoted to another key tool coming from coarse embedding. More precisely, we show a cut decomposition of the Hilbert metric, and use the associated cuts to control Schur multiplication on matrix coefficients.

The purpose of this section is to establish a quantitative Schur approximation associated to a coarse embedding. The main geometric input is a finite cut decomposition of the Hilbert metric. This is the second key tool needed in the proof of our characterisations.

%\subsection{Cut decompositions of Hilbert metrics}\label{subsec:cut-tools}

We first fix some notation. A \emph{cut} of a finite set $V$ is a partition $V=C\sqcup C^c$, represented by a subset $C\subseteq V$. Associated to a cut $C$, define the \emph{crossing set} by
\begin{equation}\label{EQ:DC}
 D_C:=(C\times C^c)\cup(C^c\times C)\subseteq V^2,
\end{equation}
consisting of pairs of points which lie on opposite sides of the cut $C$. Hence for $u,v\in V$, we have $\chi_{D_C}(u,v)=|\chi_C(u)-\chi_C(v)|$.
%When a family of cuts $\{C_j\}_j$ is under consideration, we simply write $D_j:=D_{C_j}$.

The following lemma is the basic geometric input, which produces a convex decomposition of the Hilbert metric using cut representation.

\begin{lem}\label{lem:cuts}
For a finite subset $V$ of a Hilbert space $\H$ and $t>0$, there exist finitely many maps $c_1, \cdots, c_N: V \to \{-1,1\}$ and weights $\lambda_j \geq 0$ with $\sum_j\lambda_j=1$ such that
\[
\sum_{j=1}^N\lambda_jc_j(u)c_j(v) =e^{-t\|u-v\|} \quad \text{for any} \quad u,v \in V.
\]
\end{lem}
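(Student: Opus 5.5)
The strategy is to realise the kernel $e^{-t\|u-v\|}$ on $V$ as an average of products $c(u)c(v)$ with $c$ taking values $\pm1$. We do this in two steps: first write $e^{-t\|u-v\|}$ as an average of kernels $e^{-\delta(u,v)}$, where $\delta$ is an $\ell^1$-type (cut) metric; then write each such exponential as an average of cut kernels $c(u)c(v)$. Since $V$ is finite, any probability integral over cut kernels reduces to a finite convex combination: the set of kernels $\{c(u)c(v)\}$ is finite, namely at most $2^{|V|}$ of them. So it suffices to produce a probability measure on functions $c:V\to\{-1,1\}$ with the right expectation of $c(u)c(v)$.

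\textbf{Step 1: the Hilbert metric is a mixture of cut metrics.} Take a random Gaussian direction $g$ in $\mathrm{span}(V)$, which is finite dimensional, and a random threshold. Using the half-space cuts $C_{g,s}=\{v:\langle g,v\rangle> s\}$ one gets the Crofton-type identity $\|u-v\|=\kappa\,\mathbb E_g|\langle g,u-v\rangle|$, and $|\langle g,u\rangle-\langle g,v\rangle|=\int_{\RR}\chi_{D_{C_{g,s}}}(u,v)\,ds$. This is the standard fact that Euclidean metrics embed isometrically into $L^1$, hence are cut metrics.

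\textbf{Step 2: exponentiate using a Poisson process of cuts.} Rather than exponentiating the average directly, which does not factor, use a Poisson point process. Fix the measure space $\Omega$ of cuts (pairs $(g,s)$) with the $\sigma$-finite measure $m=t\kappa\,\gamma(dg)\otimes ds$, restricted to cuts that separate some pair of points of $V$. This restriction has finite total mass, because $V$ is bounded and thresholds $s$ outside the range of $\langle g,V\rangle$ separate nothing. Let $\Pi$ be a Poisson process on $\Omega$ with intensity $m$, and define $c(v):=\prod_{\omega\in\Pi}(-1)^{\chi_{C_\omega}(v)}\in\{-1,1\}$, which is a finite product almost surely. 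Then $c(u)c(v)=(-1)^{N(u,v)}$, where $N(u,v)$ is the number of points of $\Pi$ whose cut separates $u$ and $v$. This $N(u,v)$ is Poisson with mean $m(\{\omega:(u,v)\in D_{C_\omega}\})=t\|u-v\|$. Hence $\mathbb E[c(u)c(v)]=e^{-2t\|u-v\|}$, using $\mathbb E(-1)^N=e^{-2\lambda}$ for $N$ Poisson with mean $\lambda$. Replacing $t$ by $t/2$ gives exactly $e^{-t\|u-v\|}$.

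\textbf{Step 3: reduce to a finite sum.} Group the outcomes according to the resulting map $c:V\to\{-1,1\}$. There are finitely many such maps $c_1,\dots,c_N$, and we set $\lambda_j:=\mathbb P(c=c_j)$. These weights are non-negative and sum to $1$, and they yield the desired identity.

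The main obstacle is Step 1, namely verifying the $L^1$ representation with the correct normalising constant and the measurability and finiteness of the restricted intensity. A cleaner alternative avoids explicit constants. Map $V$ isometrically into $\RR^k$ and use the rotation-invariant measure on unit directions $\theta$, for which $\|x\|=\kappa_k\int_{S^{k-1}}|\langle\theta,x\rangle|\,d\theta$. Then restrict thresholds $s$ to the bounded interval $[-\max_{v\in V}\|v\|,\max_{v\in V}\|v\|]$, which gives finite total mass automatically.
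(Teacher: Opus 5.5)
Your proof is correct and is essentially the paper's argument. The paper likewise realifies, uses the rotation-invariant measure on $S^{d-1}$ with a layer-cake formula to write $\|u-v\|=\sum_\ell a_\ell\,\chi_{D_{E_\ell}}(u,v)$ as a finite non-negative combination of cut metrics, and then flips each cut $E_\ell$ independently with probability $p_\ell=(1-e^{-ta_\ell})/2$; this is exactly the law of the parity of your Poisson count on the parameters producing $E_\ell$ (after $t\mapsto t/2$, with counts on disjoint sets independent). The difference is only the order of operations: you exponentiate over the continuum of cuts and group at the end, which needs the Poisson-process setup and the finite-mass check, whereas the paper groups into finitely many cuts first so the product identity is a finite computation (and for complex $\H$ your half-spaces $\{\langle g,v\rangle>s\}$ require realifying, as in your alternative).
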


\begin{proof}
Let us realify $\H$, translate one point in $V$ to the origin, and identify the linear span of $V$ with $\mathbb{R}^d$. The case $d=0$ holds trivially, and we assume $d\geq 1$.
Let $\sigma$ be the probability Lebesgue measure on $S^{d-1}$, $e_1=(1,0,\cdots,0) \in S^{d-1}$ and
\[
 c_d=\left(\int_{S^{d-1}} |\langle e_1, \vartheta\rangle| \d\sigma(\vartheta)\right)^{-1}.
\]
By rotation invariance and homogeneity, for any $w\in\mathbb{R}^d$ we have
\begin{equation}\label{EQ:norm of w}
 c_d\int_{S^{d-1}}|\langle w,\vartheta\rangle|\d\sigma(\vartheta)=\|w\|.
\end{equation}
By the layer-cake identity, for any $u,v \in \H$ and $\vartheta \in S^{d-1}$ we have
\[
|\langle u-v,\vartheta\rangle| = \int_{\mathbb{R}} \left| \chi_{(-\infty,\langle u,\vartheta\rangle)}(\alpha) - \chi_{(-\infty,\langle v,\vartheta\rangle)}(\alpha)\right|\d\alpha.
\]
Combining with \eqref{EQ:norm of w}, we obtain:
\[
 \|u-v\|= c_d \int_{S^{d-1}} \int_{\mathbb{R}} \left| \chi_{(-\infty,\langle u,\vartheta\rangle)}(\alpha) -
 \chi_{(-\infty,\langle v,\vartheta\rangle)}(\alpha) \right| \d\alpha\d\sigma(\vartheta).
\]

For each $\vartheta\in S^{d-1}$ and $\alpha\in\mathbb{R}$, set $E_{\vartheta,\alpha}:=\{z\in V:\langle z,\vartheta\rangle>\alpha\}$.
Then we have $\chi_{(-\infty,\langle z,\vartheta\rangle)}(\alpha) = \chi_{E_{\vartheta,\alpha}}(z)$. Thus each pair $(\vartheta,\alpha)$ determines a cut $V=E_{\vartheta,\alpha}\sqcup (V \setminus E_{\vartheta,\alpha})$.
Since $V$ is finite, only finitely many distinct cuts occur. Grouping together all $(\vartheta,\alpha)$ which determine the same cut, we obtain subsets $E_1,\cdots,E_m\subseteq V$ and numbers $a_1,\cdots,a_m\ge0$ such that
\begin{equation}\label{eq:l1cuts}
 \|u-v\| = \sum_{\ell=1}^m a_\ell \left| \chi_{E_\ell}(u)-\chi_{E_\ell}(v) \right| = \sum_{\ell=1}^m a_\ell \chi_{D_{E_\ell}}(u,v).
\end{equation}

For $\ell=1,\cdots, m$, set $p_\ell:=(1-e^{-t a_\ell})/2$. For $\eta=(\eta_1,\cdots,\eta_m)\in\{0,1\}^m$, define
\[
 c_\eta(u) := (-1)^{\sum_{\ell=1}^m \eta_\ell\chi_{E_\ell}(u)} = \prod_{\ell=1}^m (-1)^{\eta_\ell\chi_{E_\ell}(u)} \quad \text{and} \quad
 \lambda_\eta:=\prod_{\ell=1}^m p_\ell^{\eta_\ell}(1-p_\ell)^{1-\eta_\ell}.
\]
Clearly each $\lambda_\eta \geq 0$, and
\[
 \sum_{\eta\in\{0,1\}^m}\lambda_\eta =
 \prod_{\ell=1}^m \left( \sum_{\eta_\ell=0}^1 p_\ell^{\eta_\ell}(1-p_\ell)^{1-\eta_\ell} \right) =
 \prod_{\ell=1}^m\bigl(p_\ell+(1-p_\ell)\bigr) =1.
\]
For each $\ell =1,\cdots,m$ and $u,v\in V$, we have
\begin{equation}\label{EQ:p ell}
 (1-p_\ell) + p_\ell (-1)^{\left|\chi_{E_\ell}(u)-\chi_{E_\ell}(v)\right|} =
 \exp\left(-ta_\ell \left|\chi_{E_\ell}(u)-\chi_{E_\ell}(v)\right|\right).
\end{equation}
Indeed, if $u$ and $v$ lie on the same side of the cut $E_\ell$, both sides equal $1$. If they lie on opposite sides, the left-hand side is $1-2p_\ell=e^{-ta_\ell}$. Therefore, we have
\begin{align*}\label{EQ:cut estimate compute}
\sum_{\eta\in\{0,1\}^m}
\lambda_\eta c_\eta(u)c_\eta(v)
&=
\prod_{\ell=1}^m \left(\sum_{\eta_\ell=0}^1p_\ell^{\eta_\ell}(1-p_\ell)^{1-\eta_\ell} (-1)^{\eta_\ell(\chi_{E_\ell}(u)+\chi_{E_\ell}(v))} \right) \notag\\
&=
 \prod_{\ell=1}^m\left( (1-p_\ell) + p_\ell(-1)^{\left|\chi_{E_\ell}(u)-\chi_{E_\ell}(v)\right|}\right) \notag\\
&=
 \exp\left(-t \sum_{\ell=1}^m a_\ell |\chi_{E_\ell}(u)-\chi_{E_\ell}(v)|\right) \notag\\
&=
 e^{-t\|u-v\|},
\end{align*}
where the penultimate equality comes from \eqref{EQ:p ell} and the last from \eqref{eq:l1cuts}.
%Finally, define the cut $C_\eta:=\{u\in V:c_\eta(u)=1\}$ and denote its crossing set by $D_\eta:=D_{C_\eta}$. Then $c_\eta(u) c_\eta(v)=1-2\chi_{D_\eta}(u,v)$ and hence, we conclude the proof by the computation above.
\end{proof}

Given $t>0$, define the function $q_t: [0,\infty) \to [0,\infty)$ as follows:
\begin{equation}\label{EQ:function qt}
 q_t(s):=\frac{1-e^{-ts}}{2}\quad \text{for} \quad s \geq 0.
\end{equation}
Translating Lemma~\ref{lem:cuts} into the language of cuts, we obtain the following:

\begin{cor}\label{cor:cuts}
For a finite subset $V$ of a Hilbert space $\H$ and $t>0$, there exist finitely many cuts $C_1,\cdots,C_N$ of $V$ and weights $\lambda_j \geq 0$ with $\sum_j\lambda_j=1$ such that
\[
\sum_{j=1}^N\lambda_j\chi_{D_{C_j}}(u,v)=q_t(\|u-v\|) \quad \text{for any} \quad u,v \in V.
\]
\end{cor}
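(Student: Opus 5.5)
The plan is to deduce the statement directly from Lemma~\ref{lem:cuts}. Write $c_1,\cdots,c_N\colon V\to\{-1,1\}$ and $\lambda_1,\cdots,\lambda_N\ge 0$ with $\sum_j\lambda_j=1$ for the maps and weights it provides. For each $j$, define the cut
\[
C_j:=\{u\in V: c_j(u)=1\}.
\]
This determines a partition $V=C_j\sqcup C_j^c$, with crossing set $D_{C_j}$ as in \eqref{EQ:DC}. We keep the same weights $\lambda_j$.

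The key identity is pointwise. For $u,v\in V$, the product $c_j(u)c_j(v)$ equals $1$ when $u,v$ lie on the same side of $C_j$, and $-1$ otherwise. Hence
\[
c_j(u)c_j(v)=1-2\chi_{D_{C_j}}(u,v).
\]
Multiplying by $\lambda_j$, summing over $j$, and using $\sum_j\lambda_j=1$ together with Lemma~\ref{lem:cuts}, we get
\[
1-2\sum_{j=1}^N\lambda_j\chi_{D_{C_j}}(u,v)=e^{-t\|u-v\|}.
\]
Rearranging gives
\[
\sum_j\lambda_j\chi_{D_{C_j}}(u,v)=\frac{1-e^{-t\|u-v\|}}{2}=q_t(\|u-v\|),
\]
by the definition \eqref{EQ:function qt}.

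There is essentially no obstacle here. The only point to check is the degenerate case where some $c_j$ is constant, so that $C_j$ is $V$ or $\emptyset$. This case is harmless: the crossing set is then empty and $c_j(u)c_j(v)=1$, which is consistent with the identity above. Such cuts can therefore be kept, or discarded together with their weights after renormalising; keeping them is simpler.
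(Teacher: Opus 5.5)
Your proof is correct and is exactly the paper's argument: set $C_j:=c_j^{-1}(\{1\})$ for the maps from Lemma~\ref{lem:cuts}, use $c_j(u)c_j(v)=1-2\chi_{D_{C_j}}(u,v)$, and rearrange using $\sum_j\lambda_j=1$. One correction to your closing aside: the trivial cuts $C_j\in\{\emptyset,V\}$ must be kept, not discarded. Discarding them and renormalising multiplies the left-hand side by $(1-w)^{-1}$, where $w$ is their total weight, and $w$ is generally positive. For example, for $V=\{u,v\}$ with $u\neq v$ the non-trivial cuts carry total weight exactly $q_t(\|u-v\|)<\frac12$.
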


\begin{proof}
Let $c_1, \cdots, c_N: V \to \{-1,1\}$ be the maps provided by Lemma~\ref{lem:cuts}. For each map $c_j$, define a cut $C_j:=c_j^{-1}(\{1\})$. Then for each $u,v\in V$, we have $c_j(u)c_j(v) = 1-2\chi_{D_{C_j}}(u,v)$. This finishes the proof.
\end{proof}

We now pass from the cut decomposition to operators. The following matrix-coefficient Schur estimate is the key tool in the sequel.

\begin{prop}\label{lem:ql-approx}
Let $X$ be a discrete metric space of bounded geometry, and $F:X\to \H$ be a coarse map with control function $\rho_+$. For $t>0$, put
\begin{equation}\label{EQ:phi t}
 \varphi_t(x,y):=e^{-t\|F(x)-F(y)\|} \quad \text{for} \quad x,y\in X.
\end{equation}
Then for any $T\in\B(\ell^2(X))$, $\xi,\eta\in\ell^2(X)$ and $R>0$, we have
\begin{equation}\label{eq:matrix-coefficient-schur}
\big|\langle \eta,(T-M_{\varphi_t}(T))\xi\rangle\big| \le 2\sup_{d(A,B)>R}
\big|\langle \eta,(\chi_AT\chi_B+\chi_BT\chi_A)\xi\rangle\big| +4\|T\|\cdot\|\xi\|\cdot\|\eta\|\sqrt{p_t(R)},
\end{equation}
where $p_t(R):=N_X(R) \cdot q_t(\rho_+(R))$.
\end{prop}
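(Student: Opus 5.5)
The plan is to reduce to finitely supported vectors, use Corollary~\ref{cor:cuts} to write $T-M_{\varphi_t}(T)$ as a convex combination of ``cut-crossing'' compressions of $T$, and then split each cut into an $R$-separated part and a thin boundary layer.

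\textbf{Reduction to finite support.} Both sides of \eqref{eq:matrix-coefficient-schur} are continuous in $(\xi,\eta)$, uniformly in the sets $A,B$, since every term is bounded by $\|T\|\|\xi\|\|\eta\|$ times a constant. So we may assume $\xi,\eta$ are supported in a finite set $W\subseteq X$. Only the matrix entries $T_{xy}$ with $x,y\in W$ matter, and we may replace $T$ by $\chi_WT\chi_W$, which has norm at most $\|T\|$.

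\textbf{Cut decomposition.} Apply Corollary~\ref{cor:cuts} to the finite set $V=F(W)\subseteq\H$. Pulling back along $F$ gives cuts $C_1,\dots,C_N$ of $W$ and weights $\lambda_j\ge0$ with $\sum_j\lambda_j=1$, such that
\[
\sum_j\lambda_j\chi_{D_{C_j}}(x,y)=q_t(\|F(x)-F(y)\|)\quad\text{for } x,y\in W.
\]
Since $1-\varphi_t=2q_t(\|F(x)-F(y)\|)$ and Schur multiplication by $\chi_{D_C}$ equals $\chi_CT\chi_{C^c}+\chi_{C^c}T\chi_C$, we obtain
\[
T-M_{\varphi_t}(T)=2\sum_j\lambda_j\bigl(\chi_{C_j}T\chi_{C_j^c}+\chi_{C_j^c}T\chi_{C_j}\bigr).
\]

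\textbf{Splitting each cut.} For a cut $C$, set
\[
A:=\{x\in C: d(x,C^c)>R\},\qquad B:=\{y\in C^c: d(y,C)>R\},
\]
so that $d(A,B)>R$. Let $\partial_C:=(C\setminus A)\cup(C^c\setminus B)$ be the $R$-boundary layer. Then
\[
\chi_CT\chi_{C^c}=\chi_AT\chi_B+\chi_{C\setminus A}T\chi_{C^c}+\chi_AT\chi_{C^c\setminus B},
\]
and symmetrically for $\chi_{C^c}T\chi_C$. The two $R$-separated pieces combine into exactly $\chi_AT\chi_B+\chi_BT\chi_A$ with the same pair $(A,B)$. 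Because $\sum_j\lambda_j=1$, their total contribution is at most $2\sup_{d(A,B)>R}|\langle\eta,(\chi_AT\chi_B+\chi_BT\chi_A)\xi\rangle|$.

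\textbf{Boundary terms.} These are the main work. Each boundary piece is bounded by $\|T\|$ times $\|\chi_{\partial_{C_j}}\eta\|\,\|\xi\|$ or $\|\eta\|\,\|\chi_{\partial_{C_j}}\xi\|$. By Cauchy--Schwarz with the probability weights $\lambda_j$,
\[
\sum_j\lambda_j\|\chi_{\partial_{C_j}}\eta\|\le\Bigl(\sum_x|\eta_x|^2\sum_j\lambda_j\chi_{\partial_{C_j}}(x)\Bigr)^{1/2}.
\]
The key observation is that if $x\in\partial_{C_j}$, then some $y$ with $d(x,y)\le R$ lies on the other side of $C_j$, i.e.\ $\chi_{D_{C_j}}(x,y)=1$. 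Hence
\[
\sum_j\lambda_j\chi_{\partial_{C_j}}(x)\le\sum_{y\in B(x,R)}\sum_j\lambda_j\chi_{D_{C_j}}(x,y)=\sum_{y\in B(x,R)}q_t(\|F(x)-F(y)\|)\le N_X(R)\,q_t(\rho_+(R))=p_t(R).
\]
Here we use that $q_t$ is non-decreasing and $\|F(x)-F(y)\|\le\rho_+(d(x,y))\le\rho_+(R)$. The same bound holds for $\xi$. Therefore the boundary contribution is at most a constant times $\|T\|\|\xi\|\|\eta\|\sqrt{p_t(R)}$.

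\textbf{Main obstacle.} The delicate point is bookkeeping the constant to get exactly $4$. A crude count gives $4\sqrt2$. To reach $4$, I would group the boundary pieces per cut as $\chi_{\partial_C}T(\cdot)$ and $(\cdot)T\chi_{\partial_C}$, using orthogonality of $C\setminus A$ and $C^c\setminus B$ inside $\partial_C$, so that each cut contributes $\|T\|(\|\chi_{\partial_C}\eta\|\|\xi\|+\|\eta\|\|\chi_{\partial_C}\xi\|)$. With the prefactor $2$, this yields $4\|T\|\|\xi\|\|\eta\|\sqrt{p_t(R)}$.
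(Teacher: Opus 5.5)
Your proposal is correct and is essentially the paper's argument. Set $K_C:=\chi_CT\chi_{C^c}+\chi_{C^c}T\chi_C$. Your separated piece $\chi_AT\chi_B+\chi_BT\chi_A$ and your two grouped boundary pieces $\chi_{\partial_C}K_C$ and $\chi_{A\cup B}K_C\chi_{\partial_C}$ are exactly the paper's $\tfrac12 Q_jS_jQ_j$, $\tfrac12 P_jS_j$ and $\tfrac12 Q_jS_jP_j$ for $S_j=T_Y-U_jT_YU_j$, and your bound $\sum_j\lambda_j\chi_{\partial_{C_j}}\le p_t(R)$ via the cut identity is the same estimate. The orthogonality and Cauchy--Schwarz grouping does yield the constant $4$; the only detail to add is that the sum over $y\in B(x,R)$ must be restricted to $y\in W$, where the cut identity holds.
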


\begin{proof}
We first prove \eqref{eq:matrix-coefficient-schur} when $\xi$ and $\eta$ have finite support. Choose a finite subset $Y\subseteq X$ containing both supports and set $T_Y:=\chi_YT\chi_Y$, viewed as an operator on $\ell^2(Y)$.
Applying Lemma~\ref{lem:cuts} to $F(Y)$ and pulling the resulting maps back to $Y$, we obtain maps $c_j:Y\to\{-1,1\}$ and weights $\lambda_j\ge0$ with $\sum_j\lambda_j=1$ such that
\begin{equation}\label{eq:color-representation-schur}
 \varphi_t(x,y)=\sum_j\lambda_jc_j(x)c_j(y)
 \quad \text{for any} \quad x,y\in Y.
\end{equation}
For each $j$, define the cut $C_j:=\{v\in Y: c_j(v)=1\}$ and denote $D_j:=D_{C_j}$. Then \eqref{eq:color-representation-schur} can be rewritten
\begin{equation}\label{eq:cut-representation-schur}
 q_t(\|F(x)-F(y)\|) = \sum_j\lambda_j\chi_{D_j}(x,y) \quad \text{for any} \quad x,y\in Y.
\end{equation}

To decompose $M_{\varphi_t}(T)$ accordingly, for each $j$ let $U_j\in\B(\ell^2(Y))$ be the diagonal unitary defined by $U_j\delta_x=c_j(x)\delta_x$ for $x\in Y$, and put $S_j:=T_Y-U_jT_YU_j$. Then
\[
 \chi_Y(T-M_{\varphi_t}(T))\chi_Y=\sum_j\lambda_jS_j, \qquad \|S_j\| \leq 2\|T_Y\| \leq 2\|T\|,
\]
and
\begin{equation}\label{EQ:Sj}
 (S_j)_{xy} =\left(1-c_j(x)c_j(y)\right)T_{xy} =2\chi_{D_j}(x,y) T_{xy}.
\end{equation}

Fix $R>0$. For each $j$, consider the $R$-boundary
\[
 E_j:=\{x\in Y:\text{there exists }z\in B_Y(x,R)\text{ with }(x,z)\in D_j\}.
\]
Then we have $d(C_j^+,C_j^-)>R$ for $C_j^+:=C_j\setminus E_j$ and $C_j^-:=C_j^c\setminus E_j$.
Moreover, for $x\in Y$, \eqref{eq:cut-representation-schur} shows that
\[
 \sum_j\lambda_j\chi_{E_j}(x)
\leq \sum_j\lambda_j \sum_{z\in B_Y(x,R)}\chi_{D_j}(x,z) = \sum_{z\in B_Y(x,R)}q_t(\|F(x)-F(z)\|)\leq p_t(R).
\]
For each $j$, let $P_j:=\chi_{E_j}$ and $Q_j:=\Id_{\ell^2(Y)}-P_j$. Then the inequality above shows
\begin{equation}\label{EQ:estimate of P}
\sum_j \lambda_j P_j\leq p_t(R) \Id_{\ell^2(Y)}.
\end{equation}
Moreover, note that $\ran(Q_j) = \ell^2(C_j^+) \oplus \ell^2(C_j^-)$.  Hence,  with respect to this decomposition, \eqref{EQ:Sj} implies:
\[
 Q_jS_jQ_j
 =2\begin{pmatrix}
 0&\chi_{C_j^+}T\chi_{C_j^-}\\
 \chi_{C_j^-}T\chi_{C_j^+}&0
 \end{pmatrix}.
\]

On the other hand, we have the decomposition:
\[
\sum_j \lambda_j S_j = \sum_j \lambda_j Q_jS_jQ_j + \sum_j \lambda_j P_jS_j + \sum_j \lambda_j Q_jS_jP_j.
\]
For any $\xi, \eta \in \ell^2(Y)$, we have
\begin{align*}
\left|
\left\langle
\eta,
\sum_j\lambda_jQ_jS_jQ_j\xi
\right\rangle
\right|
\le
2\sup_{d(A,B)>R}
\left|
\left\langle
\eta,
(\chi_AT\chi_B+\chi_BT\chi_A)\xi
\right\rangle
\right|.
\end{align*}
Next, by the Cauchy--Schwarz inequality, \eqref{EQ:estimate of P} and $\|S_j\|\le2\|T\|$, we obtain:
\begin{align*}
 \left|\left\langle\eta,
 \sum_j\lambda_jP_jS_j\xi\right\rangle\right|
 &=\left|\sum_j\lambda_j\langle P_j\eta,S_j\xi\rangle\right|\leq
 \left(\sum_j\lambda_j\|P_j\eta\|^2\right)^{1/2}
 \left(\sum_j\lambda_j\|S_j\xi\|^2\right)^{1/2}\\
 &\leq
 \left\langle\eta, \left(\sum_j\lambda_jP_j\right)\eta\right\rangle^{1/2}
 \cdot 2\|T\|\cdot \|\xi\| \leq 2\|T\|\cdot \|\eta\|\cdot \|\xi\| \sqrt{p_t(R)}.
\end{align*}
Similarly,
\[
\left|
\left\langle
\eta,
\sum_j\lambda_jQ_jS_jP_j\xi
\right\rangle
\right|=
\left|
\sum_j\lambda_j
\langle S_j^*Q_j\eta,P_j\xi\rangle
\right|\le
2\|T\|\cdot \|\eta\|\cdot \|\xi\| \sqrt{p_t(R)}.
\]
Combining these estimates proves \eqref{eq:matrix-coefficient-schur} for finitely supported $\xi,\eta$. The general case follows by approximating $\xi$ and $\eta$ by finitely supported ones.
%Hence, $\left\|\sum_j\lambda_jP_jS_j\right\| \leq 2\sqrt p \|T\|$. Similarly, we also have $\left\|\sum_j\lambda_jQ_jS_jP_j\right\| \leq 2 \sqrt p \|T\|$.
%Combining the above together, we obtain:
%\[
%\left\|\chi_Y(T - M_{\varphi_t}(T)) \chi_Y \right\| = \left\|\sum_j\lambda_jS_j\right\| \leq 2\sup_{d(A,B)>R}\|\chi_AT\chi_B\| + 4 \sqrt p \|T\|,
%\]
%which concludes \eqref{eq:ql-approx}.
%
%For the last sentence in the statement, given $\varepsilon>0$, take $R>0$ such that $\sup_{d(A,B)>R}\|\chi_AT\chi_B\| < \varepsilon$. Then take $t>0$ such that $N_X(R) \cdot q_t(\rho_+(R)) < \varepsilon$. Hence, we obtain $\left\|T - M_{\varphi_t}(T)\right\| \leq 2 \varepsilon + 4\|T\|\sqrt{\varepsilon}$. This finishes the proof.
\end{proof}

Consequently, we obtain:

\begin{cor}\label{cor:ql-approx}
Let $X$ be a discrete metric space of bounded geometry, and $F:X\to \H$ be a coarse map with control function $\rho_+$. For $t>0$, define $\varphi_t$ as in \eqref{EQ:phi t}. Then for any $T\in\B(\ell^2(X))$ and $R>0$, we have
\begin{equation}\label{eq:ql-approx}
 \|M_{\varphi_t}(T)-T\| \leq 2\sup_{d(A,B)>R}\|\chi_A T\chi_B\| +4\|T\|\sqrt{N_X(R) \cdot q_t(\rho_+(R))},
\end{equation}
Consequently, $M_{\varphi_t}(T) \to T$ as $t \to 0$ for every $T \in C^*_{uq}(X)$.
\end{cor}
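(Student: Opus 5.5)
The norm estimate \eqref{eq:ql-approx} follows from Proposition~\ref{lem:ql-approx} by taking a supremum over unit vectors. Fix $T\in\B(\ell^2(X))$ and $R>0$, and take $\xi,\eta\in\ell^2(X)$ with $\|\xi\|=\|\eta\|=1$. For any $A,B\subseteq X$ with $d(A,B)>R$, the triangle inequality gives
\[
\big|\langle \eta,(\chi_AT\chi_B+\chi_BT\chi_A)\xi\rangle\big|
\le \|\chi_AT\chi_B\|+\|\chi_BT\chi_A\|.
\]
Since $d(B,A)=d(A,B)>R$, both terms are bounded by $\sup_{d(A',B')>R}\|\chi_{A'}T\chi_{B'}\|$. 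This looks like it produces a factor $2$ inside, and hence $4$ overall, which is worse than the stated constant $2$. To recover $2$, I would refine the first bound in the proof of Proposition~\ref{lem:ql-approx}. There the term $\sum_j\lambda_jQ_jS_jQ_j$ is an average of operators of the form $2\begin{pmatrix}0&\chi_{C_j^+}T\chi_{C_j^-}\\ \chi_{C_j^-}T\chi_{C_j^+}&0\end{pmatrix}$. The norm of such an off-diagonal block operator is the maximum of the norms of its two blocks, not their sum. Each block has norm at most $\sup_{d(A,B)>R}\|\chi_AT\chi_B\|$, because $d(C_j^+,C_j^-)>R$. So each averaged operator has norm at most $2\sup_{d(A,B)>R}\|\chi_AT\chi_B\|$, and therefore so does their convex combination.

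The other two terms in that decomposition are bounded exactly as before, by $2\|T\|\sqrt{p_t(R)}$ each, where $p_t(R)=N_X(R)\,q_t(\rho_+(R))$. This gives
\[
\|\chi_Y(T-M_{\varphi_t}(T))\chi_Y\|\le 2\sup_{d(A,B)>R}\|\chi_AT\chi_B\| +4\|T\|\sqrt{p_t(R)}
\]
for every finite $Y\subseteq X$. Now let $Y$ increase along an exhaustion of $X$. Then $\chi_Y\to\Id$ strongly, and the norm of $T-M_{\varphi_t}(T)$ is the supremum of the norms of its finite compressions. Hence the bound passes to $\|M_{\varphi_t}(T)-T\|$, which is \eqref{eq:ql-approx}. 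Here $M_{\varphi_t}$ is bounded because $\varphi_t$ is normalised positive definite. Since the finite-set argument is identical for every $Y$, the only real work in this part is this small refinement of the constant.

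For the convergence statement, let $T\in C^*_{uq}(X)$ and $\varepsilon>0$. By quasi-locality, choose $R$ such that $\|\chi_AT\chi_B\|<\varepsilon$ whenever $d(A,B)>R$. With $R$ fixed, $N_X(R)<\infty$ by bounded geometry and $\rho_+(R)<\infty$. Since $q_t(s)=(1-e^{-ts})/2\to0$ as $t\to0$ for each fixed $s$, we get $N_X(R)\,q_t(\rho_+(R))\to0$. So there is $t_0>0$ such that this quantity is below $\varepsilon^2$ for all $t<t_0$. For such $t$, \eqref{eq:ql-approx} yields $\|M_{\varphi_t}(T)-T\|\le 2\varepsilon+4\|T\|\varepsilon$. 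As $\varepsilon$ is arbitrary, $M_{\varphi_t}(T)\to T$ in norm as $t\to0$.
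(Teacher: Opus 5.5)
Your proof is correct and is essentially how the corollary follows from Proposition~\ref{lem:ql-approx} in the paper: take the supremum over unit vectors $\xi,\eta$, then use quasi-locality and $q_t(\rho_+(R))\to 0$ as $t\to0$, exactly as in your last paragraph. Reopening the proposition's proof is unnecessary, though. When $d(A,B)>R$ the sets $A$ and $B$ are disjoint, so $\chi_AT\chi_B+\chi_BT\chi_A$ is block off-diagonal with $\|\chi_AT\chi_B+\chi_BT\chi_A\|=\max\{\|\chi_AT\chi_B\|,\|\chi_BT\chi_A\|\}$; this is the same max-of-blocks observation you used, applied directly to the first term of \eqref{eq:matrix-coefficient-schur}, and it gives the constant $2$ straight from the proposition's statement.
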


\section{Characterisation I: escaping quasi-local couplings}\label{sec:char I}

This section is devoted to Theorem~\ref{thm:esc and ql coupling} and its applications. Recall from Definition~\ref{defn:escaping quasi-local coupling} that an \emph{escaping quasi-local coupling} on a discrete metric space $X$ of bounded geometry  is a sequence of symmetric finitely supported probability measures $\{\nu_n\}_n$ on $X\times X$ satisfying:
\begin{description}[
  font=\normalfont\bfseries,
  leftmargin=3em,
  labelsep=0.5em,
  style=multiline
]
\item[\textbf{(E)}] For any $S>0$, $\nu_n\bigl(\{(x,y)\in X^2:d(x,y)\le S\}\bigr)\to 0$;
\item[\textbf{(Q)}] For any $\varepsilon>0$, there is $R>0$ such that $\nu_n(A\times B)<\varepsilon$ for any $n$ and $A,B \subseteq X$ with $d(A,B)>R$.
\end{description}

\begin{rem}
Note that if $\supp(\nu_n) \subseteq Y_n \times Y_n$ for some $Y_n\subseteq X$, then in verifying condition \textbf{(Q)} it suffices to consider subsets \(A,B\subseteq Y_n\). Indeed, for arbitrary \(A,B\subseteq X\) we have $\nu_n(A\times B) = \nu_n\bigl((A\cap Y_n)\times(B\cap Y_n)\bigr)$, while $d(A\cap Y_n,B\cap Y_n)\geq d(A,B)$.
\end{rem}

We claim in Theorem~\ref{thm:esc and ql coupling} that $X$ can be coarsely embedded into some Hilbert space if and only if it admits no escaping quasi-local coupling. To prove it, we will show that generalised expanders provide escaping couplings when coarse embeddability fails, while the converse follows from the matrix-coefficient Schur estimate in Proposition~\ref{lem:ql-approx}. We package the proof into the following subsection.

%\begin{thm}\label{thm:esc and ql coupling}
%A discrete metric space $(X,d)$ of bounded geometry can be coarsely embedded into some Hilbert space \emph{if and only if} $X$ admits no escaping quasi-local coupling.
%\end{thm}

%The non-trivial direction of the proof for Theorem~\ref{thm:esc and ql coupling} is based on a cut decomposition of finite Hilbert metrics. Since the same decomposition tool will also be used for the Schur approximation in Section~\ref{sec:schur-ghost}, we isolate it as follows.

\subsection{Proof of Theorem~\ref{thm:esc and ql coupling}}\label{subsec:couplings}

\begin{lem}\label{lem:generalised expander implies coupling}
Let $\{Y_n\}_n$ be finite subsets of $(X,d)$ which form a sequence of generalised expanders. Extending their witnessing measures by zero to $X\times X$ gives an escaping quasi-local coupling on $X$.
\end{lem}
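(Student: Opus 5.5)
We must check (E) and (Q) for the measures $\mu_n$ witnessing that $Y_n$ is a generalised $(k,r_n)$-expander, with $r_n\to\infty$ and $k$ fixed. Each $\mu_n$ is a symmetric probability measure on $Y_n\times Y_n$, finitely supported since $Y_n$ is finite. So it remains only to verify the two conditions.

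\textbf{(E).} Fix $S>0$. Since $\supp(\mu_n)\subseteq\{(x,y):d(x,y)\ge r_n\}$ and $r_n\to\infty$, for all $n$ with $r_n>S$ we get $\mu_n(E_S)=0$. Hence $\mu_n(E_S)\to 0$; in fact it is eventually zero.

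\textbf{(Q).} This follows from the Poincar\'e inequality \eqref{EQ:generalised expander}, applied to a truncated distance function, exactly as in the first claim in the proof of Proposition~\ref{prop:sparse-reduction}.

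1. Given $\varepsilon>0$, choose $R\geq 1$ with $k^2/R^2<\varepsilon$.
2. Let $A,B\subseteq X$ with $d(A,B)>R$. By the remark above we may take $A,B\subseteq Y_n$.
3. If $A=\emptyset$ there is nothing to prove. Otherwise define the scalar map $f(z):=\min\{d(z,A),R\}$ on $Y_n$. It is $1$-Lipschitz, so the condition $|f(x)-f(y)|\le d(x,y)$ required in Definition~\ref{defn:generalised expanders} holds for all pairs.
4. We have $f\equiv 0$ on $A$ and $f\equiv R$ on $B$. Then \eqref{EQ:generalised expander} gives
\[
R^2\mu_n(A\times B)\le \sum_{x,y}|f(x)-f(y)|^2\mu_n(x,y)\le k^2,
\]
so $\mu_n(A\times B)\le k^2/R^2<\varepsilon$ for every $n$.

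This is uniform in $n$, since $k$ does not depend on $n$, and that uniformity is what condition (Q) requires.

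There is no real obstacle here. The only point needing care is that the Lipschitz-type hypothesis in the definition of generalised expanders is only imposed when $d(x,y)\ge1$. A genuinely $1$-Lipschitz $f$ satisfies that hypothesis a fortiori, so the inequality applies directly.
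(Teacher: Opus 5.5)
Your proof is correct and follows essentially the same route as the paper. You obtain (E) from the support condition $\supp(\mu_n)\subseteq\{(x,y):d(x,y)\ge r_n\}$, and (Q) by feeding the $1$-Lipschitz truncated distance $f(z)=\min\{d(z,A),R\}$ into the Poincar\'e inequality, which gives $\mu_n(A\times B)\le k^2/R^2$ uniformly in $n$.
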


\begin{proof}
Let $\mu_n$ witness that $Y_n$ is a generalised $(k,r_n)$-expander for some $k>0$ and $r_n\to\infty$, and extend $\mu_n$ by zero to $X\times X$. Condition \textbf{(E)} follows from the support condition. To see condition \textbf{(Q)}, fix $\varepsilon>0$ and take $R>k/\sqrt\varepsilon$. Given $A,B\subseteq X$ with $d(A,B)>R$, consider
\[
 f(z):=\min\{d(z,A),R\}\quad \text{for} \quad z\in X.
\]
Then $f$ is $1$-Lipschitz, equals zero on $A$, and equals $R$ on $B$. Hence,
\[
R^2\mu_n(A\times B) = R^2\mu_n\left((A\cap Y_n)\times(B\cap Y_n)\right)
 \leq \sum_{x,y \in Y_n}|f(x)-f(y)|^2 \mu_n(x,y) \leq k^2,
\]
which implies $\mu_n(A\times B) \leq k^2/R^2 <\varepsilon$ and finishes the proof.
\end{proof}

Now we prove Theorem~\ref{thm:esc and ql coupling}.

\begin{proof}[Proof of Theorem~\ref{thm:esc and ql coupling}]
First, assume that \(X\) does not coarsely embed into any Hilbert space. Proposition~\ref{prop:tessera-general} provides finite subsets $Y_n\subseteq X$ which form a sequence of generalised expanders. Then Lemma~\ref{lem:generalised expander implies coupling} provides an escaping quasi-local coupling on $X$, which shows the sufficiency.

Conversely, let $F:X\to\H$ be a coarse embedding with control functions $\rho_-,\rho_+$, and suppose that $\{\nu_n\}_n$ is an escaping quasi-local coupling. For each $n$, let
\[
 m_n(x):=\sum_{y\in X}\nu_n(x,y),
 \quad \text{and} \quad
 Y_n:=\{x\in X:m_n(x)>0\}.
\]
Since $\nu_n$ is finitely supported and symmetric, $Y_n$ is finite and $\nu_n$ is supported on $Y_n\times Y_n$. Define $v_n\in\ell^2(Y_n)$ and a matrix $K_n\in\B(\ell^2(Y_n))$ by
\[
 v_n(x):=\sqrt{m_n(x)}
 \quad \text{and} \quad
 (K_n)_{xy}:=\frac{\nu_n(x,y)}{\sqrt{m_n(x)m_n(y)}}
 \quad \text{for} \quad x,y\in Y_n.
\]
Then $\|v_n\|=1$ and $\sum_{y\in Y_n}(K_n)_{xy}v_n(y)=v_n(x)$. Together with Lemma~\ref{lem:weighted Schur test}, the equality above gives $\|K_n\|= 1$. Regard $K_n$ and $v_n$ as supported on $Y_n\subseteq X$. Direct calculations show
\begin{equation}\label{eq:coupling-as-coefficient}
 \langle v_n,\chi_AK_n\chi_Bv_n\rangle=\nu_n(A\times B) \quad \text{for any} \quad A,B\subseteq X.
\end{equation}

For $R>0$, set $\alpha(R):=\sup\{\nu_n(A\times B): n\in \NN \text{ and } A,B \subseteq X \text{ with } d(A,B) > R\}$.
Condition \textbf{(Q)} implies that $\alpha(R)\to 0$ as $R\to\infty$. Applying Proposition~\ref{lem:ql-approx} to $T=K_n$ and $\xi=\eta=v_n$ and using \eqref{eq:coupling-as-coefficient}, we obtain
\begin{equation}\label{eq:coupling-from-schur}
\big|\langle v_n,(K_n-M_{\varphi_t}(K_n))v_n\rangle\big|\le4\alpha(R)+4\sqrt{p_t(R)}
\end{equation}
for every $R,t>0$, where $\varphi_t(x,y)=e^{-t\|F(x)-F(y)\|}$ and $p_t(R)=N_X(R) \cdot q_t(\rho_+(R))$. Direct calculations show
\begin{equation}\label{EQ:calculation coupling}
\big|\langle v_n,(K_n-M_{\varphi_t}(K_n))v_n\rangle\big| = \left|1-\sum_{x,y\in Y_n}\varphi_t(x,y)\nu_n(x,y)\right|.
\end{equation}

On the other hand, for every fixed $t>0$ and $S>0$, we have
\begin{align*}
0\le\sum_{x,y\in Y_n}\varphi_t(x,y)\nu_n(x,y)
&\le \nu_n\bigl(\{(x,y):d(x,y)\le S\}\bigr)+e^{-t\rho_-(S)}.
\end{align*}
Condition \textbf{(E)} therefore gives
\[
\limsup_{n\to\infty}\sum_{x,y\in Y_n}\varphi_t(x,y)\nu_n(x,y)
 \le e^{-t\rho_-(S)}.
\]
Letting $S\to\infty$, we obtain
\begin{equation}\label{eq:coupling-kernel-vanishes}
 \lim_{n\to \infty}\sum_{x,y\in Y_n}\varphi_t(x,y)\nu_n(x,y)= 0 \quad \text{for each fixed} \quad t>0.
\end{equation}

Taking $n\to\infty$ in \eqref{eq:coupling-from-schur} and using \eqref{EQ:calculation coupling} and \eqref{eq:coupling-kernel-vanishes}, we obtain
\[
 1\le4\alpha(R)+4\sqrt{p_t(R)}
 \quad \text{for any} \quad R,t>0.
\]
For fixed $R$, we have $p_t(R)\to0$ as $t\to0$ and hence $1\le4\alpha(R)$, which contradicts condition \textbf{(Q)}. Thus, no escaping quasi-local coupling can exist on $X$.
\end{proof}

In the case of coarse disjoint unions of finite metric spaces, Theorem~\ref{thm:esc and ql coupling} can be refined to the following:

\begin{cor}\label{cor:coupling-cdu}
Let $\{(X_n,d_n)\}_{n\in \NN}$ be a sequence of finite metric spaces of uniformly bounded geometry, $X$ be their coarse disjoint union. Then $X$ does not coarsely embed into any Hilbert space \emph{if and only if} there exists a subsequence $\{X_{n_k}\}_k$ and an escaping quasi-local coupling $\{\nu_{n_k}\}_k$ with each $\nu_{n_k}$ supported on $X_{n_k}\times X_{n_k}$.
\end{cor}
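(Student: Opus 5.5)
The plan is to combine Theorem~\ref{thm:esc and ql coupling} with Proposition~\ref{prop:Tessera_generalised_expander} and Lemma~\ref{lem:generalised expander implies coupling}.

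\emph{Necessity.} Assume that $X$ does not coarsely embed into any Hilbert space. Proposition~\ref{prop:Tessera_generalised_expander} then gives a subsequence $\{X_{n_k}\}_k$ forming a sequence of generalised expanders. Let $\mu_k$ be the witnessing symmetric probability measures; each is supported on $X_{n_k}\times X_{n_k}$. Extend them by zero to $X\times X$. By Lemma~\ref{lem:generalised expander implies coupling}, the sequence $\nu_{n_k}:=\mu_k$ is an escaping quasi-local coupling. Condition \textbf{(E)} holds because $\supp(\mu_k)\subseteq\{d\ge r_k\}$ with $r_k\to\infty$. Condition \textbf{(Q)} is exactly the Lipschitz test-function argument in that lemma. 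Finite support is automatic since each $X_{n_k}$ is finite.

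\emph{Sufficiency.} Suppose such a subsequence and coupling exist. Reindex $\{\nu_{n_k}\}_k$ as a sequence $\{\nu_k\}_k$. Each $\nu_k$ is a symmetric probability measure on $X\times X$, finitely supported because it lives on the finite set $X_{n_k}\times X_{n_k}$. By hypothesis it satisfies \textbf{(E)} and \textbf{(Q)}, so it is an escaping quasi-local coupling on $X$ in the sense of Definition~\ref{defn:escaping quasi-local coupling}. Moreover $X$ has bounded geometry, since the sequence $\{X_n\}$ has uniformly bounded geometry and the components separate. Theorem~\ref{thm:esc and ql coupling} therefore shows that $X$ does not coarsely embed into Hilbert space.

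There is no real obstacle here; everything reduces to earlier results. The one point to check is that Proposition~\ref{prop:Tessera_generalised_expander} produces the expanders along the actual blocks $X_{n_k}$, rather than arbitrary subsets as in Proposition~\ref{prop:tessera-general}. That block localisation is exactly what it provides. One should also confirm that the remetrisation used in its proof does not matter: changing the metric by a coarse equivalence that agrees with $d$ on each $X_n$ affects neither the expander condition on the blocks nor conditions \textbf{(E)} and \textbf{(Q)}. Indeed, \textbf{(E)} and \textbf{(Q)} only involve distances within the $X_{n_k}$, as noted in the remark preceding Theorem~\ref{thm:esc and ql coupling}.
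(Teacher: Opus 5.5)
Your proposal is correct and follows the paper's proof: Proposition~\ref{prop:Tessera_generalised_expander} together with Lemma~\ref{lem:generalised expander implies coupling} gives necessity, and Theorem~\ref{thm:esc and ql coupling} gives sufficiency. Your extra checks on the block-localisation, the remetrisation and bounded geometry are harmless and consistent with the paper.
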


\begin{proof}
Assume first that $X$ does not coarsely embed into Hilbert space. By Proposition~\ref{prop:Tessera_generalised_expander}, there is a subsequence $\{X_{n_k}\}_k$ which forms a sequence of generalised expanders. Their witnessing measures form an escaping quasi-local coupling on this subsequence by Lemma~\ref{lem:generalised expander implies coupling}. Conversely, the existence of such a coupling implies that \(X\) does not coarsely embed into Hilbert space by Theorem~\ref{thm:esc and ql coupling}.
\end{proof}

\subsection{Application: ghost quasi-local projections}

Now we use Theorem~\ref{thm:esc and ql coupling} to study compactness of certain ghost operators in the quasi-local algebra. The following is a refined version of Theorem~\ref{introcor:ghost proj are compact}.

\begin{thm}\label{prop:ghost proj are compact}
Let $\{(X_n,d_n)\}_{n}$ be a sequence of finite metric spaces of uniformly bounded geometry, $X$ be their coarse disjoint union, and $\H_0$ be a separable Hilbert space. If $X$ is coarsely embeddable into Hilbert space, then for every locally compact ghost projection \(P\in\B(\ell^2(X;\H_0))\), we have
\[
\operatorname{dist}\big(P,C_q^*(X;\H_0)) = \operatorname{dist}\big(P,C^*(X;\H_0)\big)
=
\begin{cases}
0,& \text{if }P\text{ is compact};\\
\frac12,&\text{if }P\text{ is non-compact}.
\end{cases}
\]
In particular, any ghost projection in $C^*_q(X;\H_0)$ is compact.
%
%any ghost projection in $C^*_q(X;\H_0)$ is compact. Moreover, for any non-compact locally compact ghost projection $P \in \B(\ell^2(X;\H_0))$, we have
%\[
%\mathrm{dist}\left(P,C^*_{q}(X;\H_0)\right)=\mathrm{dist}\big(P,C^*(X;\H_0)\big)=\frac12.
%\]
\end{thm}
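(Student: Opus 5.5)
The plan has three parts: the compact case, an upper bound of $\tfrac12$ for non-compact $P$, and a lower bound of $\tfrac12$ obtained by contradiction with Corollary~\ref{cor:coupling-cdu}.

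\textbf{Compact case and upper bound.} Since $C^*(X;\H_0)\subseteq C^*_q(X;\H_0)$, it suffices to show that $\operatorname{dist}(P,C^*(X;\H_0))\le \frac12$ always, and that $\operatorname{dist}(P,C^*_q(X;\H_0))\ge\frac12$ when $P$ is non-compact. If $P$ is compact, then $P\in\KK(\ell^2(X;\H_0))\subseteq C^*(X;\H_0)$ and both distances are $0$.

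For the upper bound, enumerate $X=\{x_1,x_2,\dots\}$ and fix $\delta>0$. Local compactness gives that $\chi_{\{x_i\}}P$ is compact. Hence I can choose finite-rank projections $e_i$ on $\H_0$ with
\[
\|(1-e_i)\chi_{\{x_i\}}P\|<\delta 2^{-i}.
\]
Let $\Pi:=\SOT\text{-}\sum_i \delta_{x_i}\delta_{x_i}^*\otimes e_i$. This is a propagation-zero, locally compact projection, so $\Pi\in C^*(X;\H_0)$, and $\|(1-\Pi)P\|\le\delta$. Therefore $\|P-\Pi P\Pi\|\le 2\delta$. Moreover $\Pi P\Pi-\frac12\Pi$ has norm at most $\frac12$, because $0\le \Pi P\Pi\le \Pi$. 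This gives $\|P-\frac12\Pi\|\le\frac12+2\delta$, and letting $\delta\to0$ yields the upper bound.

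\textbf{Lower bound: reduction to a projection in $C^*_q$.} Suppose $P$ is non-compact and $T\in C^*_q(X;\H_0)$ satisfies $\|P-T\|=\delta<\frac12$. Replacing $T$ by $(T+T^*)/2$, we may assume $T$ is self-adjoint. Then
\[
\sigma(T)\subseteq[-\delta,\delta]\cup[1-\delta,1+\delta],
\]
so $Q:=\chi_{(1/2,\infty)}(T)$ lies in the $C^*$-algebra $C^*_q(X;\H_0)$. By the standard gap estimate, $\|Q-P\|\le c(\delta)<1$. The aim is to build from $Q$ and $P$ an escaping quasi-local coupling supported on blocks $X_{n_k}\times X_{n_k}$. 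By Corollary~\ref{cor:coupling-cdu}, such a coupling contradicts coarse embeddability.

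\textbf{Lower bound: constructing the coupling.} Because $P$ has infinite rank and is a ghost, I pick unit vectors $v_k\in\ran(P)$ essentially supported in single blocks $X_{n_k}$ with $n_k\to\infty$. The intended route is to cut $P$ by the block projections $\chi_{X_n}$, using sparseness together with $\|P-Q\|<1$ and the quasi-locality of $Q$: these make $Q$ approximately block-diagonal far out. The reason to use blocks is to invoke Corollary~\ref{cor:coupling-cdu} directly. I then define
\[
\nu_k(x,y):=\tfrac{1}{2Z_k}\bigl(\|\chi_{\{x\}}Q\chi_{\{y\}}v_k\|^2+\|\chi_{\{y\}}Q\chi_{\{x\}}v_k\|^2\bigr),
\]
truncated to a finite subset of $X_{n_k}^2$, where $Z_k$ is the normalising constant. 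Since $\|Qv_k\|\ge 1-c(\delta)$, the normaliser $Z_k$ is bounded below.

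Condition \textbf{(Q)} should follow from the quasi-locality of $Q$ via $\|\chi_AQ\chi_B\|$. Condition \textbf{(E)} should follow from ghostness: on the entourage $E_S$, bounded geometry and $\|Q-P\|<1$ let me compare with the matrix entries $P_{xy}$, which tend to $0$ away from finite sets, while $v_k$ escapes every finite set.

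\textbf{Main obstacle.} I expect the lower bound to be the hardest part, specifically choosing the weighting so that $\nu_k$ is genuinely additive on rectangles. This is needed to get $\nu_k(A\times B)\lesssim\|\chi_AQ\chi_B\|^2$, not just bounds pointwise in $y$. If the formula above fails, I would first try the density-operator version $\nu_k(x,y)=\Tr(\chi_{\{x\}}Q\chi_{\{y\}}D_kQ)$ with $D_k$ a suitable average of rank-one projections onto vectors in $\ran P$. The second difficulty is transferring ghostness from $P$ to $Q$ near the diagonal, which only holds up to the error $c(\delta)$. For this I would shrink $\delta$ by applying the argument to a projection closer to $P$, since any $T$ at distance $<\frac12$ produces such a $Q$ whose distance to $P$ is controlled.
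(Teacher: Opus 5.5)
Your compact case, the upper bound $\operatorname{dist}(P,C^*(X;\H_0))\le\frac12$, and the passage to a projection $Q=\chi_{(1/2,\infty)}(T)\in C^*_q(X;\H_0)$ with $\|P-Q\|<1$ are correct and match the paper. The gap is in the coupling. Your normaliser is a sum of squared norms of the pieces $Q\chi_{\{y\}}v_k$, not $\|Qv_k\|^2=\|\sum_yQ\chi_{\{y\}}v_k\|^2$. After your truncation to $X_{n_k}^2$ it satisfies
\[
Z_k\le\sum_{y\in X_{n_k}}\|Q\chi_{\{y\}}v_k\|^2=\sum_{y\in X_{n_k}}\langle Q_{yy}v_k(y),v_k(y)\rangle\le\sup_{y\in X_{n_k}}\|Q_{yy}\|,
\]
so $\|Qv_k\|\ge1-c(\delta)$ gives no lower bound on $Z_k$. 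Worse, $Q$ is a ghost (which you need for \textbf{(E)} anyway) and $n_k\to\infty$, so $Z_k\to0$. Your bound $\nu_k(A\times B)\le(\|\chi_AQ\chi_B\|^2+\|\chi_BQ\chi_A\|^2)/(2Z_k)$ is therefore useless for \textbf{(Q)}. Sharper estimates will not fix this. For a rank-one block $Q_n=w_nw_n^*$ (take $\H_0=\CC$), your $\nu$ is the symmetrisation of $|w_n(x)|^2|w_n(y)|^4/\|w_n\|_4^4$. A small spike of $w_n$ at a point $y_0$ far from the bulk of $w_n$ then puts $\nu$-mass of order one on $(X_n\setminus B(y_0,R))\times\{y_0\}$, while the corresponding corner of $Q_n$ has norm about the height of the spike. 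A single vector state does not register quasi-locality with a uniform normalisation.

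The missing idea is to use a whole finite-rank block, normalised by its trace. Your plan to shrink $\delta$ cannot work, since $\delta\ge\operatorname{dist}(P,C^*_q(X;\H_0))$, which is the quantity under study. It is also unnecessary, because $Q$ is an exact ghost:
\begin{enumerate}
\item bounded geometry and ghostness of $P$ give $\|P\chi_{B(x,R)}\|\to0$;
\item quasi-locality of $Q$ then gives $\|PQ\chi_{\{x\}}\|\to0$;
\item finally $\|Q\chi_{\{x\}}\|\le\|P-Q\|\,\|Q\chi_{\{x\}}\|+\|PQ\chi_{\{x\}}\|$ with $\|P-Q\|<1$.
\end{enumerate}
So it suffices to show that every ghost projection $Q\in C^*_q(X;\H_0)$ is compact; then $Q$, and hence $P$, has finite rank. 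For such $Q$, the difference $Q-\sum_n\chi_{X_n}Q\chi_{X_n}$ is compact. Hence the finite-rank projections $Q_n:=\chi_{[1/2,1]}(\chi_{X_n}Q\chi_{X_n})$ give a block-diagonal ghost projection $\sum_nQ_n$ differing from $Q$ by a compact operator. The coupling is then
\[
\nu_n(x,y)=\frac{\|(Q_n)_{xy}\|_2^2}{\Tr(Q_n^2)}.
\]
Condition \textbf{(Q)} follows from $\|\chi_AQ_n\chi_B\|_2^2\le\|\chi_AQ_n\chi_B\|\,\|\chi_AQ_n\chi_B\|_1\le\|\chi_AQ_n\chi_B\|\Tr(Q_n)$ together with $\Tr(Q_n)=\Tr(Q_n^2)$; this is exactly the uniform normalisation a single vector lacks. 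Condition \textbf{(E)} follows from $\|(Q_n)_{xy}\|_2^2\le\|(Q_n)_{xx}\|\Tr((Q_n)_{yy})$ and ghostness. Theorem~\ref{thm:esc and ql coupling} then forces only finitely many $Q_n\neq0$. Your fallback $\Tr(\chi_{\{x\}}Q\chi_{\{y\}}D_kQ)$ reduces to this for the specific choice $D_k=Q_{n_k}/\Tr(Q_{n_k})$ with $Q$ block-diagonal. For a general average of projections onto vectors in $\ran P$, however, it need not even be non-negative, and the proposal supplies neither this choice nor the trace-norm estimate.
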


\begin{rem}
The Roe-algebra analogue of the compactness conclusion of Theorem~\ref{prop:ghost proj are compact} is already known. Indeed, if \(X\) is coarsely embeddable, then the inclusion $\KK(\ell^2(X;\H_0)) \hookrightarrow G(X;\H_0)$ induces an isomorphism on \(K\)-theory by \cite[Corollary 36]{Fin14} (see also \cite[Theorem C]{WFZ25}). Since \(X\) is sparse, then by \cite[Lemma 4.4]{GLW26}, every ghost projection in \(C^*(X;\H_0)\) is compact.
Theorem~\ref{prop:ghost proj are compact} strengthens this picture to the quasi-local algebra. This does not follow from the preceding \(K\)-theoretic argument, since it is still open whether $C^*(X)$ and $C_q^*(X)$ have the same \(K\)-theory for coarsely embeddable spaces.
\end{rem}

%Before proving Proposition~\ref{prop:ghost proj are compact}, we record two facts about projections.

To prove Theorem~\ref{prop:ghost proj are compact}, we start with some elementary facts on ghost projections.

\begin{lem}\label{lem:ghost projeciton elementary}
Let \(X\) be a discrete metric space of bounded geometry, and \(\mathcal H_0\) be a separable Hilbert space.
\begin{enumerate}
 \item If \(P\in\B(\ell^2(X;\H_0))\) is a ghost projection and \(Q\) is a quasi-local projection with \(\|P-Q\|<1\), then \(Q\) is a ghost.
 \item For every locally compact projection \(P\in\B(\ell^2(X;\H_0))\), we have
\[
   \operatorname{dist}\big(P,C^*(X;\H_0)\big)\le\frac12.
\]
\end{enumerate}
\end{lem}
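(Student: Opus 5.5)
For part (1), I will argue via matrix coefficients and functional calculus. Since $\|P-Q\|<1$, the operator $P Q P$ restricted to $\ran(P)$ is invertible, and likewise $QPQ$ on $\ran(Q)$. The plan is to write $Q$ in terms of $P$ and $Q$ using a standard formula. Let $U$ be the usual partial-isometry-type intertwiner, for instance the polar part of $QP$. Then $Q = U P U^*$ with $U = QP|PQP|^{-1/2}$ on $\ran(P)$, where $|PQP|^{-1/2}$ is computed in the corner $P\B P$. Every element of the corner $P\B(\ell^2(X;\H_0))P$ obtained by continuous functional calculus from $PQP$, with a function vanishing at $0$, can be written as a norm limit of polynomials $P f(PQP) P$. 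Each such polynomial has $P$ as a factor on both sides.

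The key observation is that the ghost operators in $\B(\ell^2(X;\H_0))$, or at least the products $A P B$ with $A,B$ bounded, are not obviously ghosts. Instead I would use that $G$ is an ideal in the quasi-local algebra. Concretely, I would show that if $P$ is a ghost and $Q$ is quasi-local, then $QPQ$ is a ghost. Then $Q = g(QPQ)$ for a continuous $g$ with $g(0)=0$ and $g=1$ on $[1-\|P-Q\|^2,1]$. This uses that the spectrum of $QPQ$ on $\ran(Q)$ lies in $[1-\|P-Q\|^2,1]$, since $\|Q\xi - QPQ\xi\| = \|Q(1-P)Q\xi\|\le\|P-Q\|^2$ for unit $\xi\in\ran Q$. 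Norm-closedness of the ghosts and the fact that they form an algebra then give that $Q$ is a ghost.

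The main obstacle is showing $QPQ$ is a ghost. I would estimate $\|(QPQ)_{xy}\|$ as follows. Fix $\varepsilon$, take $R$ with $\|\chi_A Q\chi_B\|<\varepsilon$ for $d(A,B)>R$, and split $Q\delta_y = \chi_{B(y,R)}Q\delta_y + (\text{error of norm}\le\varepsilon)$. Then $(QPQ)_{xy}\approx (Q\chi_{B(x,R)} P\chi_{B(y,R)} Q)_{xy}$. By bounded geometry, $\chi_{B(x,R)}P\chi_{B(y,R)}$ has at most $N_X(R)^2$ entries, each small outside a finite set, so its norm is at most $N_X(R)^2\sup\|P_{zw}\|$. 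This gives the ghost property. If operator-valued entries with infinite-dimensional $\H_0$ cause trouble, the same estimate still works, since the norm of an $N\times N$ block operator matrix is at most $N$ times the maximum entry norm.

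For part (2), I would use Kasparov-type reasoning. The operator $2P-1$ is a self-adjoint unitary. Take $T=\tfrac12\cdot\chi$-weighted diagonal truncation: let $E$ be the block-diagonal compression $E(P)=\sum_x \chi_{\{x\}}P\chi_{\{x\}}$. This has propagation $0$, is locally compact because $P$ is, and satisfies $0\le E(P)\le 1$. Then set $T:=\tfrac12\Id$ replaced by $\tfrac12 \cdot$ something in the Roe algebra. The simplest choice is $T=0$, which gives distance $\|P\|=1$ and is too weak. So I would instead take $T = f(E(P))$? This is not directly controlled. A cleaner route is to note that $\|P - \tfrac12 \cdot 1\|=\tfrac12$, but $1\notin C^*$ when $\H_0$ is infinite-dimensional. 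So I would approximate $\tfrac12$ by $\tfrac12\Pi$, with $\Pi=\sum_x \chi_{\{x\}}\otimes \pi_x$ and $\pi_x$ a finite-rank projection on $\H_0$ chosen so that $\|(1-\Pi)P\|$ and $\|P(1-\Pi)\|$ are tiny. Then $\|P-\tfrac12\Pi\|\le \tfrac12 + $ small. If the exact bound is needed, I would take the infimum over such $\Pi$, which gives $\operatorname{dist}\le\tfrac12$. The expected difficulty is arranging the decay uniformly. For this I would use local compactness of $P$ entrywise, via $\chi_{\{x\}}P$ compact, choosing $\pi_x$ so that $\|(1-\pi_x)\chi_{\{x\}}P\|<\varepsilon 2^{-k(x)}$ along an enumeration of $X$, and summing the resulting errors.
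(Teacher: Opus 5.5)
Your part (2) is correct and is essentially the paper's argument. The paper also picks a propagation-zero projection $e=\sum_x e_x$ with finite-rank $e_x$ and $\|(1-e)P\|<\varepsilon$, and then uses $0\le ePe\le e$ to get $\|P-\tfrac12 e\|<\tfrac12+2\varepsilon$. Your summable choice of the $\pi_x$, using compactness of each $\chi_{\{x\}}P$, produces exactly such an $e$.

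Part (1) has a gap at the step from ``$QPQ$ is a ghost'' to ``$Q=g(QPQ)$ is a ghost''. Your entrywise estimate showing that $QPQ$ is a ghost is fine, and so is the spectral gap $\sigma(QPQ)\subseteq\{0\}\cup[1-\|P-Q\|^2,1]$. The problem is the claim that ghosts ``form an algebra''. In $\B(\ell^2(X;\H_0))$ they do not, already for $\H_0=\CC$. Take disjoint finite sets $B_n\subseteq X$ with $|B_n|\to\infty$ and distinct points $y_n$. Define the partial isometry $T$ by $T\delta_{y_n}=|B_n|^{-1/2}\chi_{B_n}$ and $T\delta_y=0$ otherwise. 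Then $T$ and $T^*$ are ghosts, but $T^*T$ is a diagonal projection with infinitely many diagonal entries equal to $1$. The ideal property of $G_q(X;\H_0)$ does not help either. $P$ is only assumed to be a ghost, not quasi-local, and $QPQ$ need not be quasi-local. So the passage to polynomials in $QPQ$ is unjustified as written. Self-adjointness alone is not enough: $T+T^*$ is a self-adjoint ghost whose square is not a ghost.

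The missing ingredient is positivity, which is exactly what the paper exploits. Your route can be repaired in a few lines:
\begin{itemize}
\item Since $Q-QPQ=Q(Q-P)^2Q\le\|P-Q\|^2Q$, you have $QPQ\ge(1-\|P-Q\|^2)Q$.
\item Compressing to $\{x\}$ gives $\|Q\chi_{\{x\}}\|^2=\|Q_{xx}\|\le(1-\|P-Q\|^2)^{-1}\|(QPQ)_{xx}\|\to0$.
\item Then $\|Q_{xy}\|\le\|Q\chi_{\{x\}}\|\,\|Q\chi_{\{y\}}\|$ yields the ghost property, with no functional calculus needed.
\end{itemize}

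The paper's route is more direct still. It uses that $P$ is a projection to get $\|P\chi_{B(x,R)}\|^2\le\sum_{y\in B(x,R)}\|P_{yy}\|\to0$. Combined with quasi-locality of $Q$, this gives $\|PQ\chi_{\{x\}}\|\to0$. It then absorbs via $\|Q\chi_{\{x\}}\|\le\|P-Q\|\,\|Q\chi_{\{x\}}\|+\|PQ\chi_{\{x\}}\|$ and finishes with the same inequality $\|Q_{xy}\|\le\|Q\chi_{\{x\}}\|\,\|Q\chi_{\{y\}}\|$.
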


\begin{proof}
(1). Since \(P\) is a ghost projection, for each fixed \(R>0\) we have
\[
\|P\chi_{B(x,R)}\|^2 \le \sum_{y\in B(x,R)}\|P\chi_{\{y\}}\|^2 =\sum_{y\in B(x,R)}\|P_{yy}\|\le N_X(R) \cdot \sup_{y\in B(x,R)}\|P_{yy}\| \to 0
\]
as $x\to \infty$. Given \(\varepsilon>0\), quasi-locality of \(Q\) provides \(R>0\) such that
\[
\|\chi_{X\setminus B(x,R)}Q\chi_{\{x\}}\|<\varepsilon \quad \text{for any} \quad x\in X.
\]
Hence, $\|PQ\chi_{\{x\}}\| \le \|P\chi_{B(x,R)}\|+\varepsilon$ and hence, \(\|PQ\chi_{\{x\}}\|\to0\) as \(x\to\infty\). Note that
\[
\|Q\chi_{\{x\}}\| \le \|(Q-P)Q\chi_{\{x\}}\|+\|PQ\chi_{\{x\}}\|\le \|P-Q\| \cdot \|Q\chi_{\{x\}}\|+\|PQ\chi_{\{x\}}\|.
\]
Since $\|P-Q\|<1$, the above implies \(\|Q\chi_{\{x\}}\|\to 0\) as \(x\to\infty\). Since $Q$ is a projection, we have $\|Q_{xy}\| \le \|Q\chi_{\{x\}}\|\cdot\|Q\chi_{\{y\}}\|$. This concludes that \(Q\) is a ghost.

(2). Fix $\varepsilon>0$. Since $P$ is locally compact, we can choose a diagonal operator $e:=\mathrm{(SOT)}$-$\sum_{x\in X}e_{x}$, where $e_x$ is a finite-rank projection on $\ell^2(\CC \delta_x; \H_0) \cong \H_0$, such that $\|(1-e)P\|<\varepsilon$. The projection \(e\) is locally compact and has propagation zero, so \(e\in C^*(X;\H_0)\). Moreover, we have
\[
\|P-ePe\| \le \|(1-e)P\|+\|eP(1-e)\| <2\varepsilon.
\]
Since \(0\le ePe\le e\), we have $\left\|ePe-\frac12e\right\|\le\frac12$. Therefore,
\[
\operatorname{dist}\bigl(P,C^*(X;\mathcal H_0)\bigr) \le \left\|P-\frac12e\right\| <\frac12+2\varepsilon.
\]
Letting $\varepsilon \to 0$, we conclude (2).
\end{proof}

Consequently, we obtain the following:

\begin{prop}\label{prop:general ghost compact distance}
Let \(X\) be a discrete metric space of bounded geometry, and let \(\mathcal H_0\) be a separable Hilbert space. Then the following are equivalent:
\begin{enumerate}
 \item Every ghost projection in \(C_q^*(X;\mathcal H_0)\) is compact;
 \item Every ghost projection \(P\in\B(\ell^2(X;\H_0))\) with $\operatorname{dist}(P,C_q^*(X;\H_0))<\frac12$ is compact;
 \item Every locally compact ghost projection \(P\in\B(\ell^2(X;\H_0))\) satisfies:
 \[
\operatorname{dist}\big(P,C_q^*(X;\H_0)) = \operatorname{dist}\big(P,C^*(X;\H_0)\big)
=
\begin{cases}
0,& \text{if }P\text{ is compact};\\
\frac12,&\text{if }P\text{ is non-compact}.
\end{cases}
\]
\end{enumerate}
\end{prop}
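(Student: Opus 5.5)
The plan is to prove the cycle (3) $\Rightarrow$ (1) $\Rightarrow$ (2) $\Rightarrow$ (3), using both parts of Lemma~\ref{lem:ghost projeciton elementary}.

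\textbf{(3) $\Rightarrow$ (1).} Let $P\in C_q^*(X;\H_0)$ be a ghost projection. It is locally compact, and $\operatorname{dist}(P,C_q^*(X;\H_0))=0$. By (3), the value $\frac12$ is excluded, so $P$ is compact.

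\textbf{(1) $\Rightarrow$ (2).} Let $P$ be a ghost projection with $\operatorname{dist}(P,C_q^*(X;\H_0))<\frac12$, and choose $T\in C_q^*(X;\H_0)$ with $\|P-T\|<\frac12$. Replacing $T$ by its real part $(T+T^*)/2$, which still lies in the $C^*$-algebra $C_q^*(X;\H_0)$ and is no farther from $P$, we may assume $T$ is self-adjoint. Since $\sigma(P)\subseteq\{0,1\}$, we get $\sigma(T)\subseteq(-\frac12,\frac12)\cup(\frac12,\frac32)$. Let $g$ be the continuous function equal to $0$ on $(-\infty,\frac12)$ and $1$ on $(\frac12,\infty)$, so that $Q:=g(T)\in C_q^*(X;\H_0)$ is a projection. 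The standard estimate gives $\|Q-T\|<\frac12$: on the spectrum, $|g(s)-s|\le \operatorname{dist}(s,\{0,1\})\le\|P-T\|$, since for self-adjoint $T$ and a projection $P$ the spectrum of $T$ lies within $\|P-T\|$ of $\{0,1\}$. Hence $\|P-Q\|<1$. Lemma~\ref{lem:ghost projeciton elementary}(1) then shows that $Q$ is a ghost. By (1), $Q$ is compact. Finally, two projections at distance $<1$ are unitarily equivalent, or at least $P|_{\ran Q}$ and $Q|_{\ran P}$ are injective, so $\operatorname{rank}P=\operatorname{rank}Q<\infty$. Thus $P$ is compact. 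I expect this step to be the main obstacle: the functional calculus and the distance bookkeeping are where care is needed, and it must be checked that $C_q^*(X;\H_0)$ is a $C^*$-algebra closed under continuous functional calculus. I would verify the latter directly from the definition: quasi-locality and local compactness are preserved under sums, products, adjoints and norm limits.

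\textbf{(2) $\Rightarrow$ (3).} Let $P$ be a locally compact ghost projection. If $P$ is compact, then $P\in C^*(X;\H_0)\subseteq C_q^*(X;\H_0)$, so both distances are $0$. If $P$ is non-compact, then (2) gives $\operatorname{dist}(P,C_q^*(X;\H_0))\ge\frac12$. Since $C^*(X;\H_0)\subseteq C_q^*(X;\H_0)$ and Lemma~\ref{lem:ghost projeciton elementary}(2) applies, we obtain
\[
\tfrac12\le\operatorname{dist}\big(P,C_q^*(X;\H_0)\big)\le\operatorname{dist}\big(P,C^*(X;\H_0)\big)\le\tfrac12 .
\]
So both distances equal $\frac12$.
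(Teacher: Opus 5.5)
Your proposal is correct and follows the paper's proof essentially step for step: the same cycle (3)$\Rightarrow$(1)$\Rightarrow$(2)$\Rightarrow$(3), the same functional-calculus projection $Q=\chi_{(1/2,\infty)}(T)\in C_q^*(X;\H_0)$ with $\|P-Q\|\le 2\|P-T\|<1$, and the same use of both parts of Lemma~\ref{lem:ghost projeciton elementary}. The extra details you supply (passing to the real part of $T$, injectivity of $Q|_{\ran P}$ to bound the rank of $P$, and the inclusion $C^*(X;\H_0)\subseteq C_q^*(X;\H_0)$ giving the distance comparison) are exactly the steps the paper leaves implicit.
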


\begin{proof}
``(1) $\Rightarrow$ (2)'': Let \(P\in\B(\ell^2(X;\H_0))\) be a ghost projection which satisfies $\operatorname{dist}(P,C_q^*(X;\H_0))<\frac12$. Choose self-adjoint \(T\in C_q^*(X;\H_0)\) such that $\delta:=\|P-T\|<\frac12$. Since \(P\) is a projection, we have $\sigma(T)\subseteq[-\delta,\delta]\cup[1-\delta,1+\delta]$. Hence, functional calculus provides a projection $Q:=\chi_{(1/2,\infty)}(T)\in C_q^*(X;\H_0)$ satisfying
\[
\|P-Q\|
\le\|P-T\|+\|T-Q\|
\le2\delta<1.
\]
By Lemma~\ref{lem:ghost projeciton elementary}, \(Q\) is a ghost projection and hence compact by (1). Hence, $Q$ has finite rank. Since \(\|P-Q\|<1\), the projection \(P\) must also have finite rank.

``(2) $\Rightarrow$ (3)'': Let \(P\in\B(\ell^2(X;\H_0))\) be a locally compact ghost projection. If \(P\) is compact, then \(P\in C^*(X;\H_0)\), so both distances are zero. Now suppose that \(P\) is non-compact. By (2), we have $\operatorname{dist}(P,C_q^*(X;\H_0))\geq \frac12$. Combining with Lemma~\ref{lem:ghost projeciton elementary}(2), we conclude (3).
Finally, ``(3) $\Rightarrow$ (1)'' is trivial.
\end{proof}

To verify condition (1) in Proposition~\ref{prop:general ghost compact distance} for coarsely embeddable sparse spaces, we will use Theorem~\ref{thm:esc and ql coupling}. Let $\H$ be a Hilbert space and $T \in \B(\H)$. If $T$ is trace class, denote by $\Tr(T)$ the trace of $T$ and by $\|T\|_1$ the trace norm of $T$. If $T$ is Hilbert-Schmidt, denote $\|T\|_2$ the Hilbert-Schmidt norm of $T$.

%We introduce the following result

\begin{prop}\label{prop:traceratio}
Let $\{(X_n,d_n)\}_{n\in \NN}$ be a sequence of finite metric spaces of uniformly bounded geometry, $X$ be their coarse disjoint union, and $\H_0$ be a separable Hilbert space. Assume $X$ coarsely embeds into Hilbert space. Let $T:=\mathrm{(SOT)}$-$\sum_{n}T_n \in C^*_q(X;\H_0)$ be a positive ghost operator such that each $T_n$ belongs to the trace class. If
\begin{equation}\label{eq:trbound}
 \sup_{n:T_n\neq 0} \frac{\Tr(T_n)}{\Tr(T_n^2)}<\infty,
\end{equation}
then only finitely many $T_n$ are non-zero. In particular, $T$ is compact.
\end{prop}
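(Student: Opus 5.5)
The plan is to argue by contradiction: if infinitely many $T_n$ are non-zero, I will build an escaping quasi-local coupling on $X$, which Theorem~\ref{thm:esc and ql coupling} forbids because $X$ coarsely embeds into Hilbert space. Relabel the indices with $T_n\neq 0$ as an infinite sequence. Let $C$ be the supremum in \eqref{eq:trbound}. For each such $n$ define
\[
 \nu_n(x,y):=\frac{\|(T_n)_{xy}\|_2^2}{\Tr(T_n^2)} \quad\text{for}\quad x,y\in X_n,
\]
and set $\nu_n:=0$ off $X_n\times X_n$. Since $T_n$ is trace class, it is Hilbert--Schmidt. We have $\sum_{x,y}\|(T_n)_{xy}\|_2^2=\|T_n\|_2^2=\Tr(T_n^2)$, so $\nu_n$ is a probability measure, finitely supported because $X_n$ is finite. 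It is symmetric since $T_n$ is self-adjoint. More generally,
\[
 \nu_n(A\times B)=\frac{\|\chi_AT_n\chi_B\|_2^2}{\Tr(T_n^2)} \quad\text{for all}\quad A,B\subseteq X.
\]

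The key estimate uses positivity of $T$. Since $\|\chi_AT\chi_B\|_2^2\le \|\chi_AT\chi_B\|\cdot\|\chi_AT\chi_B\|_1$, it suffices to control the trace norm. Using $\chi_AT_n\chi_B=(\chi_AT_n^{1/2})(T_n^{1/2}\chi_B)$ and H\"older's inequality for Schatten norms,
\[
 \|\chi_AT_n\chi_B\|_1\le\|\chi_AT_n^{1/2}\|_2\,\|T_n^{1/2}\chi_B\|_2\le\Tr(T_n).
\]
Hence
\[
 \nu_n(A\times B)\le C\,\|\chi_AT\chi_B\| \quad\text{for all } n \text{ and all } A,B\subseteq X.
\]
Condition \textbf{(Q)} now follows at once from the quasi-locality of $T$: given $\varepsilon>0$, choose $R$ with $\|\chi_AT\chi_B\|<\varepsilon/C$ whenever $d(A,B)>R$.

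For condition \textbf{(E)}, fix $S>0$. For large $n$ the $S$-balls around points of $X_n$ stay inside $X_n$, since the distance between different components tends to infinity. Write $\nu_n(E_S)=\sum_{x\in X_n}\nu_n(\{x\}\times B(x,S))$. Refining the previous estimate with $A=\{x\}$ and $B=B(x,S)$, I get
\[
 \|\chi_{\{x\}}T_n\chi_{B(x,S)}\|_2^2\le\|\chi_{B(x,S)}T^{1/2}\|^2\,\|\chi_{\{x\}}T_n^{1/2}\|_2^2=\|\chi_{B(x,S)}T\chi_{B(x,S)}\|\,\Tr\bigl((T_n)_{xx}\bigr).
 \]
Summing over $x\in X_n$ gives
\[
 \nu_n(E_S)\le C\sup_{x\in X_n}\|\chi_{B(x,S)}T\chi_{B(x,S)}\|.
\]
The operator $\chi_{B(x,S)}T\chi_{B(x,S)}$ is a block matrix of size at most $N_X(S)$, so its norm is at most $N_X(S)\max_{y,z\in B(x,S)}\|T_{yz}\|$. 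This tends to $0$ as $x\to\infty$ because $T$ is a ghost. Since $X_n$ eventually leaves every finite set, $\nu_n(E_S)\to0$.

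Thus $\{\nu_n\}_n$ is an escaping quasi-local coupling, contradicting Theorem~\ref{thm:esc and ql coupling}. So only finitely many $T_n$ are non-zero, and $T$ is a finite sum of trace-class operators, hence compact. I expect the main obstacle to be finding the right normalisation of $\nu_n$ and the trace-norm/H\"older estimate that converts the operator-norm smallness of off-diagonal corners, coming from quasi-locality and the ghost property, into smallness of Hilbert--Schmidt mass. That step is exactly where positivity of $T$ and the ratio bound \eqref{eq:trbound} are used.
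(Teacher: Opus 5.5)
Your proposal is correct and matches the paper's proof. You use the same Hilbert--Schmidt measure $\nu_n=\|(T_n)_{xy}\|_2^2/\Tr(T_n^2)$, and for \textbf{(Q)} the same bound $\nu_n(A\times B)\le C\|\chi_AT\chi_B\|$ via $\|S\|_2^2\le\|S\|\,\|S\|_1$ and H\"older, ending in a contradiction with Theorem~\ref{thm:esc and ql coupling}. The only cosmetic difference is in \textbf{(E)}: the paper uses $T_{xy}^*T_{xy}\le\|T_{xx}\|\,T_{yy}$ to get $\nu_n(E_S)\le C\,N_X(S)\sup_{x}\|(T_n)_{xx}\|$, while your row-wise bound $\|\chi_{\{x\}}T_n\chi_{B(x,S)}\|_2^2\le\|\chi_{B(x,S)}T\chi_{B(x,S)}\|\,\Tr((T_n)_{xx})$ gives an equivalent estimate.
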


To prove Proposition~\ref{prop:traceratio}, we need the following auxiliary result.

\begin{lem}\label{lem:trace coupling}
Let $(Y,d)$ be a finite metric space and $\H_0$ be a separable Hilbert space. For a non-zero positive trace-class operator $T \in \B(\ell^2(Y;\H_0))$, set
\[
c_T=\frac{\Tr(T)}{\Tr(T^2)} \quad \text{and} \quad \nu_T(x,y)=\frac{\|T_{xy}\|_{2}^2}{\Tr(T^2)} \quad \text{for} \quad x,y\in Y.
\]
Then $\nu_T$ is a symmetric probability measure. Moreover, we have:
\begin{enumerate}
 \item For any $A,B \subseteq Y$, $\nu_T(A\times B) \leq c_T \cdot \|\chi_AT\chi_B\|$;
 \item For any $R \geq 0$,
 \[
 \nu_T(\{(x,y) \in Y^2: d(x,y) \leq R\}) \leq c_T \cdot \sup_{x\in Y}|B(x,R)| \cdot \sup_{x\in Y}\|T_{xx}\|.
 \]
\end{enumerate}
\end{lem}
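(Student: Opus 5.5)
The plan is to verify each claim directly from the block decomposition $T=(T_{xy})_{x,y\in Y}$, where $T_{xy}\in\B(\H_0)$, together with the Hilbert--Schmidt norm. Since $T$ is positive and trace class, it is Hilbert--Schmidt, and
\[
\Tr(T^2)=\|T\|_2^2=\sum_{x,y\in Y}\|T_{xy}\|_2^2 .
\]
So $\nu_T$ is non-negative and sums to $1$. Symmetry follows from $T=T^*$: indeed $T_{yx}=T_{xy}^*$, and adjoints preserve the Hilbert--Schmidt norm. Also $\Tr(T^2)>0$ because $T\neq0$.

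For (1), first observe that
\[
\sum_{x\in A,\,y\in B}\|T_{xy}\|_2^2=\|\chi_AT\chi_B\|_2^2 .
\]
I then write $\|\chi_AT\chi_B\|_2^2=\Tr(\chi_BT\chi_AT\chi_B)$. The aim is to bound this by $\|\chi_AT\chi_B\|\cdot\Tr(T)$. Using $|\Tr(SK)|\le\|S\|\,\|K\|_1$ with $S=\chi_B T\chi_A$ and $K=\chi_A T\chi_B$ would give only $\|\chi_AT\chi_B\|\cdot\|\chi_AT\chi_B\|_1$, so the remaining task is to show $\|\chi_AT\chi_B\|_1\le\Tr(T)$.

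This is the one step needing care. Since $T\ge0$, I factor $T=T^{1/2}T^{1/2}$, so that
\[
\chi_AT\chi_B=(\chi_AT^{1/2})(T^{1/2}\chi_B).
\]
Hölder's inequality for Schatten norms then gives
\[
\|\chi_AT\chi_B\|_1\le\|\chi_AT^{1/2}\|_2\,\|T^{1/2}\chi_B\|_2=\Tr(\chi_AT)^{1/2}\Tr(\chi_BT)^{1/2}\le\Tr(T).
\]
Dividing $\|\chi_AT\chi_B\|_2^2\le\|\chi_AT\chi_B\|\cdot\Tr(T)$ by $\Tr(T^2)$ yields (1).

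For (2), fix $x$ and use the positivity bound $\|T_{xy}\|_2^2\le\Tr(T_{xx})\|T_{yy}\|$. To see this, write $T_{xy}=V_x^*V_y$, where $V_z:=T^{1/2}\iota_z$ and $\iota_z$ is the inclusion of the $z$-th copy of $\H_0$. Then
\[
\|V_x^*V_y\|_2\le\|V_x\|_2\,\|V_y\|=\Tr(T_{xx})^{1/2}\|T_{yy}\|^{1/2}.
\]
Summing over pairs with $d(x,y)\le R$ gives
\[
\sum_{d(x,y)\le R}\|T_{xy}\|_2^2\le\sum_x\Tr(T_{xx})\cdot|B(x,R)|\cdot\sup_y\|T_{yy}\|\le\Tr(T)\cdot\sup_x|B(x,R)|\cdot\sup_y\|T_{yy}\| .
\]
Dividing by $\Tr(T^2)$ gives (2). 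No real obstacle is expected beyond choosing the factorisation through $T^{1/2}$ in both parts.
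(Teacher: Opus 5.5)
Your proposal is correct and follows essentially the same route as the paper. In (1) you factor through $T^{1/2}$ and use H\"older to get $\|\chi_AT\chi_B\|_1\le\Tr(T)$, then combine it with $\|S\|_2^2\le\|S\|\,\|S\|_1$, which you derive via $|\Tr(SK)|\le\|S\|\,\|K\|_1$. In (2) you use the same factorisation $T_{xy}=V_x^*V_y$ with $V_x=T^{1/2}\iota_x$; bounding $\|V_x^*V_y\|_2\le\|V_x\|_2\|V_y\|$ directly, instead of using the paper's operator inequality $T_{xy}^*T_{xy}\le\|T_{xx}\|\,T_{yy}$ (which swaps the roles of $x$ and $y$), is an immaterial variation.
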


\begin{proof}
Since $T$ is a nonzero positive trace-class operator, $0<\Tr(T^2)<\infty$. Also
\[
 \sum_{x,y\in Y}\|T_{xy}\|_2^2=\|T\|_2^2=\Tr(T^2),
\]
and $T_{yx}=T_{xy}^*$ gives the symmetry of $\nu_T$. For any $A,B \subseteq Y$, both $\chi_A T^{1/2}$ and $\chi_B T^{1/2}$ are Hilbert-Schmidt. Then we have
\[
\|\chi_A T \chi_B\|_1 \leq \|\chi_A T^{1/2}\|_2\cdot \|T^{1/2}\chi_B\|_2 =\sqrt{\Tr(\chi_A T \chi_A)\Tr(\chi_B T \chi_B)}\leq\Tr(T).
\]
Moreover, recall that $\|S\|_2^2 \leq \|S\| \cdot \|S\|_1$ for any trace class operator $S$. Hence,
\[
\nu_T(A\times B)=\frac{\|\chi_AT\chi_B\|_2^2}{\Tr(T^2)} \leq\frac{\Tr(T)}{\Tr(T^2)} \|\chi_AT\chi_B\|,
\]
which concludes (1).

As for (2), let $J_x:\H_0\to \ell^2(Y;\H_0)$ be the inclusion at $x \in Y$, and put $B_x=T^{1/2}J_x$. Then $T_{xy}=B_x^*B_y$ and $\|B_x\|^2=\|T_{xx}\|$. Hence
\[
 T_{xy}^*T_{xy}
 =B_y^*B_xB_x^*B_y
 \le\|T_{xx}\|\,B_y^*B_y
 =\|T_{xx}\|\,T_{yy}.
\]
Therefore, we obtain
\begin{align*}
 \sum_{d(x,y)\le R}\|T_{xy}\|_2^2 &=\sum_{d(x,y)\le R}\Tr(T_{xy}^*T_{xy}) \leq\sup_{x\in Y}\|T_{xx}\| \cdot \sum_{y\in Y}|B_Y(y,R)|\cdot \Tr(T_{yy}) \\
 &\leq\sup_{x\in Y}\|T_{xx}\| \cdot \sup_{x\in Y}|B(x,R)| \cdot \Tr(T).
\end{align*}
Dividing by $\Tr(T^2)$, we conclude (2).
\end{proof}

\begin{proof}[Proof of Proposition~\ref{prop:traceratio}]
Assume that there are infinitely many $n\in \NN$ such that $T_n \neq 0$, and we pass to that subsequence. Denote $C:=\sup_{n} \Tr(T_n)/\Tr(T_n^2)$, and let $\nu_n=\nu_{T_n}$ be the measures from Lemma~\ref{lem:trace coupling}.

Given $\varepsilon>0$, there exists $R>0$ such that $\|\chi_A T \chi_B\|<\varepsilon/C$ whenever $d(A,B)>R$. For $A,B\subseteq X$ with $d(A,B)>R$, Lemma~\ref{lem:trace coupling}(1) gives
\[
 \nu_n(A\times B) = \nu_n\left((A \cap X_n)\times (B \cap X_n)\right) \le C\|\chi_{A \cap X_n}T_n\chi_{B \cap X_n}\|
 =C\|\chi_{A \cap X_n}T\chi_{B \cap X_n}\|<\varepsilon.
\]
On the other hand, Lemma~\ref{lem:trace coupling}(2) gives
\[
 \nu_n(\{(x,y) \in X_n^2: d(x,y) \leq S\})\leq C\cdot \sup_{x\in X_n} |B(x,S)| \cdot \sup_{x\in X_n} \|(T_n)_{xx}\|,
\]
which goes to $0$ since $T$ is a ghost. Hence, $\nu_n$ provides an escaping quasi-local coupling. This contradicts the coarse embeddability of $X$ by Theorem~\ref{thm:esc and ql coupling}.
\end{proof}

%\begin{cor}\label{cor:ghost proj are compact}
%Let $\{(X_n,d_n)\}_{n\in \NN}$ be a sequence of finite metric spaces of uniformly bounded geometry, $X$ be their coarse disjoint union, and $\H_0$ be a Hilbert space. If $X$ can be coarsely embedded into Hilbert space, then any ghost projection in $C^*_q(X;\H_0)$ is compact.
%\end{cor}

Now we prove Theorem~\ref{prop:ghost proj are compact}.

\begin{proof}[Proof of Theorem~\ref{prop:ghost proj are compact}]
Thanks to Proposition~\ref{prop:general ghost compact distance}, it suffices to show that every ghost projection in \(C_q^*(X;\mathcal H_0)\) is compact. We may assume that \(\H_0\) is infinite-dimensional by enlarging the coefficient space if necessary.
Fix a ghost projection $P \in C^*_q(X;\H_0)$, and set $A:=\mathrm{(SOT)}$-$\sum_{n} \chi_{X_n} P \chi_{X_n}$. Simply denote $A_n:=\chi_{X_n} P \chi_{X_n}$ for each $n$, which is a compact positive contraction. It follows from \cite[Lemma 3.14]{BCZ23} that $P-A$ is compact. Since $P$ is a projection, $A$ is self-adjoint and $A^2 - A$ is compact, which implies that $\|A_n^2 - A_n\| \to 0$.

For each $n\in \NN$, define $Q_n:=\chi_{[\frac{1}{2},1]}(A_n)$, which is a finite-rank projection. From functional calculus, we obtain
\[
\|A_n - Q_n\| \leq 2\|A_n - A_n^2\| \to 0\quad \text{as} \quad  n\to \infty.
\]
Setting $Q:=\mathrm{(SOT)}$-$\sum_{n}Q_n$, we obtain that $Q - A$ is compact and hence, $Q - P$ is also compact. Moreover, $Q$ is a ghost projection. Note that $\Tr(Q_n^2) = \Tr(Q_n)$ for each $n$. Proposition~\ref{prop:traceratio} implies that $Q$ is compact; and hence $P$ is compact.
\end{proof}

\subsection{A non-embeddable example}

We provide an example showing that the converse to Theorem~\ref{prop:ghost proj are compact} does not hold in general. More precisely, we show:

\begin{prop}\label{prop:ghost proj are compact converse}
There exists a uniformly discrete bounded geometry sparse metric space $X$ which does not coarsely embed into any Hilbert space, while every ghost projection in $C^*_q(X;\H_0)$ is compact for any separable Hilbert space $\H_0$.
\end{prop}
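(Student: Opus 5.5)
We plan to use the Delabie--Khukhro construction from \cite{DK18}. They build a sequence of finite Cayley graphs (box spaces of a suitable group, e.g.\ a lamplighter-type or wreath-product construction) whose coarse disjoint union $X=\bigsqcup_n X_n$ does not coarsely embed into Hilbert space. At the same time, $X$ admits a fibred coarse embedding into Hilbert space, or more precisely has the \emph{coarse fundamental groups} structure that makes it a ``relative expander'' rather than a genuine expander. The relevant feature is that $X$ contains no weakly embedded expanders: the obstruction to embeddability is not spectral in the sense of projections. We take $X$ with the graph metric, so it is uniformly discrete, of bounded geometry and sparse.

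The key steps, in order, are as follows.

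First, reduce as in the proof of Theorem~\ref{prop:ghost proj are compact}. Given a ghost projection $P\in C^*_q(X;\H_0)$, use \cite[Lemma 3.14]{BCZ23} and functional calculus to replace $P$, up to a compact perturbation, by a block-diagonal ghost projection $Q=\sum_n Q_n$ with $Q_n$ of finite rank. The compression $\chi_{X_n}P\chi_{X_n}$ is quasi-local uniformly in $n$, so $Q$ still lies in $C^*_q(X;\H_0)$. Hence it suffices to show that a block-diagonal quasi-local ghost projection has only finitely many non-zero blocks.

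Second, assume infinitely many $Q_n\neq0$. By Lemma~\ref{lem:trace coupling}, with $c_{Q_n}=1$, the measures $\nu_{Q_n}$ then form an escaping quasi-local coupling supported on the blocks. So the task becomes a geometric one: show that the specific DK space admits no escaping quasi-local coupling of the special form $\nu(x,y)=\|(Q_n)_{xy}\|_2^2/\mathrm{rank}(Q_n)$ coming from projections. We would exploit that $X_n$ covers a sequence of graphs $\bar X_n$ that do embed (uniformly) into Hilbert space, via coverings $\pi_n:X_n\to\bar X_n$ that are isometric on balls of radius $r_n\to\infty$. Pushing $Q_n$ forward, or averaging over the deck group, should give a quasi-local projection-like operator on $\bigsqcup\bar X_n$. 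This space is coarsely embeddable, so Theorem~\ref{prop:ghost proj are compact} applied there yields compactness.

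Third, transfer back. Here we use that the deck group of $\pi_n$ is finite and acts on $X_n$ by isometries. Quasi-locality at scale $R\ll r_n$ lets us compare $Q_n$ with the lifted operator, giving the contradiction with $\mathrm{rank}(Q_n)\geq1$ together with the ghost condition.

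The main obstacle is the third step, namely controlling how a quasi-local (not finite-propagation) projection interacts with the covering. Quasi-locality only gives approximate locality, and the pushforward of a projection is merely a positive contraction. We would first try to apply Proposition~\ref{prop:traceratio} on the quotient space $\bigsqcup\bar X_n$ instead of needing projections there. The pushed-forward operators $\bar T_n=\sum_{g}(\pi_n)_*Q_n$ satisfy $\Tr(\bar T_n)=\Tr(Q_n)$, and, as the key estimate to check, $\Tr(\bar T_n^2)\gtrsim\Tr(Q_n^2)$ up to a quasi-locality error. This bounds the ratio in \eqref{eq:trbound} uniformly. Proposition~\ref{prop:traceratio} on the embeddable quotient then forces all but finitely many blocks to vanish, which is the desired contradiction.
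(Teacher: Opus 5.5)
Your strategy is the right one: push the ghost projection along the covering onto the coarsely embeddable quotient $Y=\bigsqcup_n\bar X_n$ and use Theorem~\ref{prop:ghost proj are compact} or Proposition~\ref{prop:traceratio} there. As sketched, however, the decisive step is never defined and would fail. (The Delabie--Khukhro example is a box space of $\mathbb{F}_3$, namely $\bigsqcup_i\mathbb{F}_3/(N_{n_i}\cap\Theta(N_{k_i}))$ covering the embeddable $\bigsqcup_i\mathbb{F}_3/\Theta(N_{k_i})$; the absence of expanders plays no role.)

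\textbf{The push-forward.} You never specify it, and with scalar coefficients there is no good choice. Since $\pi_n$ is not injective, $\dim\ell^2(X_n)>\dim\ell^2(\bar X_n)$, so no isometry covering $\pi_n$ exists. Compressing to deck-invariant vectors gives a positive contraction, but it does not preserve the trace and can annihilate $Q_n$ entirely (if $\ran(Q_n)$ is orthogonal to the invariant vectors). The trace-preserving ``sum over the deck group'' $(\bar T)_{uv}=\sum_{x\in\pi^{-1}(u)}T_{x,\tilde v(x)}$ needs a nearby lift $\tilde v(x)$. It is therefore only defined after truncating $Q_n$ below the injectivity radius. It also inflates norms by the fibre cardinality, since it sends $\Id$ to $|\pi^{-1}(u)|\cdot\Id$. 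So nothing you wrote makes $\bigoplus_n\bar T_n$ a bounded element of $C^*_q$. The estimate $\Tr(\bar T_n^2)\gtrsim\Tr(Q_n^2)$ that you single out is not the real issue.

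\textbf{The ghost property.} More seriously, you never check that the pushed-forward operator is a \emph{ghost}. Both Theorem~\ref{prop:ghost proj are compact} and Proposition~\ref{prop:traceratio} require this. It is the genuinely non-trivial point: each matrix entry of a push-forward aggregates whole fibres, and a sum of many small entries need not be small.

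\textbf{The missing idea} is to let the coefficient space absorb the fibres. For infinite-dimensional separable $\H_0$ there is an isometry $V\colon\ell^2(X;\H_0)\to\ell^2(Y;\H_0)$ covering $\pi$ (e.g.\ \cite[Proposition 4.3.4]{WY20}). Then:
\begin{itemize}
\item $VPV^*$ is again a projection, and it is quasi-local by \cite[Proposition 2.9]{BCZ23}.
\item The ``transfer back'' you regard as the main obstacle is trivial, since $P=V^*(VPV^*)V$.
\item The real work is Lemma~\ref{lem:covering maps ghosts to ghosts}(2). One has $\|(VPV^*)_{yz}\|\le\|\chi_{\{y\}}VP\|\le N_Y(C)\sup_{w\in B_Y(y,C)}\|\chi_{\pi^{-1}(w)}P\|$. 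Because the fibre separation $s_n\to\infty$, Lemma~\ref{lem:cutoff of quasi-local operators} shows that $\chi_{\pi^{-1}(w)}P\chi_{\pi^{-1}(w)}$ is within $2\delta$ of its diagonal part, and that part is small by the ghost property.
\end{itemize}
Hence $VPV^*$ is a ghost projection in $C^*_q(Y;\H_0)$, compact by Theorem~\ref{prop:ghost proj are compact}, and so $P$ is compact. Finite-dimensional $\H_0$ (including $\H_0=\CC$) is handled by embedding it into an infinite-dimensional space. No block-diagonal reduction or trace-ratio estimate on $X$ is needed.
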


\begin{rem}
The Roe-algebra analogue of Proposition~\ref{prop:ghost proj are compact converse} is already known. Indeed, \cite[Proposition 35]{Fin14} shows that $\KK(\ell^2(X;\H_0)) \hookrightarrow G(X;\H_0)$ induces an isomorphism on \(K\)-theory for sparse spaces satisfying the coarse Baum--Connes conjecture and admitting a fibred coarse embedding into Hilbert space. The box space of the free group \(\mathbb{F}_3\) constructed in \cite{DK18} provides such an example: it does not coarsely embed into Hilbert space, but it satisfies the coarse Baum--Connes conjecture by \cite[Corollary 1.2]{DWY23} and admits a fibred coarse embedding by \cite{CWW13}.
\end{rem}

To prove Proposition~\ref{prop:ghost proj are compact converse}, we follow the construction in \cite[Sections 3.1 and 3.2]{DK18}. Using their notation, take $q=29$ and set $\Gamma = \mathbb{F}_3$ to be the free group generated by three elements. For each $n$, take $N_n:=\Gamma(2q^n)$, defined in \cite[Section 3.1]{DK18}. Also denote $\Theta(N_n):=N_n^q[N_n,N_n]$, where $N_n^q$ denotes the subgroup in $\mathbb{F}_3$ generated by $\{g^q: g\in N_n\}$. We equip $\mathbb{F}_3$ with the word length metric, and all the finite quotients with the quotient metrics.
By \cite[Theorem 4.5]{DK18}, there exist two increasing sequences $\{k_i\}_i$ and $\{n_i\}_i$ in $\NN$ such that the coarse disjoint union $X:=\bigsqcup_i \mathbb{F}_3/(N_{n_i} \cap \Theta(N_{k_i}))$ does not coarsely embed into any Hilbert space. We will prove that $X$ provides the example required in Proposition~\ref{prop:ghost proj are compact converse}.

To see this, we need some preliminary results. First, we record the following elementary estimate, closely related to \cite[Corollary 4.3]{ST19}. We include a direct proof in order to keep track of the explicit constant needed below.

\begin{lem}\label{lem:cutoff of quasi-local operators}
Given $T \in C^*_q(X;\H_0)$ and $\delta>0$, choose $R>0$ such that for any $A,B \subseteq X$ with $d(A,B) > R$, then $\|\chi_A T \chi_B\| < \delta$. Let $F$ be a finite subset of $X$ with $d(x,x') > R$ for any distinct points $x,x' \in F$. Then
\[
  \big\|\chi_F T \chi_F - \sum_{x\in F} \chi_{\{x\}}T\chi_{\{x\}} \big\|\leq 2\delta.
\]
\end{lem}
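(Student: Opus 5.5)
The operator $\chi_F T\chi_F - \sum_{x\in F}\chi_{\{x\}}T\chi_{\{x\}}$ is the off-diagonal part of $\chi_F T\chi_F$ with respect to the decomposition $\ell^2(F;\H_0)=\bigoplus_{x\in F}\ell^2(\{x\};\H_0)$. Write $W$ for it; its block entries are $T_{xy}$ for $x\neq y$ in $F$. The plan is to express $W$ as an average of operators of the form $\chi_A T\chi_B + \chi_B T\chi_A$ with $A,B\subseteq F$ disjoint. For such sets, $d(A,B)>R$ automatically, since any two distinct points of $F$ are more than $R$ apart.

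\textbf{Step 1: random cuts.} Let $\epsilon=(\epsilon_x)_{x\in F}$ range uniformly over $\{0,1\}^F$. For each $\epsilon$ set $A_\epsilon=\{x\in F:\epsilon_x=1\}$ and $B_\epsilon=F\setminus A_\epsilon$. A pair $x\ne y$ lies in $(A_\epsilon\times B_\epsilon)\cup(B_\epsilon\times A_\epsilon)$ with probability exactly $1/2$, while diagonal pairs never do. Comparing matrix entries therefore gives
\[
W = 2\,\mathbb{E}_\epsilon\big[\chi_{A_\epsilon}T\chi_{B_\epsilon}+\chi_{B_\epsilon}T\chi_{A_\epsilon}\big].
\]

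\textbf{Step 2: norm of each summand.} For fixed $\epsilon$, the operator $Z=\chi_{A}T\chi_{B}+\chi_{B}T\chi_{A}$ is block anti-diagonal with respect to $\ell^2(A;\H_0)\oplus\ell^2(B;\H_0)$. Its norm is therefore the maximum of $\|\chi_AT\chi_B\|$ and $\|\chi_BT\chi_A\|$, not their sum. Both are less than $\delta$ because $d(A,B)>R$; if $A$ or $B$ is empty, $Z=0$. Hence $\|Z\|\le\delta$.

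\textbf{Step 3: conclude.} Taking expectations and applying the triangle inequality gives $\|W\|\le 2\delta$, as required.

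The only delicate point is getting the constant $2$ rather than $4$. This relies on using the anti-diagonal structure in Step 2 to get the maximum instead of the sum. Everything else is bookkeeping on matrix entries, and since $F$ is finite no convergence issues arise.
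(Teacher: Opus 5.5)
Your proof is correct and is essentially the paper's argument. The paper averages $U_\varepsilon(\chi_FT\chi_F)U_\varepsilon$ over the sign unitaries $U_\varepsilon=\sum_{x\in F}\varepsilon(x)\chi_{\{x\}}$ and uses $\chi_FT\chi_F-U_\varepsilon(\chi_FT\chi_F)U_\varepsilon=2(\chi_{F_+}T\chi_{F_-}+\chi_{F_-}T\chi_{F_+})$, which is exactly your random-cut identity; the same block anti-diagonal bound $\|\chi_{F_+}T\chi_{F_-}+\chi_{F_-}T\chi_{F_+}\|\le\delta$ then gives the constant $2\delta$.
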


\begin{proof}
For a map $\varepsilon: F \to \{-1,1\}$, set $U_\varepsilon:=\sum_{x\in F}\varepsilon(x) \chi_{\{x\}} \in \B(\ell^2(F;\H_0))$. Averaging over all choices of \(\varepsilon\) gives
\begin{equation*}\label{EQ:averaging}
\frac1{2^{|F|}}
\sum_{\varepsilon\in\{-1,1\}^{F}}
U_\varepsilon(\chi_FT\chi_F)U_\varepsilon
=
\sum_{x\in F}\chi_{\{x\}}T\chi_{\{x\}}
\quad \text{since} \quad
\frac1{2^{|F|}}
\sum_{\varepsilon}
\varepsilon(x)\varepsilon(y) =
\begin{cases}
0,& \text{if } x\neq y;\\
 1, & \text{if } x=y.
\end{cases}
\end{equation*}

Fixing a map \(\varepsilon\), set $F_+:=\{x\in F:\varepsilon(x)=1\}$ and $F_-:=\{x\in F:\varepsilon(x)=-1\}$. Then
\[
\|\chi_FT\chi_F-
U_\varepsilon(\chi_FT\chi_F)U_\varepsilon\|
= 2 \|\chi_{F_+}T\chi_{F_-} + \chi_{F_-}T\chi_{F_+}\| \leq 2\delta.
\]
Combining them together, we obtain
\[
\big\| \chi_FT\chi_F - \sum_{x\in F}\chi_{\{x\}}T\chi_{\{x\}} \big\| =
\big\|\frac1{2^{|F|}}\sum_{\varepsilon}\big(\chi_FT\chi_F-U_\varepsilon(\chi_FT\chi_F)U_\varepsilon\big)\big\|
\leq 2\delta.
\]
This finishes the proof.
\end{proof}

The following result is inspired by \cite[Proposition 6.4]{KLVZ21}.

\begin{lem}\label{lem:covering maps ghosts to ghosts}
Let $\{X_n\}_n, \{Y_n\}_n$ be two sequences of finite metric spaces of uniformly bounded geometry, and $(X,d_X), (Y,d_Y)$ be their coarse disjoint unions, respectively. For each $n$, let $\pi_n: X_n \to Y_n$ be a surjective map, and set $\pi:=\bigsqcup_n \pi_n: X \to Y$. Assume that $\pi$ is coarse. Let $\H_0$ be an infinite-dimensional separable Hilbert space, and $V: \ell^2(X;\H_0) \to \ell^2(Y;\H_0)$ be an isometry covering $\pi$. Then:
\begin{enumerate}
 \item For any $T \in C^*_q(X;\H_0)$, we have $\Ad_V(T) \in C^*_q(Y;\H_0)$;
 \item For each $n\in \NN$, denote $s_n:=\inf\{d_X(x,x'):x\neq x' \text{ in } X_n \text{ with } \pi_n(x)=\pi_n(x')\}$, where $\inf\emptyset=+\infty$. If $s_n \to \infty$, then $\Ad_V(T) \in G_q(Y;\H_0)$ for any $T \in G_q(X;\H_0)$.
\end{enumerate}
\end{lem}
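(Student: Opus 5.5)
For (1), I will check local compactness and quasi-locality of $\Ad_V(T)=VTV^*$ separately. Let $C>0$ be the covering constant of $V$, so $V_{yx}=0$ unless $d_Y(\pi(x),y)\le C$. For a finite $F\subseteq Y$, we have $\chi_F V=\chi_F V\chi_{F'}$ with $F':=\pi^{-1}(\N_C(F))$. The set $F'$ is finite: $\N_C(F)$ is bounded, $\pi$ is coarse, so $F'$ is bounded, and $X$ is locally finite. Hence $\chi_F VTV^*=\chi_F V\chi_{F'}TV^*$ is compact, and the same argument on the right handles $VTV^*\chi_F$.

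For quasi-locality, take $A,B\subseteq Y$ with $d_Y(A,B)>R'$. Then
\[
\chi_A VTV^*\chi_B=\chi_A V\chi_{A'}T\chi_{B'}V^*\chi_B,\qquad A':=\pi^{-1}(\N_C(A)),\quad B':=\pi^{-1}(\N_C(B)).
\]
Since $\pi$ is coarse, $d(\pi(a),\pi(b))\le\rho_+(d_X(a,b))$. So if $d_X(A',B')\le R$, then $d_Y(\N_C(A),\N_C(B))\le\rho_+(R)$ and $d_Y(A,B)\le\rho_+(R)+2C$. Choosing $R'>\rho_+(R)+2C$ therefore forces $d_X(A',B')>R$, which gives $\|\chi_A\Ad_V(T)\chi_B\|\le\|\chi_{A'}T\chi_{B'}\|<\varepsilon$.

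For (2), I need $\|(VTV^*)_{yy'}\|\to0$ as $(y,y')\to\infty$. Matrix coefficients away from the diagonal blocks are handled by quasi-locality from (1) together with $d(Y_n,Y_m)\to\infty$. More precisely, ghostness is equivalent to $\|\chi_{\{y\}}S\chi_{\{y'\}}\|$ being small outside a finite set. For $y\in Y_n$ and $y'\in Y_m$ with $n\ne m$ and $n+m$ large, quasi-locality of $VTV^*$ makes this small. So it suffices to treat $y,y'\in Y_n$ with $n$ large.

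Write $(VTV^*)_{yy'}=\chi_{\{y\}}V\chi_{F}T\chi_{F'}V^*\chi_{\{y'\}}$, where $F:=\pi^{-1}(B(y,C))$ and $F':=\pi^{-1}(B(y',C))$. The key step is to decompose $F$ into few well-separated pieces. $F$ is a union of at most $N_Y(C)$ fibres $\pi_n^{-1}(z)$. Each fibre has points pairwise at distance $\ge s_n$, so these are $s_n$-separated sets. The cardinality of each fibre is not bounded, which is the main obstacle, and here Lemma~\ref{lem:cutoff of quasi-local operators} is what I would use. Fix $\delta>0$ and the corresponding $R$. For $n$ large with $s_n>R$, apply the lemma to the $R$-separated finite set $\pi_n^{-1}(z)\cup\pi_n^{-1}(z')$, after splitting it into boundedly many $R$-separated subsets. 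Concretely, the $2N_Y(C)^2$ pairs of fibres $(z,z')$ each give
\[
\|\chi_{\pi^{-1}(z)}T\chi_{\pi^{-1}(z')}\|\le\|\chi_{G}T\chi_G-\textstyle\sum_{x\in G}\chi_{\{x\}}T\chi_{\{x\}}\|+\max_{x\in G}\|T_{xx}\|,
\]
where $G=\pi^{-1}(z)\cup\pi^{-1}(z')$ is a union of two $s_n$-separated sets. To make $G$ itself $R$-separated, I would instead split into the pieces $\pi^{-1}(z)$ and $\pi^{-1}(z')$ and use a Cauchy--Schwarz style estimate on the block off-diagonal parts.

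The intended bound is $\|\chi_{\pi^{-1}(z)}T\chi_{\pi^{-1}(z')}\|\lesssim 2\delta+\sup_{x\in X_n}\|T_{xx}\|$ plus a term from $\|T_{xx'}\|$ with $x\in\pi^{-1}(z)$, $x'\in\pi^{-1}(z')$ at bounded distance. The latter pairs are controlled by the ghost property of $T$ and bounded geometry, since the pairs at distance $\le R$ form a partial matching of bounded multiplicity. Summing the boundedly many terms and letting $n\to\infty$, then $\delta\to0$, yields $\|(VTV^*)_{yy'}\|\to0$. Combined with (1), this shows $\Ad_V(T)\in G_q(Y;\H_0)$.
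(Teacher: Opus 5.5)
Your proof of (1) is correct, and it is a direct verification of what the paper simply quotes from \cite[Proposition 2.9]{BCZ23}. In (2) your reductions are also fine: cross-block entries are handled by quasi-locality of $S=\Ad_V(T)$, and then $\|S_{yy'}\|\le\sum_{z\in B_Y(y,C),\,z'\in B_Y(y',C)}\|\chi_{P_z}T\chi_{P_{z'}}\|$ with $P_z:=\pi^{-1}(z)$. However, the one estimate that carries the argument is asserted rather than proved. That estimate is that $\|\chi_{P_z}T\chi_{P_{z'}}\|$ is small uniformly in $z\neq z'\in Y_n$ for large $n$.

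There are three problems with how you try to get it. First, Lemma~\ref{lem:cutoff of quasi-local operators} does not apply to $P_z\cup P_{z'}$, because points of different fibres can be close; so the first term in your displayed inequality is uncontrolled. Second, applying the lemma to $P_z$ and $P_{z'}$ separately only controls the diagonal blocks $\chi_{P_z}T\chi_{P_z}$ and $\chi_{P_{z'}}T\chi_{P_{z'}}$. The ``Cauchy--Schwarz'' step from these to the off-diagonal block $\chi_{P_z}T\chi_{P_{z'}}$ is valid only for $T\ge0$, which you never arrange; for general $T$ the diagonal blocks can vanish while the off-diagonal one does not. Third, the ``partial matching'' heuristic is not yet an argument. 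Discarding the entries of $\chi_{P_z}T\chi_{P_{z'}}$ at distance $>R$ at cost $2\delta$ requires a version of the lemma with signs constant on the $R$-connected components of $P_z\cup P_{z'}$. These components are single points or matched pairs only when $s_n>2R$. With merely $s_n>R$ they can be long chains alternating between the two fibres, and then the retained part is not controlled by ghostness and bounded multiplicity.

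Any of the following closes the gap.
\begin{enumerate}
\item Since $G_q(X;\H_0)$ is a $C^*$-algebra, every element is a combination of four positive ones, so you may assume $T\ge0$. Then $\|\chi_{P_z}T\chi_{P_{z'}}\|\le\|\chi_{P_z}T\chi_{P_z}\|^{1/2}\|\chi_{P_{z'}}T\chi_{P_{z'}}\|^{1/2}\le2\delta+\sup_{x\in X_n}\|T_{xx}\|$, applying the lemma to each single fibre once $s_n>R$.
\item Prove the component version of the lemma; the same sign-averaging proof works for any partition into mutually $R$-far pieces. Take $n$ with $s_n>2R$, which gives $\|\chi_{P_z}T\chi_{P_{z'}}\|\le2\delta+\sup_{x,x'\in X_n}\|T_{xx'}\|$.
\item The paper's route avoids two-fibre blocks altogether. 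It uses the one-sided bound $\|S_{yy'}\|\le\|\chi_{\{y\}}VT\|\le N_Y(C)\sup_{w\in B_Y(y,C)}\|\chi_{\pi^{-1}(w)}T\|$ together with $\|\chi_{\pi^{-1}(w)}T\|^2=\|\chi_{\pi^{-1}(w)}TT^*\chi_{\pi^{-1}(w)}\|$. It then applies the lemma to the quasi-local ghost $TT^*$ on the single fibre $\pi^{-1}(w)$. This makes the entire row $\chi_{\{y\}}S$ small for $y$ in late blocks, and the same argument for $T^*$ handles columns. So it also makes your separate treatment of cross-block entries via (1) unnecessary.
\end{enumerate}
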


\begin{proof}
(1) comes directly from \cite[Proposition 2.9]{BCZ23}, so we focus on (2). Let $T$ be a ghost in $C^*_q(X;\H_0)$, and set $S:=\Ad_V(T) = VTV^*$.

Take $C\geq 0$ such that $\supp(V) \subseteq \{(y,x)\in Y\times X:d_Y(y,\pi(x))\leq C\}$.
For \(y\in Y\), put $E_y:=\pi^{-1}(B_Y(y,C))$. Then for each $y\in Y$, we have
\begin{equation}\label{EQ:chi y V}
\chi_{\{y\}}V=\chi_{\{y\}}V\chi_{E_y}.
\end{equation}
Given $\varepsilon>0$, choose $R>0$ such that for any $A,B \subseteq X$ with $d(A,B) > R$, we have $\|\chi_A TT^* \chi_B\| < \frac{\varepsilon^2}{4N_Y(C)^2}$. Since $G_q(X;\H_0)$ is an ideal, $TT^*$ is a ghost. Hence there is a finite subset $K \subseteq X$ such that
\[
\|(TT^*)_{xx'}\| \leq \frac{\varepsilon^2}{2N_Y(C)^2} \quad  \text{for any} \quad (x,x') \notin K \times K.
\]
Choose $N_0 \in \NN$ such that for any $n > N_0$, we have $X_n \cap K = \emptyset$, $s_n > R$ and for any $m\neq n$, $d_X(X_m, X_n) > R$ and $d_Y(Y_m, Y_n) > C$.

Claim: For any $n > N_0$, any $y\in Y_n$ and $z\in Y$, we have $\|S_{yz}\| \leq \varepsilon$. To see this, fix $y\in Y_n$ and $z\in Y$. From \eqref{EQ:chi y V}, we have
\[
\|\chi_{\{y\}}VT\| = \|\chi_{\{y\}}V\chi_{E_y}T\| \leq \sum_{w\in B_Y(y,C)} \|\chi_{\{y\}}V\chi_{\pi^{-1}(w)}T\| \leq N_Y(C) \cdot \sup_{w\in B_Y(y,C)}\|\chi_{\pi^{-1}(w)}T\|.
\]
Since $d_Y(Y_m, Y_n) > C$ for any $m \neq n$, we obtain $E_y \subseteq X_n$. Hence for each $w\in B_Y(y,C)$, different points in $\pi^{-1}(w)$ have distance at least $s_n$. By Lemma~\ref{lem:cutoff of quasi-local operators}, we have
\begin{align*}
  \|\chi_{\pi^{-1}(w)} TT^* \chi_{\pi^{-1}(w)}\| &\leq \left\|\chi_{\pi^{-1}(w)} TT^* \chi_{\pi^{-1}(w)}- \sum_{x\in \pi^{-1}(w)} \chi_{\{x\}}TT^*\chi_{\{x\}} \right\| + \sup_{x\in \pi^{-1}(w)} \|(TT^*)_{xx}\| \\
  &\leq 2 \cdot \frac{\varepsilon^2}{4N_Y(C)^2} + \frac{\varepsilon^2}{2N_Y(C)^2} = \frac{\varepsilon^2}{N_Y(C)^2}.
\end{align*}
Therefore, we obtain
\[
\|S_{yz}\| = \|\chi_{\{y\}} VTV^* \chi_{\{z\}}\| \leq \|\chi_{\{y\}}VT\| \leq N_Y(C) \cdot \frac{\varepsilon}{N_Y(C)} = \varepsilon.
\]
This proves the claim. Applying the same argument to $T^*$, we conclude that $S$ is ghost. This finishes the proof.
\end{proof}

%The case we are interested in comes from box spaces of groups as follows:
%
%\begin{lem}\label{lem:box spaces fibration}
%Let $\Gamma$ be a finitely generated group, equipped with a left-invariant word length metric. Let $\{N_n\}_n, \{H_n\}_n$ be two  decreasing sequences of finite-index normal subgroups in $\Gamma$ such that $\bigcap_n N_n = \{1\}$ and $H_n \subseteq N_n$ for each $n$. Equip $\Gamma/N_n$ and $\Gamma/H_n$ with the quotient metrics. Let $X$ be the coarse disjoint union of $\{\Gamma/H_n\}_n$, and $Y$ the coarse disjoint union of $\{\Gamma/N_n\}_n$. Denote $\pi_n: \Gamma/H_n \to \Gamma/N_n$ the canonical quotient map, and set $\pi:=\bigsqcup_n \pi_n: X \to Y$. Then $\pi$ is coarse and $s_n:=\inf\{d_X(x,x'):x\neq x' \text{ in } \Gamma/H_n \text{ with } \pi_n(x)=\pi_n(x')\}$ goes to infinity as $n\to \infty$.
%\end{lem}
%
%\begin{proof}
%Since each $\pi_n$ is the canonical quotient map, it is contractive. By the definition of coarse disjoint unions, it is clear that $\pi$ is coarse.
%
%Fixing $R>0$, \cite[Proposition 4.4.3]{NY12} provides $N\in \NN$ such that for any $n> N$, the canonical quotient map $q_n: \Gamma \to \Gamma/N_n$ is isometric on $B_\Gamma(1,R)$. Since $q_n = \pi_n \circ p_n$ for the canonical quotient map $p_n: \Gamma \to \Gamma/H_n$ and $\pi_n$ and $p_n$ are contractive, it follows that $\pi_n$ is isometric on $B_{\Gamma/H_n}(1,R)$. By left invariance of the quotient metrics, this implies $s_n > R$. Since \(R>0\) is arbitrary, we obtain \(s_n\to\infty\).
%\end{proof}

Now we are ready to prove Proposition~\ref{prop:ghost proj are compact converse}.

\begin{proof}[Proof of Proposition~\ref{prop:ghost proj are compact converse}]
Take $X=\bigsqcup_i \mathbb{F}_3/(N_{n_i} \cap \Theta(N_{k_i}))$, which does not coarsely embed into any Hilbert space by \cite[Theorem 4.5]{DK18}. On the other hand, \cite[Theorem 3.9]{DK18} (cf. \cite[Theorem 4]{Khu14}) shows that the coarse disjoint union $Y:=\bigsqcup_i \mathbb{F}_3/\Theta(N_{k_i})$ can be coarsely embedded into Hilbert space. Denote by $\pi: X \to Y$ the map induced by the canonical quotients $\pi_i: \mathbb{F}_3/(N_{n_i} \cap \Theta(N_{k_i})) \to \mathbb{F}_3/\Theta(N_{k_i})$, which is coarse by the definition of coarse disjoint unions.

Fixing $R>0$, \cite[Proposition 4.4.3]{NY12} provides $i_0\in \NN$ such that for any $i> i_0$, the canonical quotient map $q_i: \mathbb{F}_3 \to \mathbb{F}_3/\Theta(N_{k_i})$ is isometric on $B_{\mathbb{F}_3}(1,R)$. Since $q_i = \pi_i \circ p_i$ for the canonical quotient map $p_i: \mathbb{F}_3 \to \mathbb{F}_3/(N_{n_i} \cap \Theta(N_{k_i}))$ and $\pi_i$ and $p_i$ are contractive, it follows that $\pi_i$ is isometric on $B_{\mathbb{F}_3/(N_{n_i} \cap \Theta(N_{k_i}))}(1,R)$. By the left invariance of the quotient metrics, this implies $s_i > R$ for
\[
s_i:=\inf \left\{d_X(x,x'):x\neq x' \text{ in } \mathbb{F}_3/(N_{n_i} \cap \Theta(N_{k_i})) \text{ with } \pi_i(x)=\pi_i(x') \right\}.
\]
Since \(R>0\) is arbitrary, we obtain \(s_i\to\infty\).

%Lemma~\ref{lem:box spaces fibration} can be applied to $\pi$ and hence, $\pi$ is coarse and the corresponding fibre separation constants \(s_i\) tend to infinity.

Now fix an infinite-dimensional separable Hilbert space $\H_0$, and a ghost projection $P$ in $C^*_q(X;\H_0)$. Choose an isometry $V:\ell^2(X;\H_0) \to \ell^2(Y;\H_0)$ covering $\pi$, whose existence can be guaranteed by \cite[Proposition 4.3.4]{WY20}. According to Lemma~\ref{lem:covering maps ghosts to ghosts}, $\Ad_V(P)$ is a ghost quasi-local projection in $\B(\ell^2(Y;\H_0))$. Theorem~\ref{prop:ghost proj are compact} shows that $\Ad_V(P)$ is compact, so $P=V^* \Ad_V(P) V$ is compact.

Finally, we consider finite-dimensional $\H_0$. Embedding it isometrically into an infinite-dimensional separable Hilbert space \(\widetilde \H_0\), a ghost projection \(T\) in $\B(\ell^2(X;\H_0))$ gives a ghost projection \(T\oplus0\) in \(\B(\ell^2(X;\widetilde \H_0))\), and compactness of \(T\oplus0\) implies compactness of \(T\). This finishes the proof.
\end{proof}

\section{Characterisation II: Schur multipliers and ghost operators}\label{sec:schur-ghost}

This section is devoted to Theorem~\ref{thm:char by multiplier} and its applications.

For a metric space \((X,d)\), recall that $\Pz(X)$ is the set of all Schur multiplier kernels which are normalised positive-definite and satisfy
\[
 \lim_{R\to\infty}\sup_{d(x,y)>R}|\varphi(x,y)|=0.
\]
For $\varphi \in \Pz(X)$, its Schur multiplier $M_{\varphi}$ is a unital completely positive contraction.
Also recall that
\[
J_0(X)=\{T \in C^*_u(X):M_\varphi(T^*T)\in\KK(\ell^2(X)) \text{ for every } \varphi\in\Pz(X)\}.
\]

A subset $K \subseteq X \times X$ is called a \emph{partial translation} if there exists $S>0$ such that $K \subseteq E_S$ and for each $x\in X$, there is at most one $y\in X$ with $(x,y) \in K$ and at most one $z\in X$ with $(z,x) \in K$. In this case, there is a bijection $t: D \to R$ with $D,R \subseteq X$ such that $K$ coincides with $\operatorname{graph}(t)=\{(t(y),y):y\in D\}$.
%\begin{thm}\label{thm:char by multiplier}
%Let $(X,d)$ be a discrete metric space of bounded geometry. The following are equivalent:
%\begin{enumerate}
%\item The space $X$ can be coarsely embedded into some Hilbert space;
%\item There exists a sequence $\{\varphi_n\}_{n\in\mathbb N}\subseteq \Pz(X)$ such that $\|M_{\varphi_n}(T)-T\| \to 0$ as $n\to \infty$, for any $T \in C^*_{uq}(X)$;
%\item Any positive $T\in C^*_u(X)$ with $M_\varphi(T)\in \KK(\ell^2(X))$ for any $\varphi\in\Pz(X)$, is compact;
%\item Any positive $T\in C^*_{uq}(X)$ with $M_\varphi(T)\in \KK(\ell^2(X))$ for any $\varphi\in\Pz(X)$, is compact;
%\item $J_0(X)=\KK(\ell^2(X))$.
%\end{enumerate}
%\end{thm}

The main assertion of Theorem~\ref{thm:char by multiplier} is that compactness of operators in $J_0(X)$ characterises coarse embeddability into Hilbert space. We package the proof into the following subsection.

\subsection{Proof of the characterisation}

We start with the following algebraic assertion concerning $J_0(X)$.

\begin{lem}\label{lem:SC implies ghost}
Let $(X,d)$ be a discrete metric space of bounded geometry. Then $J_0(X)$ is a closed two-sided ideal of $C^*_u(X)$ and $\KK(\ell^2(X)) \subseteq J_0(X) \subseteq G_u(X)$. Moreover, for a positive operator $T\in C^*_u(X)$, $T \in J_0(X)$ \emph{if and only if} $M_\varphi(T)\in \KK(\ell^2(X))$ for any $\varphi\in\Pz(X)$.
\end{lem}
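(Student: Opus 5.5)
We begin with the last assertion, since it drives everything else. Let $T\in C^*_u(X)$ be positive. If $T\in J_0(X)$, then $M_\varphi(T^2)$ is compact for every $\varphi\in\Pz(X)$. We want to upgrade this to compactness of $M_\varphi(T)$. We plan to use a Kadison--Schwarz type inequality for the unital completely positive map $M_\varphi$: $M_\varphi(T)^2\le M_\varphi(T^2)$. This shows that $M_\varphi(T)^2$ lies below a compact positive operator. A positive operator dominated by a compact positive one is compact: if $0\le A\le K$, then $A^{1/2}=UK^{1/2}$ for a contraction $U$. Hence $M_\varphi(T)$ is compact. Note that $M_\varphi(T)$ is self-adjoint, since $\varphi$ is hermitian.

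For the converse, suppose $M_\varphi(T)$ is compact for all $\varphi\in\Pz(X)$. We need $M_\varphi(T^2)$ compact. Here we would use that $\Pz(X)$ is closed under pointwise products, because Schur products of positive-definite kernels are positive definite and decay is preserved. Write $\varphi=\psi\cdot\psi$ with $\psi=\varphi^{1/2}$; this only works when $\varphi\ge0$, so in general we avoid square roots. Instead we use the Stinespring form $M_\varphi(A)=V^*(A\otimes 1)V$ with $V\delta_x=\delta_x\otimes\xi_x$ and $\varphi(x,y)=\langle\xi_y,\xi_x\rangle$. Then
\[
M_\varphi(T^2)=V^*(T\otimes1)(T\otimes1)V,
\]
so it suffices that $(T\otimes 1)V$ is compact. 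Since $V^*(T\otimes1)V=M_\varphi(T)$, we have $\|(T^{1/2}\otimes1)V\zeta\|^2=\langle M_\varphi(T)\zeta,\zeta\rangle$. Compactness of $M_\varphi(T)$ therefore gives compactness of $(T^{1/2}\otimes1)V$, and multiplying by the bounded operator $T^{1/2}\otimes1$ gives compactness of $(T\otimes1)V$. This is the step I expect to need the most care: the Stinespring dilation must be set up cleanly for a possibly non-separable vector family, but this is routine.

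Next, $\KK\subseteq J_0$ holds because Schur multipliers preserve compact operators. For $J_0\subseteq G_u$, take $T\in J_0(X)$. Fix $x_0$, and pick $\varphi\in\Pz(X)$; any one works, for example $e^{-t\|F(x)-F(y)\|}$ requires an embedding, so instead use $\varphi=\chi_{\{x=y\}}$, the identity kernel, which is positive definite, normalised, and vanishes off the diagonal. Then $M_\varphi(T^*T)=\mathrm{diag}((T^*T)_{xx})$ is compact, so $\|T\delta_x\|^2\to0$ as $x\to\infty$. Consequently $|T_{yx}|\le\|T\delta_x\|$ and, symmetrically via $TT^*$, $|T_{yx}|\le\|T^*\delta_y\|$. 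For the latter we need $T^*\in J_0$, or more directly the fact below that $J_0$ is an ideal, together with $\|T^*\delta_y\|^2=(TT^*)_{yy}$. The conclusion is that $T$ is a ghost.

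Finally, the ideal property. Closedness follows from continuity of $T\mapsto M_\varphi(T^*T)$ and the closedness of $\KK$. For a right multiple $TS$ with $S\in C^*_u(X)$, we have $(TS)^*(TS)=S^*T^*TS\le\|T\|^2S^*S$, which is not directly useful. Instead we use the positive characterisation: $J_0=\{T: T^*T\in I\}$, where $I$ denotes the set of positive $P$ with $M_\varphi(P)$ compact for all $\varphi$. The set $I$ is a hereditary cone, since a positive operator dominated by one with compact image again has compact image, by the argument above. Hence $J_0$ is a closed left ideal, as the left kernel of a hereditary cone. For left multiples, $(ST)^*(ST)\le\|S\|^2T^*T$, so heredity gives $ST\in J_0$. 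For right multiples, we show $T^*\in J_0$ by checking that $TT^*\in I$. To do this, use $M_\varphi(TT^*)=W^*W$ where $W=(T^*\otimes1)V$; compactness of $(T\otimes1)V$ is equivalent to compactness of $(T^*T\otimes 1)^{1/2}V$. Relating $T^*$ to $T$ uses the polar decomposition $T^*=|T|u^*$, combined with the fact that Schur multipliers commute with diagonal operators. This relation is the main obstacle, and I would try to handle it through the compact-dilation formulation above.
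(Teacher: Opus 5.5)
Most of your proposal is correct. The Kadison--Schwarz direction of the positive characterisation is exactly the paper's argument. Your dilation argument for the converse is valid; it is a longer way of saying $M_\varphi(T^2)\le\|T\|\,M_\varphi(T)$, which the paper gets from $T^2\le\|T\|T$. The inclusion $\KK(\ell^2(X))\subseteq J_0(X)$ and your hereditary-cone argument do show that $J_0(X)$ is a closed left ideal of $C^*_u(X)$.

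\textbf{The gap.} What is missing is the right-ideal property, equivalently $T\in J_0(X)\Rightarrow T^*\in J_0(X)$. You leave this open, and the route you sketch cannot close it. In $T^*=|T|u^*$ the partial isometry $u$ is not diagonal (nor, in general, in $C^*_u(X)$), so $M_\varphi$ does not intertwine with it. More fundamentally, any argument that keeps $\varphi$, and hence the dilation $V$, fixed must fail, because the single-kernel condition $M_\varphi(T^*T)\in\KK(\ell^2(X))$ is not invariant under $T\mapsto T^*$. For example:
\begin{itemize}
\item Let $Y$ be coarsely embeddable via $F$ but without Property A, and let $A\in C^*_u(Y)$ be a non-compact positive ghost.
\item Put $X=Y\times\{0,1\}$, let $W$ be the translation $(y,0)\mapsto(y,1)$, and set $T=A^{1/2}W$.
\item Let $\varphi$ be $\chi_\Delta$ on the $0$-copy, $e^{-t\|F(x)-F(y)\|}$ on the $1$-copy with $t$ small, and $0$ across the copies.
\end{itemize}
Then $M_\varphi(T^*T)$ is the diagonal of $A$, hence compact. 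But $M_\varphi(TT^*)=M_{\varphi_t}(A)$ is norm-close to $A$ by Corollary~\ref{cor:ql-approx}, so it is not compact.

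\textbf{The missing idea.} You have to let the kernel move.
\begin{itemize}
\item For diagonal $S$, one has $M_\varphi(S^*T^*TS)=S^*M_\varphi(T^*T)S$.
\item For a partial translation $S=\chi_{\gr(t)}$, with $t\colon D\to D'$ a bijection displacing points by at most $s_0$, define $\varphi'(x,y)=\varphi(t^{-1}(x),t^{-1}(y))$ for $x,y\in D'$ and $\varphi'(x,y)=\delta_{x=y}$ otherwise.
\item Then $\varphi'\in\Pz(X)$: it is a direct sum of normalised positive-definite kernels, and $d(t^{-1}(x),t^{-1}(y))\ge d(x,y)-2s_0$ preserves the decay.
\item Moreover $M_\varphi(S^*T^*TS)=S^*M_{\varphi'}(T^*T)S$. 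This is compact precisely because membership in $J_0(X)$ is tested against all of $\Pz(X)$, in particular against $\varphi'$.
\end{itemize}
Every finite-propagation operator is a finite sum $\sum_j f_jV_{t_j}$, and $J_0(X)$ is a closed subspace, so $TS\in J_0(X)$ for all $S\in C^*_u(X)$.

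\textbf{A related dependency.} Your proof of $J_0(X)\subseteq G_u(X)$ also relies on $T^*\in J_0(X)$, but you can make it independent of that. Given $\varepsilon>0$, choose $R$ with $|T_{yx}|<\varepsilon$ whenever $d(x,y)>R$, which is possible since $T\in C^*_u(X)$. Choose a finite $F$ with $\|T\delta_x\|<\varepsilon$ for $x\notin F$. Then $|T_{yx}|<\varepsilon$ for every $(y,x)\notin\N_R(F)\times\N_R(F)$.
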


\begin{proof}
Since the Schur multiplier $M_\varphi$ is linear, positive and contractive, it is routine to check that $J_0(X)$ is a norm-closed linear subspace of $C^*_u(X)$. For example, for any $T_1,T_2\in J_0(X)$ and $\varphi\in\Pz(X)$, positivity of $M_\varphi$ gives
\[
 0\le M_\varphi((T_1+T_2)^*(T_1+T_2)) \le2 M_\varphi(T_1^*T_1)+ 2 M_\varphi(T_2^*T_2)\in\KK(\ell^2(X)).
\]
Since $\KK(\ell^2(X))$ is hereditary, we obtain $T_1+T_2 \in J_0(X)$.

Now we check that $J_0(X)$ is an ideal in $C^*_u(X)$. For $T \in J_0(X)$ and $S \in C^*_u(X)$, we have $0\le M_\varphi ((ST)^*(ST))\le\|S\|^2 M_\varphi(T^*T)$, which proves that $ST \in J_0(X)$. To see that $TS \in J_0(X)$, first assume that $S$ is a diagonal operator. Then
\[
M_\varphi((TS)^*(TS))=M_\varphi(S^*T^*TS)=S^*M_\varphi(T^*T)S\in\KK(\ell^2(X)).
\]
Next, we assume that $S=\chi_K$ for some partial translation $K$ and let $t: D \to R$ be the bijection, where $D,R \subseteq X$, such that $S$ is the graph of $t$. Define a kernel $\varphi'$ on $X \times X$ by
\[
 \varphi'(x,y)=
 \begin{cases}
 \varphi(t^{-1}(x), t^{-1}(y)), & \text{if } x,y\in R,\\
 \delta_{x=y}, & \text{otherwise}.
 \end{cases}
\]
Here $\delta_{x=y}$ equals $1$ if $x=y$, and $0$ if $x\neq y$. It is routine to check $\varphi' \in \Pz(X)$ and
\[
M_{\varphi}((TS)^*(TS)) =S^*M_{\varphi'}(T^*T)S \in \KK(\ell^2(X)).
\]
Any finite propagation operator has the form $S=\sum_{j=1}^n f_j V_{t_j}$ for $f_j \in \ell^\infty(X)$ and bijections $t_j$ as above. Taking the norm closure, we obtain $TS \in J_0(X)$ for any $S \in C^*_u(X)$.

Since $M_\varphi$ preserves $\KK(\ell^2(X))$ for each $\varphi \in \Pz(X)$, we have $\KK(\ell^2(X)) \subseteq J_0(X)$. Take $\Delta:=\{(x,x): x\in X\}$ and $\varphi = \chi_\Delta$, which is in $\Pz(X)$. Given $T \in J_0(X)$, we have $M_\varphi(T^*T)_{xx} = \|T\delta_x\|^2$ and $M_\varphi(TT^*)_{xx} = \|T^*\delta_x\|^2$. Compactness shows that $\|T\delta_x\| \to 0$ and $\|T^*\delta_x\| \to 0$ as $x\to \infty$, where the second follows from $T^* \in J_0(X)$. Since $|A_{xy}|\leq \min\left\{\|A\delta_y\|,\|A^*\delta_x\|\right\}$ for any $x,y\in X$, we obtain that $T$ is a ghost.

Finally, fix a positive $T \in C^*_u(X)$. Assume $M_\varphi(T)\in \KK(\ell^2(X))$ for any $\varphi\in\Pz(X)$. Since $T^2\le\|T\|T$, we obtain that $M_\varphi(T^2) \leq \|T\|M_\varphi(T) \in \KK(\ell^2(X))$. Hence $T \in J_0(X)$. Conversely, assume that $T \in J_0(X)$. Since $M_\varphi$ is a completely positive contraction, $0\leq M_\varphi(T)^2\leq M_\varphi(T^2)$, which shows that $M_\varphi(T)$ is compact.
\end{proof}

We now combine the matrix obstruction from Section~\ref{sec:obstructions} with the Schur approximation from Section~\ref{sec:cuts} to prove our characterisation.

\begin{proof}[Proof of Theorem~\ref{thm:char by multiplier}]
``(1) $\Rightarrow$ (2)'': Assume that $F:X\to \H$ is a coarse embedding. For $t>0$, put $\varphi_t(x,y):=\exp({-t\|F(x)-F(y)\|})$ for $x,y\in X$. Clearly, $\varphi_t\in\Pz(X)$. Moreover, Corollary~\ref{cor:ql-approx} implies that $\|M_{\varphi_t}(T)-T\|\to 0$ as $t\to 0$.

``(2) $\Rightarrow$ (4)'' and ``(4) $\Rightarrow$ (3)'' hold trivially.

``(3) $\Leftrightarrow$ (5)'': By Lemma~\ref{lem:SC implies ghost}, condition (3) is equivalent to the assertion
that all positive elements in $J_0(X)$ are compact. Since $\KK(\ell^2(X)) \subseteq J_0(X)$ and every element in a $C^*$-algebra is a linear combination of positive elements, we obtain (5).

``(3) $\Rightarrow$ (1)'':
If (1) fails, then there exist $Y=\bigsqcup_nY_n$, $S_n,M_n,C,r_n$ given by Proposition~\ref{prop:matrix-obstruction}. Regard all block-diagonal operators on $\ell^2(Y)$ as operators on $\ell^2(X)$ by zero extension. Set
\[
 S=\mathrm{(SOT)}\text{-}\sum_{n} S_n
 \quad \text{and} \quad
 M=\mathrm{(SOT)}\text{-}\sum_{n} M_n.
\]
Also set
\[
 T_n:=(S_n)_+ \quad \text{and} \quad
 T:=S_+=\mathrm{(SOT)}\text{-}\sum_{n} T_n \in C^*_u(X).
\]
Since $M_n$ is self-adjoint and $\|M_n\|\leq 1$, we have $S_n \leq M_n\leq  \Id_{\ell^2(Y_n)}$, which gives $0 \leq T \leq \Id_{\ell^2(X)}$. Moreover, $\|T_n\|=\lambda_{\max}(S_n)\ge C^{-1}$, so $T$ is not compact.

It remains to show that $M_\varphi(T)$ is compact for any \(\varphi\in\mathcal P_0(X)\). Fix \(\varphi\in\mathcal P_0(X)\), and put \(V:=T^{1/2}\in C_u^*(X)\). Then $0\le T^2=T^{1/2}ST^{1/2}\le V^*MV$.
Choose block-diagonal finite-propagation operators \(V^{(m)}\) such that $\|V^{(m)}-V\|\to 0$.
Fix \(m\), and let \(R:=\ppg(V^{(m)})\). Since \(M_n\) is supported on \(\{(x,y) \in Y_n^2: d(x,y) \geq r_n\}\), $A_n:=(V_n^{(m)})^*M_nV_n^{(m)}$ is supported on \(\{(x,y) \in Y_n^2: d(x,y) \geq r_n - 2R\}\).
Set
\[
\varepsilon_n:= \sup_{d(x,y)\geq r_n-2R} |\varphi(x,y)|, \quad \widetilde V_n=\left(|(V_n^{(m)})_{xy}|\right)_{x,y\in Y_n} \quad \text{and} \quad
K_n:=\widetilde V_n^*M_n\widetilde V_n.
\]
Since \(M_n\) is entrywise non-negative, we have $|(A_n)_{xy}|\le (K_n)_{xy}$ for each $x,y\in Y_n$.
Hence, for any unit vectors \(\xi,\eta \in \ell^2(Y_n)\), we have
\[
|\langle\eta,M_\varphi(A_n)\xi\rangle| \leq \varepsilon_n \langle|\eta|,K_n|\xi|\rangle \leq \varepsilon_n\|K_n\|,
\]
for $|\eta|(x)=|\eta(x)|$ and $|\xi|(x) = |\xi(x)|$. By the Schur test, we have $\|\widetilde V_n\| \leq N_X(R) \cdot \|V_n^{(m)}\|$ and hence, $\|K_n\| \leq N_X(R)^2 \cdot  \|V_n^{(m)}\|^2 \leq N_X(R)^2 \cdot  \|V^{(m)}\|^2$. Therefore,
\[
\|M_\varphi(A_n)\|
\le
\varepsilon_n\|K_n\|
\le
\varepsilon_n \cdot N_X(R)^2 \cdot \|V^{(m)}\|^2 \to 0 \quad \text{as} \quad n \to \infty,
\]
which shows that $M_\varphi\bigl((V^{(m)})^*MV^{(m)}\bigr)$ is compact. Since $M_\varphi$ is contractive, we obtain $M_\varphi(V^*MV) = \lim_{m \to \infty} M_\varphi\bigl((V^{(m)})^*MV^{(m)}\bigr)$, which is also compact. Finally, note that
\[
0\le M_\varphi(T)^2 \le M_\varphi(T^2)\le M_\varphi(V^*MV).
\]
Thus, \(M_\varphi(T)\) is compact. This finishes the proof.
\end{proof}

\subsection{Application: relative Haagerup property}

Let $(X,d)$ be a discrete metric space of bounded geometry. Denote by $E: C^*_{uq}(X) \to \ell^\infty(X)$ the canonical conditional expectation obtained by restricting operators to the diagonal. Equip $C^*_{uq}(X)$ with the $\ell^\infty(X)$-valued inner product
\[
  \langle S, T \rangle_{\ell^\infty(X)}:=E(S^*T) \quad \text{for} \quad S,T \in C^*_{uq}(X).
\]
The corresponding norm is
\begin{equation}\label{eq:Enorm}
  \|T\|_E
  =\|E(T^*T)\|^{1/2}
  =\sup_{y\in X}\Big(\sum_{x\in X}|T_{xy}|^2\Big)^{1/2}
  =\sup_{y\in X}\|T\delta_y\|.
\end{equation}
Clearly, $\|T\|_E \leq \|T\|$. Denote by $\mathcal H_E$ the Hilbert $\ell^\infty(X)$-module completion of $C^*_{uq}(X)$.

\begin{lem}\label{lem:same-module}
The Hilbert module $\mathcal H_E$ is also the completion of $C^*_u(X)$ under $\|\cdot\|_E$.
\end{lem}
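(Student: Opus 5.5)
Since $C^*_u(X)\subseteq C^*_{uq}(X)$ and the $\|\cdot\|_E$-completion of $C^*_{uq}(X)$ is $\mathcal H_E$, it suffices to show that $C^*_u(X)$ is $\|\cdot\|_E$-dense in $C^*_{uq}(X)$. The closure of $C^*_u(X)$ inside $\mathcal H_E$ is then all of $\mathcal H_E$, and the completion of $C^*_u(X)$ under $\|\cdot\|_E$ is canonically identified with $\mathcal H_E$. So I will fix $T\in C^*_{uq}(X)$ and $\varepsilon>0$, and look for a finite propagation operator $T'$ with $\|T-T'\|_E<\varepsilon$.

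The natural candidate is the truncation $T^{(R)}:=M_{\chi_{E_R}}(T)$, i.e.\ the restriction of the matrix of $T$ to the entourage $E_R$. By bounded geometry, each row and column of this matrix has at most $N_X(R)$ nonzero entries, each bounded by $\|T\|$. The Schur test therefore gives $\|T^{(R)}\|\le N_X(R)\|T\|$, so $T^{(R)}\in\B(\ell^2(X))$ has finite propagation and lies in $C^*_u(X)$. By \eqref{eq:Enorm},
\[
\|T-T^{(R)}\|_E=\sup_{y\in X}\|\chi_{X\setminus B(y,R)}T\delta_y\|=\sup_{y\in X}\|\chi_{X\setminus B(y,R)}T\chi_{\{y\}}\|.
\]
Now take $A=X\setminus B(y,R)$ and $B=\{y\}$. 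Then $d(A,B)>R$, so quasi-locality of $T$ gives $\|\chi_AT\chi_B\|<\varepsilon$ once $R$ is chosen according to $\varepsilon$, uniformly in $y$. Hence $\|T-T^{(R)}\|_E\le\varepsilon$, which proves the required density.

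The key point, and the only mild obstacle, is that quasi-locality yields a bound that is uniform in $y$. This uniformity is exactly what the sup in \eqref{eq:Enorm} demands, and it is also why the operator norm is not the right norm here: in operator norm, $C^*_u(X)$ need not be dense in $C^*_{uq}(X)$.

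A second point to check is that the inclusion $C^*_u(X)\hookrightarrow C^*_{uq}(X)$ is compatible with the $\ell^\infty(X)$-valued inner products. This holds because $E$ is the same diagonal restriction on both algebras. The induced map between completions is therefore an isometric module map with dense range, hence a unitary identification, and the lemma follows.
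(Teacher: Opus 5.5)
Your argument is correct and is essentially the paper's proof: you truncate $T\in C^*_{uq}(X)$ to the entourage $E_R$, compute $\|T-T^{(R)}\|_E$ column-wise via \eqref{eq:Enorm}, and use that quasi-locality bounds $\|\chi_{X\setminus B(y,R)}T\chi_{\{y\}}\|$ uniformly in $y$. The extra details you supply (boundedness of $T^{(R)}$ via the Schur test, and compatibility of the $\ell^\infty(X)$-valued inner products under the inclusion) are left implicit in the paper; the strict inequality $d(X\setminus B(y,R),\{y\})>R$, which you assert without comment, holds because bounded geometry makes the infimum attained, or you can simply truncate at $R+1$ instead.
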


\begin{proof}
Let $T\in C^*_{uq}(X)$. For $R>0$, let $T^{(R)} \in \B(\ell^2(X))$ be the restriction of $T$ to $E_R$, which has finite propagation. By \eqref{eq:Enorm}, we have
\[
 \|T-T^{(R)}\|_E
 =\sup_{y\in X}\big\|\chi_{X\setminus B(y,R)}T\delta_y\big\|.
\]
Since $T$ is quasi-local, the right-hand side tends to zero as $R\to\infty$.
\end{proof}

Given $\xi,\eta \in \mathcal{H}_E$, denote the rank-one operator $\theta_{\xi,\eta}$ on $\mathcal{H}_E$ by
\begin{equation}\label{EQ:theta}
\theta_{\xi,\eta}(\zeta)
:=\xi\langle\eta,\zeta\rangle_{\ell^\infty(X)} \quad \text{for} \quad \zeta \in \mathcal{H}_E.
\end{equation}
The compact operators on $\mathcal{H}_E$ are defined to be $\KK(\mathcal H_E):=\overline{\operatorname{span}}
\{\theta_{\xi,\eta}:\xi,\eta\in\mathcal H_E\}$.

We recall the following notion of $E$-Haagerup property from \cite[Definition 5.12]{KLS22}. In our unital setting, the local compactness condition therein is simply compactness on \(\mathcal H_E\) (cf. the proof of \cite[Proposition 5.16]{KLS22}). Thus, in the present setting, the definition takes the following form.

\begin{defn}\label{def:EHaagerup}
We say that $C^*_{uq}(X)$ has the \emph{$E$-Haagerup property with respect to $\ell^\infty(X)$} if there is a net of completely positive maps $\Phi_i:C^*_{uq}(X)\to C^*_{uq}(X)$ such that
\begin{enumerate}
  \item $\Phi_i|_{\ell^\infty(X)}=\Id_{\ell^\infty(X)}$;
  \item $\Phi_i$ can be extended to $\widetilde\Phi_i\in \KK(\mathcal H_E)$;
  \item $\|\Phi_i(T)-T\|_E\to0$ for every $T\in C^*_{uq}(X)$.
\end{enumerate}
\end{defn}

Using Theorem~\ref{thm:char by multiplier}, we manage to prove the following.

\begin{prop}\label{thm:relative_Haagerup}
For a discrete metric space $(X,d)$ of bounded geometry, $X$ can be coarsely embedded into Hilbert space \emph{if and only if} $C^*_{uq}(X)$ has the $E$-Haagerup property with respect to $\ell^\infty(X)$.
\end{prop}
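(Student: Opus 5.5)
For the forward direction I will take a coarse embedding $F:X\to\H$ and use the Schur multipliers $\Phi_t:=M_{\varphi_t}$ with $\varphi_t(x,y)=e^{-t\|F(x)-F(y)\|}$, as in the proof of ``(1) $\Rightarrow$ (2)'' of Theorem~\ref{thm:char by multiplier}. Each $\Phi_t$ is unital completely positive, preserves $C^*_{uq}(X)$, and fixes $\ell^\infty(X)$ because $\varphi_t(x,x)=1$; this gives condition (1). Condition (3) follows from Corollary~\ref{cor:ql-approx}, since $\|\Phi_t(T)-T\|_E\le\|\Phi_t(T)-T\|\to0$ as $t\to0$. The real work is condition (2). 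The kernel $\varphi_t$ lies in $\Pz(X)$ but is not finitely supported, so I will replace it by a truncation. Since $\varphi_t$ is positive-definite, $\varphi_t(x,y)=\langle\zeta_x,\zeta_y\rangle$ for unit vectors $\zeta_x$ in some Hilbert space. I will then try to realise $\Phi_t$ as $T\mapsto\sum$ of module operations, following the standard argument in \cite[Proposition 5.16]{KLS22} for uniform Roe algebras.

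Concretely, for $T$ of finite propagation the $E$-norm is computed column-wise by \eqref{eq:Enorm}. The module $\mathcal H_E$ has a natural description: by Lemma~\ref{lem:same-module} it is the closure of $C^*_u(X)$, and it is isomorphic to the Hilbert module of bounded functions $y\mapsto T\delta_y\in\ell^2(X)$, twisted by the translations coming from partial translations. I will show that a Schur multiplier $M_\psi$ extends to a compact operator on $\mathcal H_E$ whenever $\psi$ is supported on an entourage $E_R$ and satisfies $\sup_{d(x,y)\le R,\,x\notin K}|\psi(x,y)|\to0$ as the finite set $K\subseteq X$ grows. Equivalently, $\psi$ is a $c_0$-function on $E_R$. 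To do this I will decompose $E_R$ into finitely many partial translations $t_1,\dots,t_N$ (bounded geometry). On each piece, $M_{\chi_{\gr(t_j)}\psi}$ acts as $T\mapsto V_{t_j}f_j E(V_{t_j}^*T)$ with $f_j\in c_0(X)$. This is a finite combination of operators $\theta_{\xi,\eta}$, with $\xi=V_{t_j}f_j$ approximable by finitely supported elements, and $\eta=V_{t_j}$. Hence the map is compact on $\mathcal H_E$.

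The main obstacle is producing \emph{completely positive} maps with this $c_0$-and-finite-propagation property which still approximate the identity in $\|\cdot\|_E$. Truncating $\varphi_t$ to $E_R$ destroys positivity. I will therefore first try the following route, as in the Roe-algebra case. For $\varphi_t\in\Pz(X)$ and fixed $T\in C^*_{uq}(X)$, the truncation error $\|M_{\varphi_t}(T)-M_{\varphi_t\chi_{E_R}}(T)\|_E$ is bounded by $\sup_y\|\chi_{X\setminus B(y,R)}T\delta_y\|$. This tends to $0$ by quasi-locality, as in Lemma~\ref{lem:same-module}. So compactness only needs to be established for the map restricted to $E$-balls, and I will verify (2) by showing that $\widetilde\Phi_t$ is an $E$-norm limit of the compact maps $M_{\varphi_t\chi_{E_R}}$. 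For this, the bound $\|M_\psi\|_{\mathcal H_E\to\mathcal H_E}\le\sup_{x,y}|\psi(x,y)|$ holds column-wise. However, $M_{\varphi_t}-M_{\varphi_t\chi_{E_R}}$ is small uniformly on the unit ball of $\mathcal H_E$ only because $\varphi_t(x,y)\to0$ as $d(x,y)\to\infty$, which gives a bound of $\sup_{d(x,y)>R}\varphi_t$. The part inside $E_R$ is then handled by the partial-translation decomposition above. There, $\varphi_t$ restricted to $E_R$ is not $c_0$, so I will instead multiply by a positive-definite $c_0$ diagonal cutoff $h(x)h(y)$, with $h\in c_0(X)$, $0\le h\le1$, $h\to1$ pointwise. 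This keeps complete positivity, and condition (1) becomes only approximate. If exact unitality on $\ell^\infty(X)$ is required, I will fall back on the precise formulation in \cite[Definition 5.12]{KLS22}, where local compactness is checked against $\ell^\infty(X)$-cutoffs.

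For the converse, I will assume the $E$-Haagerup property and extract a coarse embedding. Given $\Phi_i$, I will define $\varphi_i(x,y):=\langle\delta_x,\Phi_i(e_{xy})\delta_y\rangle$, where $e_{xy}$ is the matrix unit. By bimodularity over $\ell^\infty(X)$, which follows from condition (1) via the multiplicative-domain argument, $\Phi_i$ is the Schur multiplier $M_{\varphi_i}$, and $\varphi_i$ is normalised positive-definite. Compactness on $\mathcal H_E$ forces $\varphi_i\in\Pz(X)$: the $e_{xy}$ with $d(x,y)$ large are uniformly bounded in $\mathcal H_E$ but asymptotically orthogonal. Condition (3) applied to finite-propagation partial translations yields $\varphi_i\to1$ uniformly on each $E_R$. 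The standard argument then produces a coarse embedding, as in Theorem~\ref{thm:char by multiplier} (2)$\Rightarrow$(1), or directly \cite[Theorem 5.16]{KLS22}.
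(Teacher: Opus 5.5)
\textbf{There is a genuine gap in the forward direction, at condition (2).} You require the coefficient functions in the partial-translation decomposition to lie in $c_0(X)$, so that $\xi=V_{t_j}f_j$ is approximable by finitely supported elements. This misreads $\KK(\mathcal H_E)$: it is the closed span of $\theta_{\xi,\eta}$ for \emph{all} $\xi,\eta\in\mathcal H_E$, and $\mathcal H_E$ is a Hilbert module over $\ell^\infty(X)$, not over $c_0(X)$.

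Your own formula already gives $M_{\chi_{\gr(t_j)}\psi}=\theta_{V_{t_j}f_j,V_{t_j}}$ for an arbitrary $f_j\in\ell^\infty(X)$, since $V_{t_j}f_jE(V_{t_j}^*T)=V_{t_j}f_j\langle V_{t_j},T\rangle_{\ell^\infty(X)}$. Hence $\widetilde M_{\varphi_t\chi_{E_R}}\in\KK(\mathcal H_E)$ with no decay assumption inside $E_R$. Moreover, $\|\widetilde M_{\varphi_t}-\widetilde M_{\varphi_t\chi_{E_R}}\|\le\sup_{d(x,y)>R}\varphi_t(x,y)\le e^{-t\rho_-(R)}\to0$, so the extension of $\Phi_t=M_{\varphi_t}$ itself is compact. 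This is exactly Lemma~\ref{lem:Schur extended to compact}, which the paper combines with Theorem~\ref{thm:char by multiplier}(2) so that $M_{\varphi_t}$ satisfies (1)--(3) outright. The ``main obstacle'' you identify is also not real. Only $\Phi_t$ must be completely positive; its norm approximants inside $\KK(\mathcal H_E)$ can be arbitrary module maps.

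\textbf{The workaround you adopt instead breaks the definition.} Take $\psi(x,y)=\varphi_t(x,y)h(x)h(y)$ with $h\in c_0(X)$. Then $M_\psi(1)=h^2\neq1$, so (1) fails. Worse, $\|M_\psi(1)-1\|_E=\sup_y|1-h(y)^2|=1$ for every such $h$ when $X$ is infinite. So (3) fails at $T=1$ no matter how $h\to1$ pointwise, because $\|\cdot\|_E$ is a supremum over columns. Appealing to \cite[Definition 5.12]{KLS22} does not rescue this, since in the unital setting its local compactness condition is precisely compactness on $\mathcal H_E$.

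\textbf{Your converse follows the paper's argument, with one point to make precise.} ``Asymptotically orthogonal'' must mean the following: $\langle\eta,e_{xy}\rangle_{\ell^\infty(X)}=\overline{\eta_{xy}}e_{yy}$, and $\sup_{d(x,y)>R}|\eta_{xy}|\to0$ for each fixed $\eta\in\mathcal H_E$ (via Lemma~\ref{lem:same-module}). This yields $\|\theta_{\xi,\eta}(e_{xy})\|_E\le\|\xi\|_E|\eta_{xy}|$. Mutual orthogonality of the $e_{xy}$ alone would not suffice: $\theta_{1,1}=E$ is compact, while $\|\theta_{1,1}(e_{yy})\|_E=1$ for the mutually orthogonal $e_{yy}$.
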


\begin{rem}
Note that the uniform Roe algebra version of Proposition~\ref{thm:relative_Haagerup} was already obtained in \cite[Corollary 5.20]{KLS22}, and here we show that the same characterisation remains valid after replacing \(C_u^*(X)\) by the larger algebra \(C_{uq}^*(X)\).
\end{rem}

To prove Proposition~\ref{thm:relative_Haagerup}, we need the following auxiliary lemma.

\begin{lem}\label{lem:Schur extended to compact}
Let $\phi:X\times X\to\mathbb C$ be a Schur multiplier kernel. Then the associated Schur multiplier $M_\phi$ induces an adjointable operator $\widetilde M_\phi$ on $\mathcal H_E$ with
\[
  \|\widetilde M_\phi\|=\|\phi\|_\infty\le \|\phi\|_S \quad \text{and} \quad
  \widetilde M_\phi^*=\widetilde M_{\overline\phi}.
\]
Moreover, $\widetilde M_\phi\in\KK(\mathcal H_E)$ \emph{if and only if} $\lim\limits_{R\to\infty}\sup_{d(x,y)>R}|\phi(x,y)|=0$.
\end{lem}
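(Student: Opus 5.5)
The plan has three steps: define $\widetilde M_\phi$ and compute its norm and adjoint; show sufficiency by approximating $\widetilde M_\phi$ with finite-rank module operators built from partial translations; and show necessity by testing against a family of elements that tends weakly to zero.

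\emph{Step 1: definition, norm and adjoint.} For $T\in C^*_{uq}(X)$, formula \eqref{eq:Enorm} gives
\[
\|M_\phi(T)\|_E=\sup_y\Big(\sum_x|\phi(x,y)|^2|T_{xy}|^2\Big)^{1/2}\le\|\phi\|_\infty\|T\|_E .
\]
Since $M_\phi$ preserves $C^*_{uq}(X)$, it extends to a bounded map $\widetilde M_\phi$ on $\mathcal H_E$ with $\|\widetilde M_\phi\|\le\|\phi\|_\infty$. The map is right $\ell^\infty(X)$-linear because $M_\phi(Tf)=M_\phi(T)f$ for diagonal $f$. For the reverse norm bound, test on matrix units: $e_{xy}\in C^*_u(X)$ has $\|e_{xy}\|_E=1$ and $M_\phi(e_{xy})=\phi(x,y)e_{xy}$, so $\|\widetilde M_\phi\|\ge\|\phi\|_\infty$. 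The bound $\|\phi\|_\infty\le\|\phi\|_S$ is the same computation with the operator norm. For the adjoint, compute
\[
E\big(M_\phi(S)^*T\big)_{yy}=\sum_x\phi(x,y)\overline{S_{xy}}\,T_{xy}=E\big(S^*M_{\overline\phi}(T)\big)_{yy},
\]
which gives $\widetilde M_\phi^*=\widetilde M_{\overline\phi}$.

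\emph{Step 2: sufficiency.} Assume $\phi$ vanishes at infinity off the diagonal. By bounded geometry, for each $R$ the entourage $E_R$ decomposes as a finite union of partial translations $K_1,\dots,K_N$ with $N$ depending on $N_X(R)$. Write $V_j=\chi_{K_j}\in C^*_u(X)$ and let $f_j\in\ell^\infty(X)$ be $f_j(y)=\phi(t_j(y),y)$ on the domain of $t_j$. For $T\in C^*_{uq}(X)$, the product $V_jE(V_j^*T)$ picks out the entries $T_{t_j(y),y}$, placed at $(t_j(y),y)$. Hence the operator $\sum_j\theta_{V_j,V_j\bar f_j}$ (up to conjugation conventions) sends $T$ to $M_{\phi\chi_{E_R}}(T)$, and so it equals $\widetilde M_{\phi\chi_{E_R}}$ on $\mathcal H_E$. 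This is a finite sum of rank-one operators, so $\widetilde M_{\phi\chi_{E_R}}\in\KK(\mathcal H_E)$. By Step 1,
\[
\|\widetilde M_\phi-\widetilde M_{\phi\chi_{E_R}}\|=\sup_{d(x,y)>R}|\phi(x,y)|\to0 ,
\]
so $\widetilde M_\phi\in\KK(\mathcal H_E)$. Making the rank-one bookkeeping precise, in particular disjointifying the partial translations, is the step most likely to require care.

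\emph{Step 3: necessity.} Suppose $|\phi(x_n,y_n)|\ge\delta$ with $d(x_n,y_n)\to\infty$; this is the main obstacle. The test elements will be $e_{x_ny_n}$, which satisfy $\langle e_{x_ny_n},\widetilde M_\phi e_{x_ny_n}\rangle=\phi(x_n,y_n)\chi_{\{y_n\}}$, with norm at least $\delta$. The goal is to show that $\langle e_{x_ny_n},Ke_{x_ny_n}\rangle\to0$ in norm for every $K\in\KK(\mathcal H_E)$. By density it suffices to take $K=\theta_{\xi,\eta}$ with $\xi,\eta$ of finite propagation. Then
\[
\langle e_{x_ny_n},\theta_{\xi,\eta}e_{x_ny_n}\rangle=E(e_{y_nx_n}\xi)\,E(\eta^*e_{x_ny_n}),
\]
which is the diagonal element at $y_n$ with value $\xi_{x_ny_n}\overline{\eta_{x_ny_n}}$. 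This vanishes once $d(x_n,y_n)$ exceeds the propagations of $\xi$ and $\eta$. Approximating a general compact operator by such finite sums then yields a contradiction.
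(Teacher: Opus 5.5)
Your proposal is correct and follows essentially the same route as the paper. The three steps match: the $E$-norm computation plus the matrix-unit test for the norm and adjoint, the decomposition of $E_R$ into partial translations writing $\widetilde M_{\phi\chi_{E_R}}$ as a finite sum of rank-one operators for sufficiency, and, for necessity, testing compact operators on matrix units $e_{xy}$ with $d(x,y)\to\infty$ after approximating by rank-ones whose legs have finite propagation (this density is Lemma~\ref{lem:same-module}). The differences are cosmetic: the middle term of your adjoint formula should read $\sum_x\overline{\phi(x,y)}\,\overline{S_{xy}}\,T_{xy}$, which is harmless, and you bound $\langle e_{xy},Ke_{xy}\rangle$ where the paper bounds $\|K(e_{xy})\|_E\le\|\xi\|_E\,|\eta_{xy}|$, which needs decay of only one leg.
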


\begin{proof}
As we noticed before, the Schur multiplier $M_\phi$ maps $C^*_{uq}(X)$ into itself. Moreover, for $T \in C^*_{uq}(X)$, \eqref{eq:Enorm} gives
\[
 \|M_\phi(T)\|_E^2 =\sup_{y\in X}\sum_{x\in X}|\phi(x,y)|^2|T_{xy}|^2 \le \|\phi\|_\infty^2\|T\|_E^2.
\]
Hence, $M_\phi$ induces a bounded $\ell^\infty(X)$-module operator $\widetilde M_\phi$ on $\mathcal H_E$ and clearly, $\|\widetilde M_\phi\|=\|\phi\|_\infty\le \|\phi\|_S$. Moreover, $\langle M_\phi(S), T\rangle_{\ell^\infty(X)} = \langle S, M_{\overline\phi}(T) \rangle_{\ell^\infty(X)}$ shows that
%\[
%  \langle M_\phi(S), T\rangle_{\ell^\infty(X)} = \langle S, M_{\overline\phi}(T) \rangle_{\ell^\infty(X)} \quad \text{for any} \quad S,T \in C^*_{uq}(X)
%\]
$\widetilde M_\phi$ is adjointable and $\widetilde M_\phi^*=\widetilde M_{\overline\phi}$.

For the ``Moreover'' part, first assume that $\lim\limits_{R\to\infty}\sup_{d(x,y)>R}|\phi(x,y)|=0$. For each $R>0$, denote $\phi_R$ the point-wise multiplication $\phi\cdot \chi_{E_R}$. Since $\chi_{E_R}$ is a Schur multiplier kernel, so is $\phi_R$. Then \eqref{eq:Enorm} shows that
\[
\left\|\widetilde M_\phi-\widetilde M_{\phi_R}\right\| =\|\phi(1-\chi_{E_R})\|_\infty = \sup_{d(x,y)>R}|\phi(x,y)|\to 0 \quad \text{as} \quad R \to +\infty.
\]
For each $R>0$, bounded geometry allows us to decompose $E_R= \bigsqcup_{j=1}^N \gr(t_j)$ for partial translations $t_j: D_j \to R_j$. For each \(j\), set \(V_j\in C_u^*(X)\) and \(a_j\in\ell^\infty(X)\) by
\[
V_j\delta_y=
\begin{cases}
\delta_{t_j(y)},&y\in D_j,\\
0,&y\notin D_j
\end{cases}
\quad \text{and} \quad
a_j(y)=
\begin{cases}
\phi_R(t_j(y),y),&y\in D_j,\\
0,&y\notin D_j.
\end{cases}
\]
Direct calculations show that $\widetilde M_{\phi_R}=\sum_{j=1}^N\theta_{V_ja_j,V_j} \in \KK(\mathcal H_E)$ for $\theta_{V_ja_j,V_j}$ from \eqref{EQ:theta}.
Hence, their limit $\widetilde M_\phi$ belongs to $\KK(\mathcal H_E)$.

Conversely, suppose that $\widetilde M_\phi\in \KK(\mathcal H_E)$. Given $\eta \in \mathcal H_E$, we can approximate $\eta$ in terms of finite propagation operators under the $E$-norm by Lemma~\ref{lem:same-module}. Hence,
\begin{equation}\label{eq:module-vector-decay}
  \lim_{R\to\infty}\sup_{d(x,y)>R}|\eta_{xy}|=0.
\end{equation}
For the matrix unit $e_{xy}\in \B(\ell^2(X))$, we have $\theta_{\xi,\eta}(e_{xy}) = \xi E(\eta^*e_{xy})=\overline{\eta_{xy}}\,\xi e_{yy}$. Hence, $\|\theta_{\xi,\eta}(e_{xy})\|_E \le \|\xi\|_E \cdot |\eta_{xy}|$,
which tends to zero uniformly as $d(x,y)\to\infty$ by \eqref{eq:module-vector-decay}. By finite $\ell^\infty(X)$-linear combinations and taking limits, we obtain
\[
  \lim_{R\to\infty}\sup_{d(x,y)>R}\|K(e_{xy})\|_E=0 \quad \text{for any} \quad K\in\KK(\mathcal H_E).
\]
Taking $K=\widetilde M_\phi$, we have $\|\widetilde M_\phi(e_{xy})\|_E=|\phi(x,y)|$. This proves the converse.
\end{proof}

Finally, we prove Proposition~\ref{thm:relative_Haagerup}.

\begin{proof}[Proof of Proposition~\ref{thm:relative_Haagerup}]
%``$(1)\Rightarrow(3)$'': Assume that $F:X\to\H$ is a coarse embedding to some Hilbert space $\H$. For $t>0$, put $\phi_t(x,y):=\exp(-t\|F(x)-F(y)\|)$. As noticed before, $\phi_t\in \Pz(X)$. Then (3) follows directly from Proposition~\ref{lem:ql-approx}.
%
Assume that $X$ is coarsely embeddable. By Theorem~\ref{thm:char by multiplier}(2), take a sequence $\{\phi_n\}_{n\in\mathbb N}\subseteq \Pz(X)$ such that $\|M_{\phi_n}(T)-T\| \to 0$ for any $T \in C^*_{uq}(X)$. Note that $M_{\phi_n}$ is unital completely positive and fixes $\ell^\infty(X)$. By Lemma~\ref{lem:Schur extended to compact}, we have $\widetilde M_{\phi_n}\in\K(\mathcal \H_E)$. Moreover, $\|S\|_E\le\|S\|$ for any $S \in C^*_{uq}(X)$ implies that $\|M_{\phi_n}(T)-T\|_E \to 0$ for any $T \in C^*_{uq}(X)$.

Conversely, let $\{\Phi_i\}$ be a net satisfying the conditions in Definition~\ref{def:EHaagerup}. Since $\Phi_i$ is ucp and fixes $\ell^\infty(X)$, it is a $\ell^\infty(X)$-bimodule map. For $x,y\in X$, let $e_{xy}$ denote the standard matrix unit.
Since \(e_{xy}=\chi_{\{x\}}e_{xy}\chi_{\{y\}}\), we have
\[
\Phi_i(e_{xy})=\chi_{\{x\}}\Phi_i(e_{xy})\chi_{\{y\}}\in \chi_{\{x\}} C^*_{uq}(X)\chi_{\{y\}}=\CC e_{xy}.
\]
Thus, there is a scalar kernel \(h_i: X \times X \to \CC\) such that $\Phi_i(e_{xy})=h_i(x,y)e_{xy}$. Moreover,
\[
\chi_{\{x\}}\Phi_i(T)\chi_{\{y\}} =\Phi_i(\chi_{\{x\}}T\chi_{\{y\}}) =h_i(x,y)T_{xy}e_{xy} \quad \text{for any} \quad T\in C^*_{uq}(X).
\]
Hence, $\Phi_i(T)_{xy}=h_i(x,y)T_{xy}$ or, in other words, $\Phi_i = M_{h_i}$. By the properties of $\Phi_i$, it is routine to check that $h_i$ is normalised positive-definite. Since $\widetilde \Phi_i$ is compact, Lemma~\ref{lem:Schur extended to compact} shows that for each fixed $i$,
\begin{equation}\label{eq:offdiag-hi}
  \lim_{S\to\infty}\sup_{d(x,y)>S}|h_i(x,y)|=0.
\end{equation}

We now show that $h_i\to1$ uniformly on each $E_R$. Fixing $R>0$, regard $\chi_{E_R}$ as a finite propagation operator in $C^*_u(X)$. By \eqref{eq:Enorm}, we have
\[
\|\Phi_i(\chi_{E_R})-\chi_{E_R}\|_E = \sup_{y\in X}\left(\sum_{x\in B(y,R)}|h_i(x,y)-1|^2\right)^{1/2}\ge
\sup_{d(x,y)\le R}|h_i(x,y)-1|.
\]
Since the left-hand side goes to $0$, we obtain
\[
\lim_i\sup_{d(x,y)\le R}|h_i(x,y)-1|=0
\quad \text{for every} \quad R>0.
\]
By the standard positive-definite-kernel characterisation of coarse embeddability (see, \emph{e.g.}, \cite[Theorem 3.2.8]{Wil09b}), \(X\) coarsely embeds into Hilbert space.
\end{proof}

\subsection{Application: coarse embeddability into $\ell^p$-spaces}

%As another application of Theorem~\ref{thm:char by multiplier}, we now study the relation between coarse embeddability into Hilbert space and into $\ell^p$-spaces for $p\in (2,+\infty)$. Recall from \cite{Nowak2006} that the separable Hilbert space can be coarsely embedded into $\ell^p$-space for any $p\in (2,+\infty)$ (in fact, for any $p \in [1, +\infty]$). So we focus on the converse.

For $2<p<\infty$, a central question in coarse geometry is whether coarse embeddability into $\ell^p$ implies coarse embeddability into Hilbert space for a discrete metric space of bounded geometry.
%whether a discrete metric space of bounded geometry that coarsely embeds into $\ell^p$ can be coarsely embedded into Hilbert space.
The case of $1\le p\le2$ and the converse implication are already known in \cite{Nowak2006}. We provide a partial answer with the help of Theorem~\ref{thm:char by multiplier}.

Recall that a $C^*$-algebra $\mathcal{A}$ has \emph{real rank zero}, denoted by $\RR(\mathcal{A})=0$, if self-adjoint elements with finite spectrum are dense in the set of all self-adjoint elements in $\mathcal{A}$. Also recall that $\mathcal{A}$ has \emph{property $\SP$} if every nonzero hereditary $C^*$-subalgebra of $\mathcal{A}$ contains a non-zero projection.
%It follows from \cite[Theorem 2.6]{BP91} that $RR(\mathcal{A}) = 0$ implies property (SP).
We have the following:

\begin{prop}\label{thm:lp and l2}
Fix $2<p<\infty$, and let $X$ be a discrete metric space of bounded geometry. Then the following are equivalent:
\begin{enumerate}
 \item $X$ coarsely embeds into a Hilbert space;
 \item $X$ coarsely embeds into $\ell^p$ and $\RR(J_0(X))=0$;
 \item $X$ coarsely embeds into $\ell^p$ and $J_0(X)/\KK(\ell^2(X))$ has property $\SP$.
\end{enumerate}
\end{prop}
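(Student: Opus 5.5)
The proof goes around the cycle (1) $\Rightarrow$ (2) $\Rightarrow$ (3) $\Rightarrow$ (1). Theorem~\ref{thm:char by multiplier} does the work in the first and last steps, and the remaining input is a compactness result for ghost projections on $\ell^p$-embeddable sparse spaces.

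\textbf{(1) $\Rightarrow$ (2).} Separable Hilbert space coarsely (even isometrically, via the Gaussian/Mazur-type construction) embeds into $\ell^p$ by \cite{Nowak2006}, so $X$ coarsely embeds into $\ell^p$. By Theorem~\ref{thm:char by multiplier}(5), $J_0(X)=\KK(\ell^2(X))$. The compact operators have real rank zero, since finite-rank self-adjoint operators are dense among compact self-adjoint ones.

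\textbf{(2) $\Rightarrow$ (3).} Real rank zero passes to quotients, because projections and hence finite-spectrum self-adjoints lift approximately. So $\RR(J_0(X)/\KK(\ell^2(X)))=0$. Every hereditary $C^*$-subalgebra of a real rank zero algebra again has real rank zero. A nonzero real rank zero algebra contains a nonzero projection, since a nonzero self-adjoint element is approximated by finite-spectrum ones whose spectral projections are nonzero. This gives property $\SP$; the case where the quotient is zero is vacuous.

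\textbf{(3) $\Rightarrow$ (1).} Suppose $X$ embeds into $\ell^p$ but not into Hilbert space. The proof of Theorem~\ref{thm:char by multiplier} ``(3) $\Rightarrow$ (1)'' gives a sparse $Y=\bigsqcup_n Y_n\subseteq X$ and a block-diagonal positive non-compact $T=\SOT\text{-}\sum_n T_n\in J_0(X)$. Its image $[T]$ in $J_0(X)/\KK$ is nonzero. By $\SP$, the hereditary subalgebra $\overline{[T](J_0/\KK)[T]}$ contains a nonzero projection $p$. I would lift $p$ to a self-adjoint $a\in \overline{TJ_0(X)T}$ with $a^2-a$ compact; then $a=\chi_Y a\chi_Y$, and $a$ lies in the ideal $J_0(X)$ and hence is a ghost.

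Next I would show that $a$ is block-diagonal up to a compact. The plan is to approximate $a$ by $TbT$ with $b\in C_u^*(X)$ of finite propagation. Since $d(Y_n,Y_m)\to\infty$, the off-diagonal blocks $\chi_{Y_n}TbT\chi_{Y_m}$ vanish except for finitely many pairs, and these are compact. Replacing $a$ by $A=\SOT\text{-}\sum_n\chi_{Y_n}a\chi_{Y_n}$ then gives $A^2-A$ compact, so $\|A_n^2-A_n\|\to0$. Exactly as in the proof of Theorem~\ref{prop:ghost proj are compact}, the spectral projections $Q_n=\chi_{[1/2,1]}(A_n)$ assemble into a block-diagonal projection $Q$ with $Q-A$ compact. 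This $Q$ lies in $C_u^*(X)$, because block-diagonal functional calculus is norm-controlled uniformly across blocks. It is a ghost, and it is non-compact because $p\neq0$.

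It remains to contradict the existence of a non-compact ghost projection in $C^*_u(Y)\subseteq C^*_{uq}(X)$ on the sparse space $Y$, which coarsely embeds into $\ell^p$. This is the step I expect to be the main obstacle, since the paper has not proved an $\ell^p$ analogue of Theorem~\ref{prop:ghost proj are compact}. I would import the $K$-theoretic input. For sparse spaces coarsely embeddable into $\ell^p$ (Kasparov--Yu), the inclusion $\KK\hookrightarrow G_u(Y)$ should induce an isomorphism on $K$-theory, by the argument of \cite[Corollary 36]{Fin14} with the $\ell^p$ Bott--Dirac machinery replacing the Hilbert one. Then \cite[Lemma 4.4]{GLW26} yields that every ghost projection in $C_u^*(Y)$ is compact. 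Our $Q$ is block-diagonal of finite propagation up to norm approximation, so it genuinely lies in the Roe algebra, as that argument requires. This contradiction gives (1).
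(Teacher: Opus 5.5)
Your proposal is correct and has the same skeleton as the paper's proof. For (1)$\Rightarrow$(2) you use Theorem~\ref{thm:char by multiplier} together with \cite{Nowak2006}; for (2)$\Rightarrow$(3) you use Brown--Pedersen-type facts. For (3)$\Rightarrow$(1) you produce a non-compact ghost projection over a sparse piece $Y$ and rule it out with the $\ell^p$ compactness result.

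The difference is in how you obtain that projection. The paper uses the corner $[\chi_Y]\bigl(J_0(X)/\KK(\ell^2(X))\bigr)[\chi_Y]\cong J_0(Y)/\KK(\ell^2(Y))$. This corner is non-zero simply by applying Theorem~\ref{thm:char by multiplier} to $Y$, so there is no need to reopen its proof to extract your operator $T$. The paper then lifts $q$ to a positive contraction $a\in J_0(Y)$ and takes $Q=\chi_{(t,\|a\|]}(a)$ for some $t\in(1/3,2/3)\setminus\sigma(a)$. Such a $t$ exists because compactness of $a-a^2$ forces $\sigma(a)\cap[1/3,2/3]$ to be finite. Your hereditary subalgebra generated by $[T]$ works equally well, and the lift into $\overline{TJ_0(X)T}$ is legitimate. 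Your block-diagonalisation via $TbT$ and blockwise spectral projections is also correct, but it buys nothing: the external input applies to arbitrary ghost projections, not only block-diagonal ones.

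The input you anticipated does exist. The paper invokes \cite[Theorem~2.6 and Lemma~4.4]{GLW26}, which give compactness of every ghost projection in the Roe algebra $C^*(Y)$ for sparse $Y$ coarsely embeddable into $\ell^p$. No adaptation of \cite{Fin14} is needed. Note that this is applied in $C^*(Y)$, with $C_u^*(Y)$ regarded as a rank-one corner of it. You should route your last step this way rather than through a $K$-theory isomorphism for the uniform ghost ideal $G_u(Y)$.

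Two minor corrections:
\begin{itemize}
\item Separable Hilbert space embeds isometrically into $L^p$ but not into $\ell^p$ for $p\neq2$. What you use from \cite{Nowak2006} is only a coarse embedding.
\item Real rank zero passes to quotients by lifting a self-adjoint element, approximating it upstairs by finite-spectrum elements, and pushing forward. No lifting of projections is required.
\end{itemize}
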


\begin{rem}\label{rem:RR and SP}
Since \(\RR(\KK(\ell^2(X))) = 0\) and \(K_1(\KK(\ell^2(X)))=0\), \cite[Theorem 3.14 and Proposition 3.15]{BP91} shows that $\RR(J_0(X))=0$ \emph{if and only if} $\RR(J_0(X)/\KK(\ell^2(X))) = 0$.

On the other hand, Lemma~\ref{lem:SC implies ghost} shows that \(\KK(\ell^2(X))\subseteq J_0(X)\). This implies that every non-zero hereditary \(C^*\)-subalgebra of \(J_0(X)\) contains a non-zero rank-one projection, so \(J_0(X)\) always has property $\SP$.
\end{rem}

\begin{proof}[Proof of Proposition~\ref{thm:lp and l2}]
``(1) $\Rightarrow$ (2)'': By Theorem~\ref{thm:char by multiplier}, we have $J_0(X)=\KK(\ell^2(X))$, which has real rank zero.
Moreover, \cite{Nowak2006} shows that a separable Hilbert space coarsely embeds into $\ell^p$. Since $X$ is countable, we conclude (2).

``(2) $\Rightarrow$ (3)'': By Remark~\ref{rem:RR and SP}, $\RR(J_0(X)/\KK(\ell^2(X))) = 0$. Now \cite[Theorem~2.6]{BP91} shows that real rank zero implies property $\SP$, so we obtain (3).

``(3) $\Rightarrow$ (1)'': Suppose that (3) holds but $X$ does not coarsely embed into Hilbert space. By Proposition~\ref{prop:sparse-reduction}, there is a sparse subspace $Y\subseteq X$ which also fails to coarsely embed into Hilbert space. %By assumption, $Y$ coarsely embeds into $\ell^p$.
It is routine to check that $J_0(Y)=\chi_Y J_0(X)\chi_Y$, which implies that
\[
J_0(Y)/\KK(\ell^2(Y))
\cong e\cdot \bigl(J_0(X)/\KK(\ell^2(X))\bigr)\cdot e \quad \text{for} \quad e:=[\chi_Y] \in C_u^*(X)/\KK(\ell^2(X)).
\]
Hence, $B:=J_0(Y)/\KK(\ell^2(Y))$ is a hereditary $C^*$-subalgebra of $J_0(X)/\KK(\ell^2(X))$. Moreover, $B\ne 0$ by
Theorem~\ref{thm:char by multiplier}, since $Y$ does not coarsely embed into Hilbert space.

By assumption, $B$ contains a non-zero projection $q$. Choose a positive contraction $a\in J_0(Y)$ whose image in $B$ is $q$. Since \(a-a^2\) is compact and \(s(1-s)\geq 2/9\) on \([1/3,2/3]\), the spectral mapping theorem implies that \(\sigma(a)\cap[1/3,2/3]\) is finite. Taking $t\in(1/3,2/3)\setminus\sigma(a)$, we define $Q:=\chi_{(t,\|a\|]}(a)$. Then $Q \in J_0(Y)$, whose image in $B$ is $q$ by the naturality of continuous functional calculus.

By Lemma~\ref{lem:SC implies ghost}, \(Q\) is a ghost projection in \(C_u^*(Y)\). Since \(Y\) is sparse and coarsely embeds into \(\ell^p\), \cite[Theorem~2.6 and Lemma~4.4]{GLW26} imply that every ghost projection in \(C^*(Y)\) is compact. Viewing \(C_u^*(Y)\) as a rank-one corner of \(C^*(Y)\), we conclude that \(Q\) is compact. Hence \(q=0\), which is a contradiction.
%By Lemma~\ref{lem:SC implies ghost}, $J_0(Y)\subseteq G_u(Y)$, so $Q$ is a ghost projection in $C_u^*(Y)$. Since $Y$ is sparse and coarsely embeds into $\ell^p$, \cite[Theorem~2.6 and Lemma~4.4]{GLW26} shows that every ghost projection in $C^*(Y)$ is compact. This implies that $Q$ is compact since applying an isometric embedding of $\CC$ into an infinite-dimensional separable Hilbert space identifies $Q$ with a ghost projection in $C^*(Y)$. This shows that $q=0$, which leads to a contradiction.
\end{proof}

\section{Characterisation III: operator norm state localisation}\label{sec:onsl}

Finally, we focus on ONSL and prove Theorem~\ref{thm:CE and ONL}.

%Recall that given a positive trace-class operator $D$ on $\ell^2(X)$ with $\Tr(D)=1$, we have a normal state $\phi_D$ on $\B(\ell^2(X))$ defined by
%\[
%\phi_D: T \mapsto \Tr(TD).
%\]
%It is known that every normal state $\phi$ on $\B(\ell^2(X))$ has this form, and $D$ is called the \emph{density operator} of $\phi$.
%
%In this section, we introduce a weak version of the operator norm localisation property (ONL) to characterise coarse embeddability. The proof uses the two key ingredients developed above: the matrix obstruction of Proposition~\ref{prop:matrix-obstruction} and the Schur approximation from Proposition~\ref{lem:ql-approx}.
%
%\subsection{State localisation and coarse embeddability}

Recall from Definition~\ref{defn:ONSL} that a discrete metric space $(X,d)$ of bounded geometry is said to have ONSL if for any $R>0$ and $\varepsilon \in (0,1)$, there exists a non-increasing function $\omega:[0,\infty)\to [0,1]$ with $\lim_{s\to\infty}\omega(s)=0$ such that for any positive operator $T\in \B(\ell^2(X))$ with $\ppg(T)\le R$, there exists a positive trace-class operator $D$ on $\ell^2(X)$ with $\Tr(D)=1$ and satisfying:
\begin{equation}\label{EQ:TD control}
\Tr(TD)\ge(1-\varepsilon)\|T\|
\end{equation}
and
\begin{equation}\label{EQ:D condition}
|D_{xy}|\le \omega(d(x,y))\sqrt{D_{xx}D_{yy}} \quad \text{for any} \quad x,y\in X.
\end{equation}

%First, recall that given a positive trace-class operator $D$ on $\ell^2(X)$ with $\Tr(D)=1$, we have a normal state $\phi_D$ on $\B(\ell^2(X))$ defined by
%\[
%\phi_D: T \mapsto \Tr(TD).
%\]
%It is known that every normal state $\phi$ on $\B(\ell^2(X))$ has this form, and $D$ is called the \emph{density operator} of $\phi$.

%\begin{defn}\label{defn:ONSL}
%A discrete metric space $(X,d)$ of bounded geometry is said to have the \emph{operator norm state localisation property (ONSL)} if for any $R>0$ and $\varepsilon \in (0,1)$, there exists a non-increasing function $\omega:[0,\infty)\longrightarrow[0,1]$ with $\lim_{s\to\infty}\omega(s)=0$ such that for any positive operator $T\in \B(\ell^2(X))$ with $\ppg(T)\le R$, there exists a positive trace-class operator $D$ on $\ell^2(X)$ with $\Tr(D)=1$ satisfying:
%\begin{equation}\label{EQ:TD control}
%\Tr(TD)\ge(1-\varepsilon)\|T\|
%\end{equation}
%and
%\begin{equation}\label{EQ:D condition}
%|D_{xy}|\le \omega(d(x,y))\sqrt{D_{xx}D_{yy}} \quad \text{for any} \quad x,y\in X.
%\end{equation}
%\end{defn}

%We show that ONSL characterises coarse embeddability.
%
%\begin{thm}\label{thm:CE and ONL}
%Let $(X,d)$ be a discrete metric space of bounded geometry. Then $X$ can be coarsely embedded into some Hilbert space \emph{if and only if} $X$ satisfies the operator norm state localisation property.
%\end{thm}

Theorem~\ref{thm:CE and ONL} claims that ONSL characterises coarse embeddability. Now we present the proof.

\begin{proof}[Proof of Theorem~\ref{thm:CE and ONL}]
Assume first that $F:X\to \H$ is a coarse embedding with control functions $\rho_-,\rho_+$. Fix $R>0$ and $0<\varepsilon<1$. For $t>0$, let
\[
 \varphi_t(x,y):=e^{-t\|F(x)-F(y)\|} \quad \text{for} \quad x,y\in X
\]
as in Proposition~\ref{lem:ql-approx}. Choose $t>0$ sufficiently small such that
\[
 N_X(R)\cdot q_t(\rho_+(R))<\frac{\varepsilon^2}{64},
\]
and put $\omega(s):=e^{-t\rho_-(s)}$. Then $\omega$ is non-increasing and tends to zero at infinity.

Let $0\le T\in B(\ell^2(X))$ have propagation at most $R$, and assume that $T \neq 0$. Since $\chi_AT\chi_B=0$ whenever $d(A,B)>R$, Corollary~\ref{cor:ql-approx} gives $\|M_{\varphi_t}(T)-T\|<\frac\varepsilon2\|T\|$. Choose a unit vector $\xi\in\ell^2(X)$ such that
\begin{equation}\label{EQ:estimate in ONSL}
 \langle\xi,M_{\varphi_t}(T)\xi\rangle
 \ge \|M_{\varphi_t}(T)\|-\frac\varepsilon2\|T\|.
\end{equation}
Let $P_\xi$ be the rank-one projection onto $\CC\xi$ and define $D:=M_{\varphi_t}(P_\xi)$. Since $M_{\varphi_t}$ is completely positive, $D\ge0$. Moreover, we have $D_{xx}=|\xi_x|^2$, so $D$ is trace class and $\Tr(D)=1$. Also, we have
\[
 |D_{xy}|=|\varphi_t(x,y)|\cdot |\xi_x|\cdot |\xi_y|
 \le\omega(d(x,y))\sqrt{D_{xx}D_{yy}}.
\]
Since $\varphi_t$ is real and symmetric, direct calculations show that
\[
 \Tr(TD) = \Tr(T \cdot M_{\varphi_t}(P_\xi)) = \Tr(M_{\varphi_t}(T) \cdot P_\xi) = \langle\xi,M_{\varphi_t}(T)\xi\rangle.
\]
By \eqref{EQ:estimate in ONSL}, we obtain
\[
\langle\xi,M_{\varphi_t}(T)\xi\rangle \geq \|M_{\varphi_t}(T)\|-\frac\varepsilon2\|T\| \geq \|T\|-\|M_{\varphi_t}(T)-T\|-\frac\varepsilon2\|T\| \geq (1-\varepsilon)\|T\|.
\]
Thus, $\Tr(TD) \geq (1-\varepsilon)\|T\|$ and hence, $X$ satisfies ONSL.

Conversely, suppose that $X$ satisfies ONSL but does not coarsely embed into Hilbert space. Let $Y=\bigsqcup_nY_n\subseteq X$, $C, r_n, S_n, M_n, S_n^{(R)}$ be given by Proposition~\ref{prop:matrix-obstruction}. We regard $S_n$, $M_n$ and $S_n^{(R)}$ as operators on $\ell^2(X)$ by extending them by zero.
%The idea is to force the same density operator to have a uniformly positive pairing with $S_n$ and, at the same time, an asymptotically vanishing pairing with $S_n$. This will lead to a contradiction. The first conclusion comes from norm localisation applied to a finite-propagation truncation of $S_n$, while the second comes from the domination $S_n\le M_n$ and the distant support of $M_n$.

Choose $R>0$ such that $R^{-2} < \min\left\{1,\frac1{4C}\right\}$, and then choose $0<\varepsilon<1$ such that $2R^{-2}+6\varepsilon<\frac1{2C}$. Let $\omega$ be the function supplied by ONSL for this $R$ and $\varepsilon$. Regard
\[
 T_n^{(R)}:=S_n^{(R)}+3\Id_{\ell^2(X)}.
\]
By Proposition~\ref{prop:matrix-obstruction}, we have
\[
 \|S_n^{(R)}\|\le\|S_n\|+\|S_n-S_n^{(R)}\|\le 2+R^{-2}\leq 3.
\]
Hence, $T_n^{(R)}\ge 0$ and $\ppg(T_n^{(R)})\le R$. Moreover,
\[
 \lambda_{\max}(S_n^{(R)})
 \ge\lambda_{\max}(S_n)-R^{-2}
 \ge C^{-1}-R^{-2},
\]
which implies that
\[
3+C^{-1}-R^{-2} \le  \lambda_{\max}(S_n^{(R)}) + 3 = \lambda_{\max}(T_n^{(R)}) = \|T_n^{(R)}\| \leq \|S_n^{(R)}\| + 3  \leq 6.
\]

Choose a density operator $D_n$ supplied by ONSL for $T_n^{(R)}$ such that \eqref{EQ:TD control} and \eqref{EQ:D condition} hold for $\varepsilon,R$ witnessed by $\omega$. Then
\begin{equation}\label{eq:onsl-lower}
 \Tr(S_n^{(R)}D_n) =\Tr(T_n^{(R)}D_n)-3 \ge(1-\varepsilon)\|T_n^{(R)}\|-3 \ge C^{-1}-R^{-2}-6\varepsilon.
\end{equation}
Define the unit vector $v_n \in \ell^2(X)$ by $v_n(x):=\sqrt{(D_n)_{xx}}$. Since $S_n\le M_n$ and $D_n\ge0$, we have $\Tr(S_nD_n)\le\Tr(M_nD_n)$. Hence, by the condition on $M_n$, we have
\begin{align}\label{EQ:upper control on trace}
 \Tr(S_nD_n) &\leq \Tr(M_nD_n)\le\sum_{x,y\in Y_n}(M_n)_{xy} \cdot |(D_n)_{yx}|\le\omega(r_n)\sum_{x,y\in Y_n}(M_n)_{xy}v_n(x)v_n(y) \notag \\
 &=\omega(r_n)\langle v_n,M_nv_n\rangle \le\omega(r_n).
\end{align}
Furthermore, $\left|\Tr\left((S_n^{(R)}-S_n)D_n\right)\right| \le\|S_n^{(R)}-S_n\|\cdot\|D_n\|_1 \leq R^{-2}$.
Combining this with \eqref{eq:onsl-lower} and \eqref{EQ:upper control on trace}, we obtain
\[
 \omega(r_n) \geq \Tr(S_nD_n) \geq \Tr(S_n^{(R)}D_n) - R^{-2} \geq  C^{-1}-2R^{-2}-6\varepsilon
 >\frac1{2C}.
\]
This contradicts $\lim_{n\to \infty} \omega(r_n) = 0$, which finishes the proof.
\end{proof}

The terminology of ONSL is motivated by the operator norm localisation property (ONL), equivalent to the following: A discrete metric space $(X,d)$ of bounded geometry has ONL if for any $R>0$ and any $\varepsilon \in (0,1)$, there exists $S>0$ such that for any positive operator $T\in\B(\ell^2(X))$ with $\ppg(T)\le R$, there is a unit vector $v\in\ell^2(X)$ with $\diam(\supp v) \leq S$ and $\langle Tv, v \rangle \geq (1-\varepsilon)\|T\|$.

Moreover, we have the following connection. Intuitively speaking, the norm of $T$ is detected by a localised vector state in ONL, whereas ONSL allows a general normal state represented by a density operator, with localisation replaced by decay of its normalised correlations. This explains the terminology.

\begin{lem}\label{lem:ONL and rank one}
A discrete metric space $(X,d)$ of bounded geometry has ONL \emph{if and only if} $X$ has ONSL such that each $D$ can be chosen of rank one.
\end{lem}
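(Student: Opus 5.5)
The two directions are almost formal, so I would argue each one directly.

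\emph{ONL implies rank-one ONSL.} Fix $R>0$ and $\varepsilon\in(0,1)$, and let $S>0$ be the constant supplied by ONL. Define
\[
\omega(s):=\begin{cases}1,& s\le S,\\ 0,& s>S,\end{cases}
\]
which is non-increasing with $\omega(s)\to0$. For a positive $T$ with $\ppg(T)\le R$, take the unit vector $v$ from ONL and set $D:=P_v$, the rank-one projection onto $\CC v$. Then $D\ge0$ and $\Tr(D)=1$. Moreover
\[
\Tr(TD)=\langle Tv,v\rangle\ge(1-\varepsilon)\|T\|,
\]
and $D_{xy}=v_x\overline{v_y}$. Hence $|D_{xy}|=\sqrt{D_{xx}D_{yy}}$ always. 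If $d(x,y)>S$, then $x$ and $y$ cannot both lie in $\supp v$, so $D_{xy}=0$. This verifies \eqref{EQ:D condition}.

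\emph{Rank-one ONSL implies ONL.} Given $R$ and $\varepsilon$, take $\omega$ from rank-one ONSL. Any rank-one density operator has the form $D=P_v$ for a unit vector $v$. Its entries satisfy $|D_{xy}|=|v_x||v_y|=\sqrt{D_{xx}D_{yy}}$. Condition \eqref{EQ:D condition} then gives $|v_x||v_y|\le\omega(d(x,y))|v_x||v_y|$. Whenever $x,y\in\supp v$, this forces $\omega(d(x,y))\ge1$, i.e.\ $\omega(d(x,y))=1$.

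Since $\omega\to0$, there exists $S$ such that $\omega(s)<1$ for all $s>S$. Therefore $d(x,y)\le S$ for all $x,y\in\supp v$, so $\diam(\supp v)\le S$. Finally $\langle Tv,v\rangle=\Tr(TD)\ge(1-\varepsilon)\|T\|$, which is exactly the ONL conclusion.

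The only subtle point is this last step. The rigidity $|D_{xy}|=\sqrt{D_{xx}D_{yy}}$ for rank-one $D$ turns the decay condition into a hard support bound. This uses that $\omega\le1$ and that $\omega$ eventually drops below $1$, both of which hold by the definition of ONSL. No other obstacle arises.
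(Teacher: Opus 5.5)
Your proposal is correct and follows the paper's proof essentially verbatim. In one direction you take $D=P_v$ with the step function $\omega=\chi_{[0,S]}$. In the other you use the rigidity $|D_{xy}|=\sqrt{D_{xx}D_{yy}}$ for rank-one $D$, which forces $\omega(d(x,y))=1$ on $\supp v$ and hence $\diam(\supp v)\le S$ once $\omega$ drops below $1$ past $S$.
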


\begin{proof}
Assume $X$ has ONL. Fixing $R>0$ and $\varepsilon \in (0,1)$, ONL provides $S>0$ such that for any $T \geq 0$ with $\ppg(T) \leq R$, there is a unit vector $v \in \ell^2(X)$ with $\diam(\supp v) \leq S$ and $\langle Tv,v\rangle \geq (1-\varepsilon)\|T\|$. Let $D=P_v$ be the rank-one projection in $\B(\ell^2(X))$ onto $\CC v$. Then $\Tr(TD) = \langle Tv,v\rangle \geq (1-\varepsilon) \|T\|$. On the other hand, define
\[
 \omega(t)=
 \begin{cases}
 1,&0\le t\le S,\\
 0,&t>S.
 \end{cases}
\]
Note that $D_{xy}=v_x\overline{v_y}$, which shows that $|D_{xy}|=\sqrt{D_{xx}D_{yy}}$. Then $D_{xx}D_{yy}\ne0$ implies $x,y\in\supp v$. In this case, we have $d(x,y)\le S$ and hence, $|D_{xy}|\le \omega(d(x,y))\sqrt{D_{xx}D_{yy}}$.

Conversely, fixing $R>0$ and $\varepsilon\in (0,1)$, let $\omega$ be supplied by ONSL. Choose $S>0$ with $\omega(S)<1$. For a positive $T$ with $\ppg(T)\le R$, denote the rank-one witness density operator by $D$. Since $\Tr(D)=1$ and $D$ is positive of rank one, $D=P_v$ the rank-one projection for some unit vector $v\in \ell^2(X)$. If $x,y\in\supp v$, then
\[
 \omega(d(x,y)) \geq  \frac{|D_{xy}|}{\sqrt{D_{xx}D_{yy}}}=1,
\]
which implies that $d(x,y) \leq S$. Hence, $\diam(\supp v) \leq S$, which proves ONL.
\end{proof}

%\subsection{Uniformly finite rank and property A}
Finally, we show a strengthened version of Lemma~\ref{lem:ONL and rank one}. Recall that $X$ has ONSL with uniformly finite rank if for each \(R>0\) and \(\varepsilon\in(0,1)\), the witnessing density operators $D$ in Definition~\ref{defn:ONSL} can be chosen with rank bounded by a constant \(k(R,\varepsilon)\).

\begin{prop}\label{prop:ONL and bounded rank}
A discrete metric space $(X,d)$ of bounded geometry has ONL \emph{if and only if} $X$ has ONSL with uniformly finite rank.
\end{prop}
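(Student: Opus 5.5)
The forward direction is immediate from Lemma~\ref{lem:ONL and rank one}: if $X$ has ONL, then $X$ has ONSL with every witness $D$ of rank one, so $k(R,\varepsilon)=1$ works. The content is the converse. Here I plan to pass from a bounded-rank density operator to a single localised unit vector by a pigeonhole and cut-off argument.

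Assume ONSL with rank bound $k=k(R,\varepsilon')$ and decay function $\omega$, where $\varepsilon'$ is a small parameter depending on $\varepsilon$, to be chosen below. Fix $0\le T$ with $\ppg(T)\le R$ and a witness $D$ of rank at most $k$. Write $D=\sum_{i=1}^k \lambda_i P_{u_i}$ with orthonormal $u_i$ and $\sum\lambda_i=1$. Then $\Tr(TD)=\sum_i\lambda_i\langle Tu_i,u_i\rangle\ge(1-\varepsilon')\|T\|$.

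The first step is to show that the diagonal weight $w(x):=D_{xx}$ clusters into at most $k$ regions of bounded diameter. Fix $S$ with $\omega(S)$ small. If $x_0,\dots,x_k$ are pairwise more than $S$ apart, the $(k+1)\times(k+1)$ compression of $D$ to these points has rank at most $k$. Its normalised form $\bigl(D_{x_ix_j}/\sqrt{D_{x_ix_i}D_{x_jx_j}}\bigr)$ is then identity plus off-diagonal entries of size at most $\omega(S)$. Once $k\,\omega(S)<1$ this matrix is invertible, so some $D_{x_ix_i}$ must vanish. Hence $\supp w$ contains no $S$-separated set of size $k+1$. A maximal $S$-separated subset of $\supp w$ then has at most $k$ points, so $\supp w\subseteq\bigcup_{j\le k}B(x_j,S)$.

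The second step is to decouple these balls. Group the balls into clusters: two balls are linked if their centres are within $2S+2R$ of each other, and we pass to connected components. This gives at most $k$ clusters $W_1,\dots,W_m$, each of diameter at most $k(4S+2R)$, and pairwise more than $2R$ apart. Iterating the choice of scale, replacing $S$ by $S_\ell=3S_{\ell-1}+2R$ about $k$ times, gives the required separation with all diameters bounded in terms of $R,\varepsilon,k$. Since every $u_i$ is supported on $\bigcup W_j$ (because $D_{xx}=\sum_i\lambda_i|u_i(x)|^2$), and $\chi_{W_j}T\chi_{W_l}=0$ for $j\ne l$, we get
\[
\Tr(TD)=\sum_j\Tr(\chi_{W_j}T\chi_{W_j}D)\le\sum_j\|\chi_{W_j}T\chi_{W_j}\|\,\Tr(\chi_{W_j}D).
\]
Since $\sum_j\Tr(\chi_{W_j}D)\le 1$, some $j$ satisfies $\|\chi_{W_j}T\chi_{W_j}\|\ge(1-\varepsilon')\|T\|$. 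Taking $v$ to be a near-top eigenvector of $\chi_{W_j}T\chi_{W_j}$ gives $\diam(\supp v)$ bounded uniformly and $\langle Tv,v\rangle\ge(1-\varepsilon)\|T\|$ for $\varepsilon'<\varepsilon$. This proves ONL.

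The main obstacle is the first step: the rank argument must be made quantitative and uniform. The entries are only \emph{small}, not zero, so the step relies on the Gershgorin-type invertibility of the normalised Gram matrix, which needs $k\,\omega(S)<1$. This is why $S$ must depend on $k$. The second step carries a further subtlety: the iterative scale choice must keep finitely many steps (at most $k$) so that the final diameter bound depends only on $R$ and $\varepsilon$.
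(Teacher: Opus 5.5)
Your proposal is correct and follows essentially the paper's route: the normalised Gram matrix is strictly diagonally dominant once $\omega(S)<1/k$, which shows that $\{x: D_{xx}>0\}$ contains no $k+1$ points pairwise more than $S$ apart; that set is then split into mutually $R$-separated pieces of uniformly bounded diameter, and $\Tr(TD)\le\max_j\|\chi_{W_j}T\chi_{W_j}\|$ picks out the piece carrying the norm. The only difference is the clustering step: the paper bounds $|Z|\le kN_X(S)$ and takes $R$-connected components of diameter at most $(kN_X(S)-1)R$, whereas you link the at most $k$ balls whose centres lie within $2S+2R$ of each other, and that single linking step already gives separation greater than $2R$, so the iterated rescaling $S_\ell=3S_{\ell-1}+2R$ is unnecessary.
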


\begin{proof}
Necessity follows directly from Lemma~\ref{lem:ONL and rank one}, so we only focus on sufficiency.
Fix $R>0$ and $\varepsilon \in (0,1)$. Applying ONSL with uniformly finite rank for $R$ and $\varepsilon$, we obtain $k \in \NN$ and a function $\omega$. Choose $S>0$ such that $\omega(S)<\frac1k$.

Let $T\in\B(\ell^2(X))$ be a positive operator with $\ppg(T)\le R$. Choose a density operator $D$ supplied by ONSL with rank at most $k$. Then there are vectors $v_x\in\CC^k$ such that $D_{xy}=\langle v_x,v_y\rangle_{\CC^k}$ for any $x,y\in X$. Set $Z:=\{x\in X:D_{xx}>0\}$, and define $u_x:=v_x/\|v_x\|$ for $x\in Z$. Then $\|u_x\|=1$ and we have
\[
 |\langle u_x,u_y\rangle|
 =\frac{|D_{xy}|}{\sqrt{D_{xx}D_{yy}}}
 \le\omega(d(x,y)).
\]
Claim: $Z$ cannot contain $k+1$ points whose pairwise distances are
greater than $S$.  Indeed, if $x_1,\ldots,x_{k+1}\in Z$ had this property,
then
\[
 |\langle u_{x_i},u_{x_j}\rangle| \leq  \omega(d(x_i,x_j)) \leq \omega(S) <\frac1k \quad \text{for} \quad i\neq j.
\]
Then the Gram matrix $G=\left(\langle u_{x_i},u_{x_j}\rangle\right)_{i,j=1}^{k+1}$ has diagonal entries equal to $1$, while in each row the sum of the absolute values of the off-diagonal entries is strictly less than $1$.  Hence $G$ is strictly diagonally dominant and therefore, invertible. This would make $u_{x_1},\ldots,u_{x_{k+1}}$ linearly independent in $\mathbb C^k$, which is a contradiction.

Choose a maximal family $z_1,\ldots,z_m\in Z$ with pairwise distances greater than $S$. The claim above gives $m\le k$, while maximality shows $Z\subseteq\bigcup_{j=1}^m B(z_j,S)$. By bounded geometry, we have
\begin{equation}\label{EQ:bdd geom}
 |Z|\le kN_X(S) <\infty.
\end{equation}
Since $D \geq 0$, we have $|D_{xy}|^2\le D_{xx}D_{yy}$. Hence $D_{xx}=0$ forces $D_{xy}=0$ for every $y$, which implies that $D= \chi_ZD\chi_Z$.

Decompose $Z=Z_1\sqcup\cdots\sqcup Z_q$ into its $R$-connected components, \emph{i.e.}, the connected components of the graph on \(Z\) where \(x\) and \(y\) are connected whenever \(d(x,y)\le R\). Then $d(Z_i,Z_j)>R$ for $i\neq j$.  Moreover, \eqref{EQ:bdd geom} shows that each $Z_i$ has at most $kN_X(S)$ points and hence,
\begin{equation}\label{EQ:diam of Zi}
 \diam(Z_i)\le (kN_X(S) -1)R.
\end{equation}
Let $T_i:=\chi_{Z_i}T\chi_{Z_i}$ and $D_i:=\chi_{Z_i}D\chi_{Z_i}$.
Since $\ppg(T)\le R$, we have $\chi_Z T \chi_Z = \sum_{i=1}^q T_i$. Then we obtain
\[
 \Tr(TD)=\Tr(\chi_ZT\chi_ZD\chi_Z) = \sum_{i=1}^q\Tr(T_iD_i).
\]
Since $T_i,D_i\ge0$, we have $\Tr(T_iD_i)\le\|T_i\|\Tr(D_i)$ and hence,
\[
 (1-\varepsilon)\|T\| \le\Tr(TD) \le\Big(\max_i\|T_i\|\Big)\sum_i\Tr(D_i) =\max_i\|T_i\|.
\]
Thus, there exists $i$ such that $\|T_i\|\ge(1-\varepsilon)\|T\|$.
Since $T_i$ is positive on the finite-dimensional space $\ell^2(Z_i)$, there
is a unit vector $v\in\ell^2(Z_i)$ such that
\[
 \langle Tv,v \rangle
 =\langle T_iv,v\rangle
 =\|T_i\|
 \ge(1-\varepsilon)\|T\|.
\]
By \eqref{EQ:diam of Zi}, we have $\diam(\supp v) \leq (kN_X(S) -1)R$, which concludes ONL.
\end{proof}

Combining with \cite[Theorem 4.1]{Sako14}, we obtain the following, which shows that the distinction between Property A and coarse
embeddability into Hilbert space can be detected by ranks of density operators used in ONSL.

\begin{cor}\label{cor:ONSL with bdd rank}
A discrete metric space of bounded geometry has Property A \emph{if and only if} it has ONSL with uniformly finite rank.
\end{cor}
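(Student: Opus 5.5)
The plan is to combine Proposition~\ref{prop:ONL and bounded rank} with Sako's theorem \cite[Theorem 4.1]{Sako14}, which states that for discrete metric spaces of bounded geometry, ONL is equivalent to Property A. There is essentially no new analysis to do. The corollary is a chain of two equivalences, and the only thing to check is that the hypotheses match at each link.

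First, I would confirm that Sako's ONL agrees with the formulation recalled just before Lemma~\ref{lem:ONL and rank one}. That formulation uses positive operators of propagation at most $R$, unit vectors with support of diameter at most $S$, and the bound $\langle Tv,v\rangle\ge(1-\varepsilon)\|T\|$. The original definition in \cite{CTWY08} is stated for arbitrary finite-propagation operators $T$ with $\|Tv\|\ge(1-\varepsilon)\|T\|$. To pass from it to the positive form, I would apply the original ONL to $T^*T$, or argue with $T^{1/2}$-type estimates. The converse direction is standard. The paper already treats the positive version as equivalent to ONL, so I will take this as given, with a brief remark if needed.

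Second, for a discrete metric space $X$ of bounded geometry, \cite[Theorem 4.1]{Sako14} gives that Property A holds if and only if ONL holds. Proposition~\ref{prop:ONL and bounded rank} gives that ONL holds if and only if ONSL with uniformly finite rank holds. Composing these two equivalences proves the corollary.

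The only point that could be called an obstacle is the first step, matching the paper's version of ONL with the one in Sako's theorem. Since the paper explicitly presents the positive-operator formulation as equivalent to ONL, this is bookkeeping rather than a real difficulty.
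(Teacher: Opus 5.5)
Your proposal is correct and is exactly the paper's argument: the corollary follows by composing Proposition~\ref{prop:ONL and bounded rank} (ONL $\Leftrightarrow$ ONSL with uniformly finite rank) with Sako's equivalence of ONL and Property A \cite[Theorem 4.1]{Sako14}. Your check that the positive-operator form of ONL matches the original one is harmless bookkeeping that the paper takes for granted. For positive $T$, the bound $\|Tv\|\le\|T\|^{1/2}\|T^{1/2}v\|$ turns $\|Tv\|\ge c\|T\|$ into $\langle Tv,v\rangle\ge c^2\|T\|$, and applying the positive form to $T^*T$ gives the converse.
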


\bibliographystyle{plain}
\bibliography{reference_CE_submission}

\end{document}